\newcommand{\tun}{\begin{picture}(5,0)(-2,-1)
\put(0,0){\circle*{2}}
\end{picture}}
\newcommand{\tdeux}{\begin{picture}(7,7)(0,-1)
\put(3,0){\circle*{2}}
\put(3,0){\line(0,1){5}}
\put(3,5){\circle*{2}}
\end{picture}}
\newcommand{\ttroisun}{\begin{picture}(15,8)(-5,-1)
\put(3,0){\circle*{2}}
\put(-0.65,0){$\vee$}
\put(6,7){\circle*{2}}
\put(0,7){\circle*{2}}
\end{picture}}
\newcommand{\ttroisdeux}{\begin{picture}(5,12)(-2,-1)
\put(0,0){\circle*{2}}
\put(0,0){\line(0,1){5}}
\put(0,5){\circle*{2}}
\put(0,5){\line(0,1){5}}
\put(0,10){\circle*{2}}
\end{picture}}
\newcommand{\tquatreun}{\begin{picture}(15,12)(-5,-1)
\put(3,0){\circle*{2}}
\put(-0.65,0){$\vee$}
\put(6,7){\circle*{2}}
\put(0,7){\circle*{2}}
\put(3,7){\circle*{2}}
\put(3,0){\line(0,1){7}}
\end{picture}}
\newcommand{\tquatredeux}{\begin{picture}(15,18)(-5,-1)
\put(3,0){\circle*{2}}
\put(-0.65,0){$\vee$}
\put(6,7){\circle*{2}}
\put(0,7){\circle*{2}}
\put(0,14){\circle*{2}}
\put(0,7){\line(0,1){7}}
\end{picture}}
\newcommand{\tquatretrois}{\begin{picture}(15,18)(-5,-1)
\put(3,0){\circle*{2}}
\put(-0.65,0){$\vee$}
\put(6,7){\circle*{2}}
\put(0,7){\circle*{2}}
\put(6,14){\circle*{2}}
\put(6,7){\line(0,1){7}}
\end{picture}}
\newcommand{\tquatrequatre}{\begin{picture}(15,18)(-5,-1)
\put(3,5){\circle*{2}}
\put(-0.65,5){$\vee$}
\put(6,12){\circle*{2}}
\put(0,12){\circle*{2}}
\put(3,0){\circle*{2}}
\put(3,0){\line(0,1){5}}
\end{picture}}
\newcommand{\tquatrecinq}{\begin{picture}(9,19)(-2,-1)
\put(0,0){\circle*{2}}
\put(0,0){\line(0,1){5}}
\put(0,5){\circle*{2}}
\put(0,5){\line(0,1){5}}
\put(0,10){\circle*{2}}
\put(0,10){\line(0,1){5}}
\put(0,15){\circle*{2}}
\end{picture}}
\newcommand{\tcinqsept}{\begin{picture}(15,8)(-5,-1)
\put(3,0){\circle*{2}}
\put(-0.65,0){$\vee$}
\put(6,7){\circle*{2}}
\put(0,7){\circle*{2}}
\put(2.35,7){$\vee$}
\put(3,14){\circle*{2}}
\put(9,14){\circle*{2}}
\end{picture}}
\newcommand{\tcinqneuf}{\begin{picture}(15,26)(-5,-1)
\put(3,0){\circle*{2}}
\put(-0.65,0){$\vee$}
\put(6,7){\circle*{2}}
\put(0,7){\circle*{2}}
\put(6,14){\circle*{2}}
\put(6,7){\line(0,1){7}}
\put(6,21){\circle*{2}}
\put(6,14){\line(0,1){7}}
\end{picture}}
\newcommand{\tcinqdix}{\begin{picture}(15,19)(-5,-1)
\put(3,5){\circle*{2}}
\put(-0.5,5){$\vee$}
\put(6,12){\circle*{2}}
\put(0,12){\circle*{2}}
\put(3,0){\circle*{2}}
\put(3,0){\line(0,1){12}}
\put(3,12){\circle*{2}}
\end{picture}}
\newcommand{\tcinqonze}{\begin{picture}(15,26)(-5,-1)
\put(3,5){\circle*{2}}
\put(-0.65,5){$\vee$}
\put(6,12){\circle*{2}}
\put(0,12){\circle*{2}}
\put(3,0){\circle*{2}}
\put(3,0){\line(0,1){5}}
\put(0,12){\line(0,1){7}}
\put(0,19){\circle*{2}}
\end{picture}}
\newcommand{\tcinqdouze}{\begin{picture}(15,26)(-5,-1)
\put(3,5){\circle*{2}}
\put(-0.65,5){$\vee$}
\put(6,12){\circle*{2}}
\put(0,12){\circle*{2}}
\put(3,0){\circle*{2}}
\put(3,0){\line(0,1){5}}
\put(6,12){\line(0,1){7}}
\put(6,19){\circle*{2}}
\end{picture}}
\newcommand{\tcinqtreize}{\begin{picture}(5,26)(-2,-1)
\put(0,0){\circle*{2}}
\put(0,0){\line(0,1){7}}
\put(0,7){\circle*{2}}
\put(0,7){\line(0,1){7}}
\put(0,14){\circle*{2}}
\put(-3.65,14){$\vee$}
\put(-3,21){\circle*{2}}
\put(3,21){\circle*{2}}
\end{picture}}
\newcommand{\tcinqquatorze}{\begin{picture}(9,26)(-5,-1)
\put(0,0){\circle*{2}}
\put(0,0){\line(0,1){5}}
\put(0,5){\circle*{2}}
\put(0,5){\line(0,1){5}}
\put(0,10){\circle*{2}}
\put(0,10){\line(0,1){5}}
\put(0,15){\circle*{2}}
\put(0,15){\line(0,1){5}}
\put(0,20){\circle*{2}}
\end{picture}}
\newcommand{\tdun}[1]{\begin{picture}(10,5)(-2,-1)
\put(0,0){\circle*{2}}
\put(3,-2){\tiny #1}
\end{picture}}
\newcommand{\tddeux}[2]{\begin{picture}(12,5)(0,-1)
\put(3,0){\circle*{2}}
\put(3,0){\line(0,1){5}}
\put(3,5){\circle*{2}}
\put(6,-2){\tiny #1}
\put(6,3){\tiny #2}
\end{picture}}
\newcommand{\tdtroisun}[3]{\begin{picture}(20,12)(-5,-1)
\put(3,0){\circle*{2}}
\put(-0.65,0){$\vee$}
\put(6,7){\circle*{2}}
\put(0,7){\circle*{2}}
\put(5,-2){\tiny #1}
\put(9,5){\tiny #2}
\put(-5,5){\tiny #3}
\end{picture}}
\newcommand{\tdtroisdeux}[3]{\begin{picture}(12,12)(-2,-1)
\put(0,0){\circle*{2}}
\put(0,0){\line(0,1){5}}
\put(0,5){\circle*{2}}
\put(0,5){\line(0,1){5}}
\put(0,10){\circle*{2}}
\put(3,-2){\tiny #1}
\put(3,3){\tiny #2}
\put(3,9){\tiny #3}
\end{picture}}
\newcommand{\tdquatreun}[4]{\begin{picture}(20,12)(-5,-1)
\put(3,0){\circle*{2}}
\put(-0.6,0){$\vee$}
\put(6,7){\circle*{2}}
\put(0,7){\circle*{2}}
\put(3,7){\circle*{2}}
\put(3,0){\line(0,1){7}}
\put(5,-2){\tiny #1}
\put(8.5,5){\tiny #2}
\put(1,10){\tiny #3}
\put(-5,5){\tiny #4}
\end{picture}}
\newcommand{\tdquatredeux}[4]{\begin{picture}(20,20)(-5,-1)
\put(3,0){\circle*{2}}
\put(-.65,0){$\vee$}
\put(6,7){\circle*{2}}
\put(0,7){\circle*{2}}
\put(0,14){\circle*{2}}
\put(0,7){\line(0,1){7}}
\put(5,-2){\tiny #1}
\put(9,5){\tiny #2}
\put(-5,5){\tiny #3}
\put(-5,12){\tiny #4}
\end{picture}}
\newcommand{\tdquatrequatre}[4]{\begin{picture}(20,14)(-5,-1)
\put(3,5){\circle*{2}}
\put(-.65,5){$\vee$}
\put(6,12){\circle*{2}}
\put(0,12){\circle*{2}}
\put(3,0){\circle*{2}}
\put(3,0){\line(0,1){5}}
\put(6,-3){\tiny #1}
\put(6,4){\tiny #2}
\put(9,12){\tiny #3}
\put(-5,12){\tiny #4}
\end{picture}}
\newcommand{\D}{\mathcal{D}}
\newcommand{\h}{\mathbf{H}}
\newcommand{\F}{\mathcal{F}}
\renewcommand{\S}{\mathfrak{S}}
\newcommand{\FF}{\mathbb{F}}
\newcommand{\GG}{\mathbb{G}}
\newcommand{\HH}{\mathbb{H}}
\newcommand{\tdelta}{\tilde{\Delta}}
\renewcommand{\vec}[1]{\boldsymbol{#1}}
\newcommand{\FQSym}{\mathbf{FQSym}}
\newcommand{\PQSym}{\mathbf{PQSym}}
\renewcommand{\P}{\mathbb{P}}
\newcommand{\mmodels}{\mid \hspace{-.2mm} \models}
\newtheorem{defi}{\indent Definition}
\newtheorem{lemma}[defi]{\indent Lemma}
\newtheorem{cor}[defi]{\indent Corollary}
\newtheorem{theo}[defi]{\indent Theorem}
\newtheorem{prop}[defi]{\indent Proposition}
\newenvironment{proof}{{\bf Proof.}}{\hfill $\Box$}
\title{Ordered forests and parking functions}
\author{L. Foissy \\
\\
{\small{\it Laboratoire de Mathématiques, Université de Reims}}\\
\small{{\it Moulin de la Housse - BP 1039 - 51687 REIMS Cedex 2, France}}\\
\small{e-mail : loic.foissy@univ-reims.fr}}
\date{}
\begin{document}

\maketitle

ABSTRACT. We prove that the Hopf algebra of parking functions and the Hopf algebra of ordered forests are isomorphic,
using a rigidity theorem for a particular type of bialgebras.\\

KEYWORDS. Hopf algebra of ordered forests; Hopf algebra of parking functions; dendriform coalgebras; duplicial algebras.\\

AMS CLASSIFICATION. 05C05, 16W30.

\tableofcontents

\section*{Introduction}

The Connes-Kreimer Hopf algebra of rooted trees is described in \cite{Connes}, in a context of Quantum Fields Theory: 
it is used to treat the Renormalization procedure. This Hopf algebra is generated by the set of rooted trees, and its coproduct is given by admissible cuts.
Other Hopf algebras of trees are obtained from this one by giving additional structures to the rooted trees. For example, adding planar datas,
one obtains the Hopf algebra of planar trees $\h_p$ and its decorated versions $\h_p^\D$ \cite{Foissy3,Holtkamp};
adding a total order on the vertices, one obtains the Hopf algebra of ordered forests $\h_o$ and its Hopf subalgebra $\h_{ho}$,
generated by heap-ordered forests (that is to say that the total order of the vertices is compatible with the oriented graph structure of the forests).
These two Hopf algebras appeared in \cite{FoissyUnt} in a probabilistic context, in order to define rough paths.
The main point of the construction is a Hopf algebra morphism $\Theta$ from $\h_o$ to the Hopf algebra of free quasi-symmetric functions $\FQSym$
\cite{Duchamp,Malvenuto}, also known as the Malvenuto-Reutenauer Hopf algebra of permutations; the restriction of $\Theta$ to $\h_{ho}$
is an isomorphism of Hopf algebras.\\

Our aim here is an algebraic study of $\h_o$. In particular, $\h_o$ and the Hopf algebra of parking functions $\PQSym$ \cite{Novelli2,Novelli1}
have the same Poincaré-Hilbert series: we prove here that they are isomorphic.
We also combinatorially define a symmetric Hopf pairing on $\h_o$, mimicking the Hopf pairing on the non-commutative Connes-Kreimer Hopf algebras.
It turns out that this pairing is degenerate, and its kernel is the kernel of the Hopf algebra morphism $\Theta$. Moreover,
$\Theta$ induces an isometry  from $\h_o/Ker(\Theta)$ or from $\h_{ho}$ to $\FQSym$.

In order to prove the isomorphism of $\h_o$ and $\PQSym$, we introduce the notion of $Dup$-$Dend$ bialgebra.
A duplicial algebra is an algebra with two associative (non unitary) products $m$ and $\nwarrow$, such that $(xy)\nwarrow z=x(y \nwarrow z)$
for all $x,y,z$. This type of algebra, studied in \cite{Loday2}, naturally appears in Quantum Electrodynamics, see \cite{Frabetti2,Frabetti1}.
A dendriform coalgebra, notion dual to the notion of dendriform algebra \cite{Loday,Ronco} is a coassociative (non counitary) coalgebra $C$, 
whose coproduct $\tdelta$ can be written $\Delta_\prec+\Delta_\succ$, such that $(C,\Delta_\succ,\Delta_\prec)$ is a bicomodule over $C$.
A $Dup$-$Dend$ bialgebra is both a duplicial algebra and a dendriform coalgebra, with compatibilities between the two products
and the two coproducts given by equations (\ref{E3}) and (\ref{E4}) of this text. We here prove that the augmentation ideals of $\h_o$, $\PQSym$ and $\h_p^\D$
are $Dup$-$Dend$ bialgebras; moreover, the augmentation ideals of $\h_{ho}$ and $\FQSym$ are sub-$Dup$-$Dend$ bialgebras
of respectively $\h_o$ and $\PQSym$, and $\Theta$ is a morphism of $Dup$-$Dend$ bialgebras.

We then observe that $\h_p^\D$ is the free duplicial algebra generated by the set $\D$. We finally prove a rigidity theorem {\it à la Loday},
which says that any graded, connected $Dup$-$Dend$ bialgebra is a free duplicial algebra, so is isomorphic
to a $\h_p^\D$ as a Hopf algebra. Manipulating formal series, we deduce that $\h_o$ and $\PQSym$ are isomorphic to the same $\h_p^\D$,
and are therefore isomorphic.\\

This paper is organised as follows: the two first sections are dedicated to reminders on respectively the Hopf algebras of trees,
and the Hopf algebras of permutations and parking functions,  $\FQSym$ and $\PQSym$.
The pairing on $\h_o$ is introduced and studied in the third section and the last part of the text deals with $Dup$-$Dend$ bialgebras,
the rigidity theorem and the existence of an isomorphism between $\h_o$ and $\PQSym$.\\

{\bf Notations}. \begin{enumerate}
\item $K$ is a commutative field. Any vector space, algebra, coalgebra,\ldots of this text will be taken over $K$.
\item Let $A=(A,m,\Delta,1,\varepsilon,S)$ be a Hopf algebra. The augmentation ideal $Ker(\varepsilon)$ of $A$ will be denoted by $A_+$.
We give $A_+$ a coassociative, but not counitary, coproduct $\tdelta$ defined by $\tdelta(x)=\Delta(x)-x \otimes 1-1\otimes x$ for all $x \in A_+$.
\end{enumerate}

\section{Four Hopf algebras of forests}

\subsection{The Connes-Kreimer Hopf algebra of rooted trees}

We briefly recall the construction of the Connes-Kreimer Hopf algebra of rooted trees \cite{Connes}.
A {\it rooted tree} is a finite tree with a distinguished vertex called the {\it root} \cite{Stanley}. 
A {\it rooted forest} is a finite graph $\FF$ such that any connected component of $\FF$ is a rooted tree.
The set of vertices of the rooted forest $\FF$ is denoted by $V(\FF)$.
The {\it degree} of a forest $\FF$ is the number of its vertices. The set of rooted forests of degree $n$ will be denoted by $\F(n)$.\\

For example:
\begin{eqnarray*}
\F(0)&=&\{1\},\\
\F(1)&=&\{\tun\},\\
\F(2)&=&\{\tun\tun,\tdeux\},\\
\F(3)&=&\{\tun\tun\tun,\tdeux\tun,\ttroisun,\ttroisdeux\},\\
\F(4)&=&\{\tun\tun\tun\tun,\tdeux\tun\tun,\tdeux\tdeux,\ttroisun\tun,\ttroisdeux\tun,\tquatreun,\tquatredeux,\tquatrequatre,\tquatrecinq\}.
\end{eqnarray*}

Let $\FF$ be a rooted forest. The edges of $\FF$ are oriented downwards (from the leaves to the roots). If $v,w \in V(\FF)$, we shall denote by
$v \rightarrow w$ if there is an edge in $\FF$ from $v$ to $w$ and $v \twoheadrightarrow w$ if there is an oriented path from $v$ to $w$ in $\FF$.
By convention, $v \twoheadrightarrow v$ for any $v \in V(\FF)$.

Let $\vec{v}$ be a subset of $V(\FF)$. We shall say that $\vec{v}$ is an admissible cut of $\FF$, 
and we shall write $\vec{v} \models V(\FF)$, if $\vec{v}$ is totally disconnected, that is to say that
$v \twoheadrightarrow w \hspace{-.7cm} / \hspace{.7cm}$ for any couple $(v,w)$ of two different elements of $\vec{v}$.
If $\vec{v} \models V(\FF)$, we denote by $Lea_{\vec{v}}\FF$ the rooted sub-forest of $\FF$ obtained by keeping
only the vertices above $\vec{v}$, that is to say $\{ w \in V(\FF), \: \exists v \in \vec{v}, \:w \twoheadrightarrow v \}$.
Note that $\vec{v} \subseteq Lea_{\vec{v}}\FF$. We denote by $Roo_{\vec{v}}\FF$ the rooted sub-forest obtained by keeping the other vertices.

In particular, if $\vec{v}=\emptyset$, then $Lea_{\vec{v}}\FF=1$ and $Roo_{\vec{v}}\FF=\FF$: this is the {\it empty cut} of $\FF$.
If $\vec{v}$ contains all the roots of $\FF$, then it contains only the roots of $\FF$, $Lea_{\vec{v}}\FF=\FF$ and
$Roo_{\vec{v}}\FF=1$: this is the {\it total cut} of $\FF$. 
We shall write $\vec{v} \mmodels V(\F)$ if $\vec{v}$ is an non-total, non-empty admissible cut of $\FF$.\\

Connes and Kreimer proved in \cite{Connes} that the vector space $\h$ generated by the set of rooted forests is a Hopf algebra. Its product 
is given by the disjoint union of rooted forests, and the coproduct is defined for any rooted forest $\FF$ by:
$$\Delta(\FF)=\sum_{\vec{v} \models V(\FF)} Lea_{\vec{v}}\FF \otimes Roo_{\vec{v}}\FF
=\FF \otimes 1+1\otimes \FF+\sum_{\vec{v} \mmodels V(\FF)} Lea_{\vec{v}}\FF \otimes Roo_{\vec{v}}\FF.$$
For example:
$$\Delta\left(\tquatredeux\right)=\tquatredeux \otimes 1+1\otimes \tquatredeux+
\tun \otimes \ttroisun+\tdeux \otimes \tdeux+\tun \otimes \ttroisdeux+\tun\tun \otimes \tdeux+\tdeux \tun \otimes \tun.$$

The following coefficients will appear in corollary \ref{12}:

\begin{defi}
\cite{Brouder,Foissy3,Hoffman,Zhao}. Let $\FF$ be a rooted forest. The coefficient $\FF!$ is the integer defined by:
$$\FF!=\prod_{v \in V(\FF)} |\{w \in V(\FF)\:\mid \:w \twoheadrightarrow v\}|.$$
\end{defi}

Typical examples are given by:
$$\begin{array}{c|c|c|c|c|c|c|c|c|c|c|c|c|c|c|c}
\FF&\tun&\tun\tun&\tdeux&\tun\tun\tun&\tdeux\tun&\ttroisun&\ttroisdeux&\tun\tun\tun&
\tdeux\tun\tun&\ttroisun\tun&\ttroisdeux\tun&\tquatreun&\tquatredeux&\tquatrequatre&\tquatrecinq\\
\hline \FF!&1&1&2&1&2&3&6&1&2&3&6&4&8&12&24
\end{array}$$

\subsection{Hopf algebras of planar decorated trees}

We now recall the construction of the non-commutative generalisation of the Connes-Kreimer Hopf algebra \cite{Foissy3,Holtkamp}.\\

A {\it planar forest} is a rooted forest $\FF$ such that the set of the roots of $\FF$ is totally ordered and, for any vertex $v \in V(\FF)$, the set 
$\{w \in V(\FF)\:\mid \:w \rightarrow v\}$ is totally ordered. The set of planar forests of degree $n$ will be denoted by $\F_p(n)$ for all $n \geq 1$.\\

Planar forests are represented in such a manner that the total orders on the set of roots and the sets $\{w \in V(\FF)\:\mid \:w \rightarrow v\}$ for any $v \in V(\FF)$
are given from left to right. For example:
\begin{eqnarray*}
\F_p(0)&=&\{1\},\\
\F_p(1)&=&\{\tun\},\\
\F_p(2)&=&\{\tun\tun,\tdeux\},\\
\F_p(3)&=&\{\tun\tun\tun,\tdeux\tun,\tun\tdeux,\ttroisun,\ttroisdeux\},\\
\F_p(4)&=&\{\tun\tun\tun\tun,\tdeux\tun\tun,\tun\tdeux\tun,\tun\tun\tdeux,\tdeux\tdeux,
\ttroisun\tun,\ttroisdeux\tun,\tun\ttroisun,\tun\ttroisdeux,\tquatreun,\tquatredeux,\tquatretrois,\tquatrequatre,\tquatrecinq\}.
\end{eqnarray*}

If $\vec{v} \models V(\FF)$, then $Lea_{\vec{v}}\FF$ and $Roo_{\vec{v}}\FF$ are naturally planar forests. It is proved in \cite{Foissy3} that the space $\h_p$
generated by planar forests is a bialgebra. Its product is given by the concatenation of planar forests and its coproduct  is defined for any rooted forest $\FF$ by:
$$\Delta(\FF)=\sum_{\vec{v} \models V(\FF)} Lea_{\vec{v}}\FF \otimes Roo_{\vec{v}}\FF
=\FF \otimes 1+1\otimes \FF+\sum_{\vec{v} \mmodels V(\FF)} Lea_{\vec{v}}\FF \otimes Roo_{\vec{v}}\FF.$$
For example:
\begin{eqnarray*}
\Delta\left(\tquatredeux\right)&=&\tquatredeux \otimes 1+1\otimes \tquatredeux+
\tun \otimes \ttroisun+\tdeux \otimes \tdeux+\tun \otimes \ttroisdeux+\tun\tun \otimes \tdeux+\tdeux \tun \otimes \tun,\\
\Delta\left(\tquatretrois\right)&=&\tquatretrois \otimes 1+1\otimes \tquatretrois+
\tun \otimes \ttroisun+\tdeux \otimes \tdeux+\tun \otimes \ttroisdeux+\tun\tun \otimes \tdeux+\tun\tdeux \otimes \tun.
\end{eqnarray*}

We shall need decorated versions of this Hopf algebra. If $\D$ is any non-empty set, a {\it decorated planar forest} is a couple $(\FF,d)$,
where $\FF$ is a planar forest and $d:V(\FF) \longrightarrow \D$ is any map. The algebra of decorated planar forest $\h_p^\D$ is also a Hopf algebra.
Moreover, if $\D$ is a graded set, that is to say $\D$ is decomposed as $\displaystyle \D=\bigsqcup_{n\in \mathbb{N}} \D(n)$,
$\h_p^\D$ is naturally graded, the degree of a decorated planar forest $\D$ being the sum of the degrees of the decorations of the vertices of $\FF$.
If $\D_0=\emptyset$, the graded Hopf algebra $\h_p^\D$ is connected, and, if we define the Poincaré-Hilbert formal series:
$$f_\D(x)=\sum_{n=1}^\infty |\D(n)|x^n,\hspace{.5cm} f_{\h_p^\D}(x)=\sum_{n=0}^\infty dim\left(\h_p^\D(n)\right)x^n,$$
then:
$$f_{\h_p^\D}(x)=\frac{1-\sqrt{1-4f_\D(x)}}{2f_\D(x)},\hspace{.5cm} f_\D(x)=\frac{f_{\h_p^\D}(x)-1}{f_{\h_p^\D}(x)^2}.$$

Let us consider the graded dual of $\h_p$. The dual basis of the basis of forests is denoted by $(Z_\FF)_{\FF\in \F_p}$.
The product of two elements $Z_\FF$ and $Z_\GG$ is the sum of elements $Z_\HH$, where $\HH$ is obtained by grafting $\FF$ on $\GG$. For example:
$$Z_{\tun}Z_{\tdeux}=Z_{\tun\tdeux}+Z_{\ttroisun}+Z_{\ttroisdeux}+Z_{\ttroisun}+Z_{\tdeux\tun}=Z_{\tun\tdeux}+Z_{\tdeux\tun}+2Z_{\ttroisun}+Z_{\ttroisdeux}.$$
This product can be split into two non-associative products $\prec$ and $\succ$, such that, for all $x,y,z \in (\h_p^*)_+$:
$$\left\{\begin{array}{rcl}
(x\prec y)\prec z&=&x\prec( yz),\\
(x \succ y)\prec z&=&x \succ (y \prec z),\\
(xy)\succ z&=&x \succ (y\succ z).
\end{array}\right.$$
In other words, $(\h_p^*)_+$ is a dendriform algebra \cite{Loday,Ronco}. It is proved in \cite{Foissy2} that $(\h_p^*)_+$ is freely generated by $Z_{\tun}$, 
as a dendriform algebra. For example:
$$Z_{\tun}\prec Z_{\tdeux}=Z_{\tdeux\tun},\hspace{.5cm} Z_{\tun}\succ Z_{\tdeux}=Z_{\tun\tdeux}+2Z_{\ttroisun}+Z_{\ttroisdeux}.$$

More generally, the graded dual $(\h_p^\D)^*_+$ is the free dendriform algebra generated by the elements $Z_{\tdun{d}}$, $d\in \D$.\\

{\bf Remark.} The dual coproducts of $\prec$ and $\succ$ on $(\h_p^\D)_+$ are not the coproducts $\tdelta_\prec$ and $\tdelta_\succ$ 
introduced in section \ref{s4.1} of this text.

\subsection{Hopf algebra of ordered trees}

\begin{defi} 
An \textnormal{ordered (rooted) forest} is a rooted forest with a total order on the set of its vertices. The set of ordered forests will be denoted by $\F_o$;
for all $n \geq 0$, the set of ordered forests with $n$ vertices will be denoted by $\F_o(n)$. The $K$-vector space generated by $\F_o$ is denoted by
$\h_o$. It is a graded subspace, the homogeneous component of degree $n$ being $Vect(\F_o(n))$ for all $n \in \mathbb{N}$.
\end{defi}

{\bf Examples.}
\begin{eqnarray*}
\F_o(0)&=&\{1\},\\
\F_o(1)&=&\{\tdun{1}\},\\
\F_o(2)&=&\{\tdun{1}\tdun{2},\tddeux{1}{2},\tddeux{2}{1}\},\\
\F_o(3)&=&\left\{\tdun{1}\tdun{2}\tdun{3},
\tdun{1}\tddeux{2}{3},\tdun{1}\tddeux{3}{2},\tddeux{1}{3}\tdun{2},\tdun{2}\tddeux{3}{1},\tddeux{1}{2}\tdun{3},\tddeux{2}{1}\tdun{3},
\tdtroisun{1}{3}{2},\tdtroisun{2}{3}{1},\tdtroisun{3}{2}{1},
\tdtroisdeux{1}{2}{3},\tdtroisdeux{1}{3}{2},\tdtroisdeux{2}{1}{3},\tdtroisdeux{2}{3}{1},\tdtroisdeux{3}{1}{2},\tdtroisdeux{3}{2}{1}\right\}.
\end{eqnarray*}

{\bf Remarks.} \begin{enumerate}
\item Note that an ordered forest is also planar, by restriction of the total order to the subsets of vertices formed by the roots 
or $\{w \in V(\FF)\:\mid \:w \rightarrow v\}$.
\item We shall often identify the set $V(\FF)$ of an ordered forest $\FF$ of degree $n$ with the set $\{1,\ldots,n\}$, using the unique increasing bijection
from $V(\FF)$ to $\{1,\ldots,n\}$. 
\end{enumerate}

If $\FF$ and $\GG$ are two ordered forests, then the rooted forest $\mathbb{FG}$ is also an ordered forest with, 
for all $v \in V(\FF)$, $w \in V(\GG)$, $v<w$. This defines a non-commutative product on the set of ordered forests. 
For example, the product of $\tdun{1}$ and $\tddeux{1}{2}$ gives $\tdun{1}\tddeux{2}{3}$, whereas the product of $\tddeux{1}{2}$ and $\tdun{1}$
gives $\tddeux{1}{2}\tdun{3}=\tdun{3}\tddeux{1}{2}$. This product is linearly extended to $\h_o$, which in this way becomes a graded algebra.\\

If $\FF$ is an ordered forest, then any subforest of $\FF$ is also ordered. 
So we can define a coproduct $\Delta:\h_o \longrightarrow \h_o\otimes \h_o$ on $\h_o$ in the following way: for all $\FF \in \F_o$,
$$\Delta(\FF)=\sum_{\vec{v} \models V(\FF)} Lea_{\vec{v}}\FF\otimes Roo_{\vec{v}}\FF.$$
For example:
$$\Delta\left(\tdquatredeux{2}{3}{4}{1}\right)=\tdquatredeux{2}{3}{4}{1} \otimes 1+1\otimes \tdquatredeux{2}{3}{4}{1}
+\tdun{1} \otimes \tdtroisun{1}{3}{2}+\tddeux{2}{1} \otimes \tddeux{1}{2}+\tdun{1} \otimes \tdtroisdeux{2}{3}{1}
+\tdun{1}\tdun{2} \otimes \tddeux{1}{2}+\tddeux{3}{1}\tdun{2} \otimes \tdun{1}.$$

{\bf Remark.} This is the coopposite of the coproduct defined in \cite{FoissyUnt}. This observation will make the redaction of section 4 easier.
Note that $\h_o$ is isomorphic to $\h_o^{op}$, via the reversing of the orders on the vertices of each ordered forest; moreover, $\h_o$ is isomorphic
to $\h_o^{op,cop}$ via the antipode; so $\h_o$ is isomorphic to $\h_o^{cop}$.\\

The number of ordered forests of degree $n$ is $(n+1)^{n-1}$, see sequence A000272 of \cite{Sloane}. Hence, we have proved:

\begin{prop}
The Poincaré-Hilbert formal series of $\h_o$ is $\displaystyle f_{\h_o}(x)=\sum_{n=0}^\infty (n+1)^{n-1}x^n$.
\end{prop}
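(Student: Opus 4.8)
The plan is to reduce the statement to a classical labelled-tree count. By definition the homogeneous component of degree $n$ is $\h_o(n)=Vect(\F_o(n))$, and the ordered forests of degree $n$ form a basis of this space; hence $\dim(\h_o(n))=|\F_o(n)|$, and the asserted formula for $f_{\h_o}(x)$ is equivalent to the equality $|\F_o(n)|=(n+1)^{n-1}$ for all $n$. So everything comes down to enumerating $\F_o(n)$.

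First I would invoke the second Remark following the definition of ordered forests: via the unique increasing bijection $V(\FF)\to\{1,\dots,n\}$, giving an ordered forest of degree $n$ is exactly the same as giving a rooted forest whose vertex set is $\{1,\dots,n\}$. Because the order is total, this identification is a genuine bijection: an ordered forest has no order-preserving automorphism other than the identity, so no further quotient by symmetries is needed. Thus $|\F_o(n)|$ equals the number of rooted forests on the labelled vertex set $\{1,\dots,n\}$.

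Next I would count labelled rooted forests by the standard device of adjoining a root. Given a rooted forest on $\{1,\dots,n\}$, add a new vertex $0$ together with an edge from $0$ to each root of the forest; the result is a tree on $\{0,1,\dots,n\}$, and rooting this tree at $0$ recovers the original forest. This sets up a bijection between rooted forests on $n$ labelled vertices and trees on the $n+1$ labelled vertices $\{0,1,\dots,n\}$. By Cayley's formula there are exactly $(n+1)^{n-1}$ such trees, whence $|\F_o(n)|=(n+1)^{n-1}$, and the formula for the Poincar\'e--Hilbert series follows at once.

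There is no serious obstacle here: the substance of the argument is the classical enumeration recorded as sequence A000272. The only point that deserves a line of care is the first reduction, namely checking that passing to the increasing labelling really yields a bijection between ordered forests and labelled rooted forests; equivalently, that the totality of the order leaves no nontrivial symmetries to account for. Once this is granted, Cayley's formula (or, directly, the known count for A000272) does the rest.
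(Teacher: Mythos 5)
Your proof is correct, and it establishes exactly the fact the paper relies on: the paper's own ``proof'' consists of the single sentence that the number of ordered forests of degree $n$ is $(n+1)^{n-1}$, justified only by a citation of sequence A000272 in Sloane's encyclopedia. What you have done is supply the standard proof of that cited enumeration: the increasing-bijection identification of $\F_o(n)$ with rooted forests on the labelled vertex set $\{1,\ldots,n\}$ (which is legitimate, since the total order kills all symmetries), then the adjoin-a-root bijection with labelled trees on $\{0,1,\ldots,n\}$, and finally Cayley's formula giving $(n+1)^{(n+1)-2}=(n+1)^{n-1}$. So the route is the same reduction the paper intends, with the combinatorial content made explicit rather than outsourced to a reference; there is no gap.
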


\subsection{Hopf algebra of heap-ordered trees}

\begin{defi} \cite{Grossman}
An ordered forest is \textnormal{heap-ordered} if for all $i,j \in V(\FF)$, $(i\twoheadrightarrow j)$ $\Longrightarrow$ $(i>j)$. 
The set of heap-ordered forests will be denoted by $\F_{ho}$; for all $n \geq 0$, the set of heap-ordered forests with $n$ vertices will be denoted by $\F_{ho}(n)$.
\end{defi}

For example:
\begin{eqnarray*}
\F_{ho}(0)&=&\{1\},\\
\F_{ho}(1)&=&\{\tdun{1}\},\\
\F_{ho}(2)&=&\{\tdun{1}\tdun{2},\tddeux{1}{2}\},\\
\F_{ho}(3)&=&\left\{\tdun{1}\tdun{2}\tdun{3},\tdun{1}\tddeux{2}{3},\tdun{2}\tddeux{1}{3},\tdun{3}\tddeux{1}{2},\tdtroisun{1}{2}{3},\tdtroisdeux{1}{2}{3}\right\}.
\end{eqnarray*}

If $\FF$ and $\GG$ are two heap-ordered forests, then $\mathbb{FG}$ is also heap-ordered. If $\FF$ is a heap-ordered forest, 
then any subforest of $\FF$ is heap-ordered. So the subspace $\h_{ho}$ of $\h_o$ generated by the heap-ordered forests 
is a graded Hopf subalgebra of $\h_o$.\\

Note that $\h_{ho}$ is neither commutative nor cocommutative. Indeed, $\tdun{1}.\tddeux{1}{2}=\tdun{1}\tddeux{2}{3}$ and
$\tddeux{1}{2}.\tdun{1}=\tddeux{1}{2}\tdun{3}$. Moreover:
$$\Delta(\tdtroisun{1}{3}{2})=\tdtroisun{1}{3}{2}\otimes 1+1\otimes \tdtroisun{1}{3}{2}+2\tdun{1}\otimes \tddeux{1}{2}+\tdun{1}\tdun{2}\otimes \tdun{1}.$$
So neither $\h_{ho}$ nor its graded dual $\h_{ho}^*$, are isomorphic to the Hopf algebra of heap-ordered trees of \cite{Grossman,Grossman2}, 
which is cocommutative. \\

It is well-known that the number of heap-ordered forests of degree $n$ is $n!$. Therefore, we have:

\begin{prop} 
The Poincaré-Hilbert formal series of $\h_{ho}$ is $\displaystyle f_{\h_{ho}}(x)=\sum_{n=0}^\infty n! x^n$.
\end{prop}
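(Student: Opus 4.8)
The plan is to reduce the statement to a purely enumerative fact. Since $\h_{ho}$ is graded with homogeneous component of degree $n$ equal to $Vect(\F_{ho}(n))$, we have $\dim(\h_{ho}(n))=|\F_{ho}(n)|$, so the coefficient of $x^n$ in $f_{\h_{ho}}(x)$ is precisely the number of heap-ordered forests with $n$ vertices. Hence it suffices to prove that $|\F_{ho}(n)|=n!$, and the cleanest route is to establish the recurrence $|\F_{ho}(n)|=n\,|\F_{ho}(n-1)|$ together with the base case $|\F_{ho}(0)|=1$, then conclude by induction.

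First I would analyse the role of the maximal vertex. Identifying $V(\FF)$ with $\{1,\dots,n\}$ via the increasing bijection (as in the Remark), I claim that in any $\FF\in\F_{ho}(n)$ the vertex $n$ is necessarily a leaf: if some $v\neq n$ satisfied $v\to n$, then $v\twoheadrightarrow n$ would force $v>n$ by the heap-ordered condition, which is impossible. I would then set up a deletion map $\F_{ho}(n)\to\F_{ho}(n-1)$ sending $\FF$ to the forest obtained by erasing the leaf $n$ (the remaining structure is clearly still a rooted forest, still totally ordered by $\{1,\dots,n-1\}$, and still heap-ordered, as erasing a vertex only removes oriented paths). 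The inverse direction is the insertion step: starting from $\GG\in\F_{ho}(n-1)$, I may reintroduce the new maximal vertex $n$ as a leaf, either as a child of one of the $n-1$ existing vertices $w$ (adding the edge $n\to w$) or as an isolated new root. Each of these $n$ choices produces a heap-ordered forest, since any new path $n\twoheadrightarrow w$ satisfies $n>w$ automatically, and the resulting object lies in $\F_{ho}(n)$.

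The point that needs care, and which I regard as the only real subtlety, is to check that this insertion/deletion pair is genuinely a bijection in the \emph{ordered} (hence planar, by the first Remark) category: I must verify that the total order on $\{1,\dots,n\}$ determines the planar arrangement completely, so that once the parent of $n$ (or the decision to make it a root) is fixed, the heap-ordered forest is uniquely determined and no two of the $n$ insertion choices coincide. Granting this, deletion and insertion are mutually inverse, yielding $|\F_{ho}(n)|=n\,|\F_{ho}(n-1)|$.

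Finally I would close the induction: from $|\F_{ho}(0)|=1$ (the empty forest) one gets $|\F_{ho}(n)|=n!$, consistent with the explicit lists $|\F_{ho}(1)|=1$, $|\F_{ho}(2)|=2$, $|\F_{ho}(3)|=6$ displayed above. Translating back, $\dim(\h_{ho}(n))=n!$ for all $n\geq 0$, which is exactly the assertion $f_{\h_{ho}}(x)=\sum_{n=0}^\infty n!\,x^n$. An equivalent alternative would be to encode a heap-ordered forest by the sequence of insertion choices, giving a direct bijection with $\prod_{k=1}^{n}\{1,\dots,k\}$, but the recurrence above is the most economical way to present the argument.
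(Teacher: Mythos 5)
Your argument is correct. Note, however, that the paper does not actually prove this enumeration: it simply invokes the statement ``it is well-known that the number of heap-ordered forests of degree $n$ is $n!$'' and deduces the formal series. What you have done is supply the standard proof lying behind that well-known fact, namely the insertion/deletion bijection at the maximal vertex: $n$ must be a leaf (any $v$ with $v\twoheadrightarrow n$ would satisfy $v>n$), deleting it lands in $\F_{ho}(n-1)$, and reinserting it as a child of one of the $n-1$ vertices or as a new root gives exactly $n$ preimages, whence $|\F_{ho}(n)|=n\,|\F_{ho}(n-1)|$ and $|\F_{ho}(n)|=n!$. The one subtlety you flag --- whether the planar arrangement could make distinct ordered forests collapse or multiply under insertion --- is resolved exactly as you say: in this paper an ordered forest is by definition a rooted forest together with a total order on its vertices, and the planar structure is \emph{induced} by that order (first Remark after the definition), so it carries no extra data; the forest is determined by the underlying labelled rooted forest, and your $n$ insertion choices yield $n$ pairwise distinct elements of $\F_{ho}(n)$. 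Your proof is therefore self-contained where the paper defers to the literature, at the modest cost of a page of routine verification; the paper's citation buys brevity but leaves the combinatorial content implicit.
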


%
%

\section{Permutations and parking functions}

\subsection{$\FQSym$ and $\PQSym$}

We here briefly recall the construction of the Hopf algebra $\FQSym$ of free quasi-symmetric functions, also called the Malvenuto-Reutenauer Hopf algebra
\cite{Duchamp,Malvenuto}. As a vector space, a basis of $\FQSym$ is given by the disjoint union of the symmetric groups $\S_n$, for all $n \geq 0$.
We represent a permutation $\sigma \in \S_n$ by the word $(\sigma(1)\ldots\sigma(n))$. By convention, the unique element of $\S_0$ is denoted by $1$.
The product of $\FQSym$ is given, for $\sigma \in \S_k$, $\tau \in \S_l$, by:
$$\sigma.\tau=\sum_{\zeta \in Sh(k,l)}  (\sigma \otimes \tau)\circ \zeta^{-1},$$
where $Sh(k,l)$ is the set of $(k,l)$-shuffles. In other words, the product of $\sigma$ and $\tau$ is given by shifting the letters of the word
representing $\tau$ by $k$, and then summing over all the possible shufflings of this word and of the word representing $\sigma$.
For example:
\begin{eqnarray*}
(123)(21)&=&(12354)+(12534)+(15234)+(51234)+(12543)\\
&&+(15243)+(51243)+(15423)+(51423)+(54123).
\end{eqnarray*}

Let $\sigma \in \S_n$. For all $0\leq k \leq n$, there exists a unique triple 
$\left(\sigma_1^{(k)},\sigma_2^{(k)},\zeta_k\right)\in \S_k \times \S_{n-k} \times Sh(k,l)$
such that $\sigma=\zeta_k \circ \left(\sigma_1^{(k)} \otimes \sigma_2^{(k)}\right)$. The coproduct of $\FQSym$ is then defined by:
$$\Delta(\sigma)=\sum_{k=0}^n \sigma_1^{(k)} \otimes \sigma_2^{(k)}.$$
Note that $\sigma_1^{(k)}$ and $\sigma_2^{(k)}$ are obtained by cutting the word representing $\sigma$ between the $k$-th and the $(k+1)$-th letter,
and then {\it standardizing} the two obtained word, that is to say applying to their letters the unique increasing bijection to $\{1,\ldots,k\}$ or $\{1,\ldots,n-k\}$.
For example:
\begin{eqnarray*}
\Delta((41325))&=&1\otimes (41325)+Std(4)\otimes Std(1325)+Std(41)\otimes Std(325)\\
&&+Std(413)\otimes Std(25)+Std(4132)\otimes Std(5)+(41325)\otimes 1\\
&=&1\otimes (41325)+(1) \otimes (1324)+(21) \otimes (213)\\
&&+(312)\otimes (12)+(4132) \otimes (1)+(41325) \otimes (1).
\end{eqnarray*}
Then $\FQSym$ is a Hopf algebra. It is graded, with $\FQSym(n)=vect(\S_n)$ for all $n \geq 0$.
The formal series of $\FQSym$ is:
$$f_\FQSym(x)=\sum_{n=0}^\infty n!x^n=f_{\h_o}(x).$$
Moreover, $\FQSym$ has a non-degenerate Hopf pairing, homogeneous of degree $0$, defined by:
$$\langle \sigma,\tau \rangle_\FQSym=\delta_{\sigma^{-1},\tau},$$
where $\sigma$ and $\tau$ are two permutations.\\

This construction is generalized to parking functions in \cite{Novelli2,Novelli1}. A {\it parking function} of degree $n$ is a word $(a_1,\ldots,a_n)$,
of $n$ letters in $\mathbb{N}^*$, such that in the reordered word $(a'_1,\ldots,a'_n)$ satisfies $a_i'\leq i$ for all $i$. 
For example, permutations are parking functions. Here are the parking functions of degree $\leq 3$:
\begin{eqnarray*}
&&1,\\
&&(1),\\
&&(12),\:(21),\:(11),\\
&&(123),\:(132),\:(213),\:(231),\:(312),\:(321),\\
&&(112),\:(121),\:(211),\:(113),\:(131),\:(311),\:(122),\:(212),\:(221),\:(111).
\end{eqnarray*}

If $\sigma=(a_1,\ldots,a_k)$ and $\tau=(b_1,\ldots,b_l)$, we define the parking function $\sigma \otimes \tau$ by:
$$\sigma \otimes \tau=(a_1,\ldots,a_k,b_1+k,\ldots,b_l+k).$$
Considering parking functions as maps from $\{1,\ldots,n\}$ to $\mathbb{N}^*$, we define a product on the space $\PQSym$ generated by parking functions by:
$$\sigma.\tau=\sum_{\zeta \in Sh(k,l)} (\sigma \otimes \tau) \circ \zeta^{-1},$$
where $\sigma$ and $\tau$ are parking functions of respective degrees $k$ and $l$.
For example:
\begin{eqnarray*}
(121)(11)&=&(12144)+(12414)+(14214)+(41214)+(12441)\\
&&+(14241)+(41241)+(14421)+(41421)+(44121).
\end{eqnarray*}
The standardization is extended to parking functions and $\PQSym$ inherits a coproduct similar to the coproduct of $\FQSym$.
Moreover, $\FQSym$ is a Hopf subalgebra of $\PQSym$.

\subsection{From ordered forests to permutations}

We recall the following result of \cite{FoissyUnt}:

\begin{prop}
Let $n \geq 0$. For all $\FF \in \F_o(n)$, let $S_\FF$ be the set of permutations $\sigma \in \S_n$ such that for all $1\leq i,j \leq n$,
($i \twoheadrightarrow j$) $\Longrightarrow$ ($\sigma^{-1}(i)\geq \sigma^{-1}(j)$). Let us define:
$$\Theta:\left\{\begin{array}{rcl}
\h_o&\longrightarrow&\FQSym\\
\FF \in \F_o&\longrightarrow&\displaystyle \sum_{\sigma \in S_\FF} \sigma.
\end{array}\right.$$
Then $\Theta:\h_o^{cop}\longrightarrow \FQSym$ is a Hopf algebra morphism, homogeneous of degree $0$.
\end{prop}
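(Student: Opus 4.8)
The plan is to work throughout with the combinatorial reading of $S_\FF$ as a set of linear extensions, and to verify separately that $\Theta$ is an algebra morphism and a coalgebra morphism. Since $\h_o^{cop}$ and $\FQSym$ are graded connected bialgebras and $\Theta$ is homogeneous of degree $0$, compatibility with the antipodes will then be automatic, so these two verifications suffice. To set up the dictionary: for $\FF\in\F_o(n)$ identified with $\{1,\ldots,n\}$, define the partial order $\preceq$ by $i\preceq j\Longleftrightarrow j\twoheadrightarrow i$ (the roots are the $\preceq$-minimal elements). Unwinding the condition $\sigma^{-1}(i)\geq\sigma^{-1}(j)$, one sees that $\sigma\in S_\FF$ if and only if the word $(\sigma(1)\ldots\sigma(n))$ is a linear extension of $(\{1,\ldots,n\},\preceq)$, i.e. lists every ancestor before its descendants. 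In particular $S_\FF\subseteq\S_n$, so $\Theta$ is homogeneous of degree $0$ and $\Theta(1)=1$.

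For the algebra morphism, I would observe that the poset underlying $\FF\GG$ is the disjoint union of those of $\FF$ and $\GG$, the vertices of $\GG$ being shifted by $k=|V(\FF)|$, with no relations between the two blocks. Hence $\rho\in\S_{k+l}$ is a linear extension of $\FF\GG$ exactly when its subword on the values $\{1,\ldots,k\}$ is a linear extension of $\FF$ and its subword on $\{k+1,\ldots,k+l\}$ is, after subtracting $k$, a linear extension of $\GG$. This is precisely the set of words occurring in the shuffle products $\sigma\cdot\tau$ for $\sigma\in S_\FF$, $\tau\in S_\GG$. Since each occurring word determines $\sigma$, $\tau$ and the shuffle $\zeta$ uniquely by restriction, every element of $S_{\FF\GG}$ appears with coefficient $1$ on both sides, giving $\Theta(\FF\GG)=\Theta(\FF)\Theta(\GG)$.

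For the coalgebra morphism I would use the coopposite coproduct, and the heart of the argument is a bijection between the data $(\sigma,k)$ with $\sigma\in S_\FF$ and $0\leq k\leq n$, and the pairs $(\vec v,(\alpha,\beta))$ with $\vec v\models V(\FF)$, $\alpha\in S_{Roo_{\vec v}\FF}$ and $\beta\in S_{Lea_{\vec v}\FF}$. In the forward direction, the set $W=\{\sigma(1),\ldots,\sigma(k)\}$ of the first $k$ letters of a linear extension is an order ideal of $(\{1,\ldots,n\},\preceq)$, hence equals $Roo_{\vec v}\FF$ for the unique admissible cut $\vec v$ given by the minimal elements of the complement; standardizing the first $k$ and the last $n-k$ letters produces $\sigma_1^{(k)}\in S_{Roo_{\vec v}\FF}$ and $\sigma_2^{(k)}\in S_{Lea_{\vec v}\FF}$, since standardization is exactly the increasing relabelling identifying these subforests with $\{1,\ldots,k\}$ and $\{1,\ldots,n-k\}$ and transporting the induced poset. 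Conversely, from $\vec v$ and $(\alpha,\beta)$ one de-standardizes and concatenates to form the unique word whose first block is $Roo_{\vec v}\FF$ ordered by $\alpha$ and whose second block is $Lea_{\vec v}\FF$ ordered by $\beta$; checking that this word lies in $S_\FF$ amounts to examining the four cases of a relation $x\preceq y$, the only dangerous one ($x$ in the second block, $y$ in the first) being excluded because $W$ is an order ideal. Under this bijection $(\sigma_1^{(k)},\sigma_2^{(k)})$ corresponds to $(\alpha,\beta)$, so $\Delta_{\FQSym}(\Theta(\FF))=(\Theta\otimes\Theta)(\Delta_{\h_o^{cop}}(\FF))$. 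It is precisely because the first tensor factor in $\FQSym$ comes from the first (ideal) block, which matches $Roo_{\vec v}\FF$, that the coopposite coproduct is the one that fits; this is the point anticipated by the remark preceding the proposition.

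The main obstacle I expect is the coalgebra identity, specifically making the bijection above airtight: that the first $k$ letters of every linear extension form an admissible cut, that standardization carries the induced forest poset onto the posets of $Roo_{\vec v}\FF$ and $Lea_{\vec v}\FF$, and that the de-standardized concatenation is again a linear extension. The algebra identity is a routine restriction argument, and the passage from bialgebra morphism to Hopf algebra morphism is automatic by gradedness and connectedness.
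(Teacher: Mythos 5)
Your proof is correct, and there is nothing in the paper to diverge from: the paper does not reprove this proposition but recalls it from \cite{FoissyUnt}. Your key step for the coalgebra identity --- the bijection $(\sigma,k)\leftrightarrow\left(\vec{v},\left(\sigma_1^{(k)},\sigma_2^{(k)}\right)\right)$ between pairs (linear extension, cutting point) and triples (admissible cut, pair of standardized extensions of $Roo_{\vec{v}}\FF$ and $Lea_{\vec{v}}\FF$) --- is exactly the bijection $\Upsilon$ from \cite{FoissyUnt} that the paper itself invokes later when proving that $\Theta$ respects the $Dup$-$Dend$ structures, so your route coincides with the original argument.
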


For example, if $\{a,b,c\}=\{1,2,3\}$:
\begin{eqnarray*}
\Theta(\tdun{1})&=&(1),\\
\Theta(\tdun{1}\tdun{2})&=&(12)+(21),\\
\Theta(\tddeux{1}{2})&=&(12),\\
\Theta(\tddeux{2}{1})&=&(21),\\
\Theta(\tdun{$a$}\tdun{$b$}\tdun{$c$})&=&(abc)+(acb)+(bac)+(bca)+(cab)+(cba),\\
\Theta(\tdun{$a$}\tddeux{$b$}{$c$})&=&(abc)+(bac)+(bca),\\
\Theta(\tdtroisun{$a$}{$b$}{$c$})&=&(abc)+(acb),\\
\Theta(\tdtroisdeux{$a$}{$b$}{$c$})&=&(abc).
\end{eqnarray*}

{\bf Remark.} Note that $\Theta$ can also be seen as a Hopf algebra morphism from $\h_o$ to $\FQSym^{cop}$.\\

The morphism $\Theta$ is not injective, for example $\Theta(\tddeux{1}{2}+\tddeux{2}{1}-\tdun{1}\tdun{2})=0$. From \cite{FoissyUnt}, we recall:

\begin{prop} \label{7}
The restriction of $\Theta$ to $\h_{ho}^{cop}$ is an isomorphism of graded Hopf algebras.
\end{prop}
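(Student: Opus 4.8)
The plan is to establish that the restriction
$$\Theta|_{\h_{ho}^{cop}} : \h_{ho}^{cop} \longrightarrow \FQSym$$
is an isomorphism of graded Hopf algebras by exhibiting it as a bijection on a basis. Since Proposition~6 already guarantees that $\Theta$ is a Hopf algebra morphism, homogeneous of degree $0$, it suffices to prove that $\Theta$ maps a basis of $\h_{ho}$ to a basis of $\FQSym$. Both graded components have the same finite dimension, namely $\dim \h_{ho}(n) = n! = \dim \FQSym(n) = |\S_n|$ (this is the content of Proposition~5 and the formal series of $\FQSym$), so it is enough to prove that $\Theta$ restricted to each homogeneous component is \emph{injective}, or equivalently \emph{surjective}; injectivity and surjectivity are then automatic from each other by equality of dimensions.

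First I would set up an appropriate ordering (a ``triangularity'' argument). For a heap-ordered forest $\FF \in \F_{ho}(n)$, recall that $\Theta(\FF) = \sum_{\sigma \in S_\FF} \sigma$, where $S_\FF$ consists of the permutations $\sigma$ with $(i \twoheadrightarrow j) \Rightarrow (\sigma^{-1}(i) \geq \sigma^{-1}(j))$. The key observation is that since $\FF$ is heap-ordered, the condition $i \twoheadrightarrow j$ already forces $i > j$; I would single out a canonical ``minimal'' permutation $\sigma_\FF \in S_\FF$ attached to $\FF$ --- a natural candidate is a linear extension of the forest order read off in a prescribed way (for instance the permutation obtained by listing the vertices compatibly with $\twoheadrightarrow$ and breaking ties by the total vertex order). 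The plan is then to show that the map $\FF \mapsto \sigma_\FF$ is a bijection from $\F_{ho}(n)$ to $\S_n$, and that $\Theta(\FF) = \sigma_\FF + (\text{permutations larger than } \sigma_\FF)$ with respect to some linear order on $\S_n$. This unitriangularity immediately yields that $(\Theta(\FF))_{\FF \in \F_{ho}(n)}$ is a basis of $\FQSym(n)$, giving the isomorphism degree by degree.

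The main obstacle will be constructing the correct correspondence $\FF \mapsto \sigma_\FF$ and verifying the unitriangularity claim, i.e.\ showing that $\sigma_\FF$ genuinely appears in $\Theta(\FF)$ (so that $S_\FF \neq \emptyset$ and contains the distinguished permutation) and that the assignment $\FF \mapsto \sigma_\FF$ is bijective onto $\S_n$. The heap-ordered hypothesis is exactly what makes this work: it guarantees that the vertex labels themselves already respect the orientation, so one can recover the forest structure from the permutation by reading off, for each label, which earlier labels lie below it in the tree. I would verify the bijection by describing an explicit inverse: given $\sigma \in \S_n$, reconstruct a heap-ordered forest whose distinguished linear extension is $\sigma$. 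Once the bijection and the triangularity of the expansion of $\Theta(\FF)$ in the permutation basis are established, the conclusion follows formally, since a graded morphism that is unitriangular (hence a bijection) on each finite-dimensional homogeneous component is an isomorphism of graded vector spaces, and being already a Hopf algebra morphism, it is an isomorphism of graded Hopf algebras.

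An alternative, cleaner route that I would keep in reserve is to produce an explicit two-sided inverse $\Theta^{-1} : \FQSym \to \h_{ho}^{cop}$ directly, by sending a permutation $\sigma$ to the heap-ordered forest that encodes its descent/ancestor structure, and checking compatibility with both products and both coproducts; this avoids the combinatorial triangularity bookkeeping but requires verifying the Hopf-algebraic compatibilities by hand. Given that Proposition~6 already does the hard work of showing $\Theta$ is a morphism, I expect the triangularity argument to be the most economical, with the reconstruction of the forest from a permutation being the step that demands the most care.
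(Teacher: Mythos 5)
The paper itself contains no proof of this proposition: it is recalled verbatim from \cite{FoissyUnt}. So your attempt has to be judged against what a complete argument requires. Your high-level architecture is sound and completable: since Proposition 6 already gives that $\Theta$ is a graded Hopf algebra morphism, and since $\dim \h_{ho}(n)=n!=\dim \FQSym(n)$, it suffices to show that the elements $\Theta(\FF)$, $\FF \in \F_{ho}(n)$, are linearly independent, and a bijective graded Hopf morphism is automatically a Hopf algebra isomorphism. A unitriangularity argument does exactly this.

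The genuine gap is that the step carrying all the content --- choosing the distinguished permutation $\sigma_\FF$ and proving that $\FF \mapsto \sigma_\FF$ is injective --- is precisely the step you defer, and the one concrete candidate you offer fails. For a heap-ordered forest, $i \twoheadrightarrow j$ already forces $i>j$, so the identity permutation belongs to $S_\FF$ for \emph{every} $\FF \in \F_{ho}(n)$ (this also makes your worry that $S_\FF$ might be empty moot). Consequently the greedy linear extension that lists vertices compatibly with $\twoheadrightarrow$, breaking ties by the vertex order (smallest available vertex first), produces the word $(12\cdots n)$ for every heap-ordered forest: a ``minimal'' $\sigma_\FF$ in this sense cannot distinguish forests. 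The choice that does work is the opposite extreme: take $\sigma_\FF$ to be the lexicographically \emph{maximal} element of $S_\FF$, obtained greedily by outputting at each step the largest vertex all of whose lower vertices have already been output. Then every permutation occurring in $\Theta(\FF)$ is lexicographically $\leq \sigma_\FF$ by construction, and $\FF$ is recovered from the word $\sigma_\FF$ by the rule: the parent of a letter $v$ is the nearest letter to its left that is smaller than $v$, the roots being the left-to-right minima. Injectivity of this reconstruction follows from greediness: if a letter $u<v$ occurred strictly between the parent $p$ of $v$ and $v$ in the word, then at the step where $u$ was chosen, $v$ was already available (since $p$ had been output), so the greedy rule would have chosen $v$ (or something larger) instead of $u$, a contradiction. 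With this input --- injectivity of $\FF \mapsto \sigma_\FF$, hence bijectivity onto $\S_n$ by counting, plus the triangularity $\sigma \leq_{\mathrm{lex}} \sigma_\FF$ for all $\sigma \in S_\FF$ --- your matrix argument goes through and the proposition follows. Without it, the proposal is a correct plan rather than a proof.
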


Can we extend this construction from ordered forests to parking functions, in order to obtain a Hopf algebra isomorphism from $\h_o^{cop}$ to $\PQSym$?
The answer is given by the following result:

\begin{prop}
Let $\Theta':\h_o^{cop} \longrightarrow \PQSym$ be a Hopf algebra morphism, homogeneous of degree $0$. We assume that for all $\FF \in \F_o$, 
there exists a set $S'_{\FF}$ of parking functions such that $\Theta'(\FF)=\displaystyle \sum_{\sigma \in S'_{\FF}}\sigma$. 
Then $\Theta'$ is not an isomorphism.
\end{prop}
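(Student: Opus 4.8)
The plan is to argue by a counting/parity comparison between $\h_o$ and $\PQSym$ in low degree, exploiting the constraint that $\Theta'$ is simultaneously a homogeneous degree-$0$ bialgebra morphism and a \emph{positive} map, in the sense that each $\Theta'(\FF)$ is an honest sum of distinct parking functions (no coefficients, no signs). First I would record what such positivity forces in degree $1$ and $2$: since $\Theta'$ is a graded morphism and $\F_o(1)=\{\tdun{1}\}$ while the only degree-$1$ parking function is $(1)$, we must have $\Theta'(\tdun{1})=(1)$. The coproduct and product compatibilities then pin down $\Theta'$ on $\F_o(2)=\{\tdun{1}\tdun{2},\tddeux{1}{2},\tddeux{2}{1}\}$ up to the combinatorics of the three degree-$2$ parking functions $(12),(21),(11)$, and in particular $\Theta'(\tdun{1}\tdun{2})=\Theta'(\tdun{1})\Theta'(\tdun{1})=(1)(1)=(12)+(21)$ is forced by the product formula. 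This already shows the images of the degree-$2$ forests are tightly constrained sums of $\{(12),(21),(11)\}$.

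\textbf{The dimension count.} The heart of the argument is a comparison of the \emph{images} of $\Theta'$ with the available parking functions. Because each $\Theta'(\FF)$ is a sum over a set $S'_\FF$ of parking functions with all coefficients equal to $1$, the total number of parking-function monomials appearing (with multiplicity) across a basis of $\h_o(n)$ is at least $\dim \h_o(n)=(n+1)^{n-1}$ if $\Theta'$ is to be injective on that degree, but the number of \emph{available} parking functions of degree $n$ is exactly $(n+1)^{n-1}$ as well (the parking-function count of degree $n$ is $(n+1)^{n-1}$, matching the forest count — this is why the two Hopf algebras share a Poincaré--Hilbert series). So an isomorphism would force each $S'_\FF$, $\FF$ ranging over $\F_o(n)$, to partition the set of degree-$n$ parking functions into blocks, with the images $\{\Theta'(\FF)\}$ forming a \emph{permutation-like} basis: the transition matrix from the forest basis to the parking-function basis would have to be a $0/1$ matrix whose rows are disjointly supported, i.e.\ (after reordering) the identity. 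The contradiction is then extracted by checking that this rigid disjointness is incompatible with the bialgebra structure already determined in low degree.

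\textbf{Locating the concrete contradiction.} The cleanest place to find the failure is degree $2$ or $3$. In degree $2$, the forced value $\Theta'(\tdun{1}\tdun{2})=(12)+(21)$ already occupies the two permutations $(12),(21)$, so disjointness forces $\Theta'(\tddeux{1}{2})$ and $\Theta'(\tddeux{2}{1})$ to be supported on the single remaining parking function $(11)$ — but then these two distinct basis forests would have supports inside a one-element set, so at most one of them can be nonzero and disjoint from the other, contradicting injectivity (their images cannot be two linearly independent $0/1$-sums drawn from $\{(11)\}$). I would verify that the coproduct compatibility indeed forbids $\Theta'(\tddeux{1}{2})$ or $\Theta'(\tddeux{2}{1})$ from being $0$ or from reusing $(12),(21)$: applying $\tdelta$ and using $\tdelta(\tddeux{1}{2})=\tdun{1}\otimes\tdun{1}$ forces $\tdelta(\Theta'(\tddeux{1}{2}))=(1)\otimes(1)$, which among degree-$2$ parking functions is satisfied by $(12),(21),(11)$ but the product already consumed $(12),(21)$, squeezing both remaining forests into $(11)$ and breaking injectivity.

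\textbf{The main obstacle.} The hard part will be making the ``disjoint support'' heuristic rigorous rather than merely numerically suggestive: equal Poincaré--Hilbert series guarantees the right \emph{count} of parking functions but does not by itself force the $0/1$ transition matrix to be a permutation — one must rule out cancellation-free bases whose $0/1$ rows overlap yet still span, which is only impossible once one also invokes that $\Theta'$ respects the \emph{unit/grouplike} normalization and the coproduct. I would therefore carry the coproduct bookkeeping carefully through degree $2$ (and fall back to degree $3$ if degree $2$ proves too tight to pin down $S'_{\tddeux{2}{1}}$), since that is where the combinatorial rigidity forces two distinct forests to collide on the lone non-permutation parking function $(11)$, yielding the contradiction that $\Theta'$ cannot be injective, hence not an isomorphism.
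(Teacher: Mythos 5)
There is a genuine gap, and it sits exactly where you flagged ``the main obstacle'': the disjoint-support claim is not just hard to make rigorous, it is false, and your concrete degree-$2$ contradiction relies on it. An invertible $0/1$ transition matrix need not be a permutation matrix (rows may overlap: e.g.\ rows $(1,1,0)$, $(1,0,0)$, $(0,0,1)$ are independent), so an isomorphism does \emph{not} force the sets $S'_\FF$, $\FF \in \F_o(n)$, to partition the degree-$n$ parking functions. Worse, the problematic cases realize precisely such an overlapping-but-independent configuration: the coproduct constraint (each $\tddeux{1}{2}$, $\tddeux{2}{1}$ has reduced coproduct $\tdun{1}\otimes\tdun{1}$, so its image must be a \emph{single} parking function among $(12),(21),(11)$) is perfectly compatible with, say, $S'_{\tddeux{1}{2}}=\{(12)\}$ and $S'_{\tddeux{2}{1}}=\{(11)\}$, and then the three degree-$2$ images $(12)+(21)$, $(12)$, $(11)$ are linearly independent. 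So nothing ``squeezes both remaining forests into $(11)$'', and no contradiction exists in degree $2$ for these assignments.

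This is why the paper's proof cannot stop in degree $2$: after disposing of the easy cases ($S'_{\tddeux{1}{2}}=S'_{\tddeux{2}{1}}$, or both images being permutations, whence $\Theta'(\tddeux{1}{2}+\tddeux{2}{1})=\Theta'(\tdun{1}\tdun{2})$), it treats the four remaining cases where one tree goes to $(11)$ and the other to $(12)$ or $(21)$ by pushing the coproduct bookkeeping into degree $3$. For instance, if $S'_{\tddeux{1}{2}}=\{(12)\}$ and $S'_{\tddeux{2}{1}}=\{(11)\}$, then both $\tdtroisdeux{2}{3}{1}$ and $\tdtroisdeux{1}{3}{2}$ have reduced coproduct mapping to $(12)\otimes(1)+(1)\otimes(11)$, which forces $S'_{\tdtroisdeux{2}{3}{1}}=S'_{\tdtroisdeux{1}{3}{2}}=\{(122)\}$, so two distinct basis forests collide and $\Theta'$ is not injective; the other three cases are symmetric. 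Your proposal gestures at ``falling back to degree $3$'' but never does this case analysis, and since your primary argument (the partition/permutation-matrix claim) is invalid, the fallback is where the entire proof actually lives. To repair the proposal you must delete the dimension-count step and replace it with the four-case degree-$3$ collision argument.
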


\begin{proof} As $\Theta'$ is homogeneous of degree $0$, if $\FF \in \F_o(n)$, then $S'_{\FF}\subseteq \PQSym(n)$, so is a set of parking functions
of size $n$. So $S'_{\tdun{1}}=\{(1)\}$. In particular:
$$\Theta'(\tdun{1}\tdun{2})=\Theta'(\tdun{1})\Theta'(\tdun{1})=(1)(1)=(12)+(21),$$
so $S'_{\tdun{1}\tdun{2}}=\{(12),(21)\}$.
Moreover:
$$\Delta^{op}(\tddeux{1}{2})=\tddeux{1}{2}\otimes 1+1\otimes \tddeux{1}{2}+\tdun{1}\otimes \tdun{1},\hspace{.5cm}
\Delta^{op}(\tddeux{2}{1})=\tddeux{2}{1}\otimes 1+1\otimes \tddeux{2}{1}+\tdun{1}\otimes \tdun{1}.$$
So $S'_{\tddeux{1}{2}}$ and $S'_{\tddeux{2}{1}}$ are equal to $\{(12)\}$, $\{(21)\}$ or $\{(11)\}$.
If they are equal, then $\Theta'$ is not injective. Let us assume that they are different. If both are equal to $\{(12)\}$ or $\{(21)\}$, then 
$\Theta'(\tddeux{1}{2}+\tddeux{2}{1})=(12)+(21)=\Theta'(\tdun{1}\tdun{2})$, so $\Theta'$ is not injective. It remains four cases:
\begin{itemize}
\item $S'_{\tddeux{1}{2}}=\{(12)\}$ and $S'_{\tddeux{2}{1}}=\{(11)\}$. Then:
$$\tdelta\circ \Theta'(\tdtroisdeux{2}{3}{1})=(\Theta' \otimes \Theta')\circ \tdelta^{op}(\tdtroisdeux{2}{3}{1})=(12)\otimes (1)+(1)\otimes (11),$$
so the only possibility is $S'_{\tdtroisdeux{2}{3}{1}}=\{(122)\}$. Similarly:
$$\tdelta\circ \Theta'(\tdtroisdeux{1}{3}{2})=(\Theta' \otimes \Theta')\circ \tdelta^{op}(\tdtroisdeux{1}{3}{2})=(12)\otimes (1)+(1)\otimes (11),$$
so the only possibility is $S'_{\tdtroisdeux{1}{3}{2}}=\{(122)\}$. Hence, $\Theta'$ is not injective.

\item  $S'_{\tddeux{1}{2}}=\{(11)\}$ and $S'_{\tddeux{2}{1}}=\{(12)\}$. Similarly, considering $\tdtroisdeux{3}{1}{2}$ and $\tdtroisdeux{2}{1}{3}$, we conclude
that $\Theta'$ is not injective.

\item $S'_{\tddeux{1}{2}}=\{(21)\}$ and $S'_{\tddeux{2}{1}}=\{(11)\}$. Then:
$$\tdelta\circ \Theta' (\tdtroisdeux{2}{1}{3})=\tdelta \circ \Theta'(\tdtroisdeux{3}{1}{2})=(11)\otimes (1)+(1)\otimes (21).$$
So $S'_{\tdtroisdeux{2}{1}{3}}=S'_{\tdtroisdeux{3}{1}{2}}=\{(221)\}$: $\Theta'$ is not injective.

\item  $S'_{\tddeux{1}{2}}=\{(11)\}$ and $S'_{\tddeux{2}{1}}=\{(21)\}$. Similarly, considering $\tdtroisdeux{1}{3}{2}$ and $\tdtroisdeux{2}{3}{1}$, we conclude
that $\Theta'$ is not injective.
\end{itemize}
Therefore, $\Theta'$ is never injective. \end{proof} \\

{\bf Remark}. It is possible to prove a similar result for Hopf algebra morphisms from $\h_o$ to $\PQSym$.

\section{Pairing on $\h_o$}

\subsection{Definition}

\begin{theo}
For all $\FF,\GG \in \F_o$, we put:
$$S(\FF,\GG)=\left\{f:V(\FF)\longrightarrow V(\GG),\: bijective\:\mid \: 
\substack{\displaystyle \forall x,y \in V(\FF),\: (x \twoheadrightarrow y) \Longrightarrow (f(x)\geq f(y))\\[2mm]
\displaystyle \forall x,y \in V(\FF), \:(f(x) \twoheadrightarrow f(y))\Longrightarrow (x \geq y)}\right\}.$$
We define a pairing on $\h_o^{cop}$ by $\langle \FF,\GG\rangle=|S(\FF,\GG)|$. This pairing is Hopf, symmetric, and homogeneous of degree $0$.
\end{theo}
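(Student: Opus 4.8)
The plan is to dispatch the three properties separately, leaving the Hopf compatibility for last as it carries all the content.

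\emph{Homogeneity} is immediate: a bijection $V(\FF)\to V(\GG)$ can exist only when $\FF$ and $\GG$ have the same number of vertices, so $S(\FF,\GG)=\emptyset$, and hence $\langle\FF,\GG\rangle=0$, as soon as $\FF$ and $\GG$ have different degrees. For \emph{symmetry}, I would observe that the map $f\mapsto f^{-1}$ interchanges the two conditions defining $S(\FF,\GG)$: writing $u=f(x)$ and $v=f(y)$, the implication $(f(x)\twoheadrightarrow f(y))\Rightarrow(x\geq y)$ becomes the first condition for $f^{-1}$, and $(x\twoheadrightarrow y)\Rightarrow(f(x)\geq f(y))$ becomes the second. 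Thus $f\mapsto f^{-1}$ is a bijection $S(\FF,\GG)\to S(\GG,\FF)$, whence $\langle\FF,\GG\rangle=\langle\GG,\FF\rangle$.

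For the \emph{Hopf} property, thanks to symmetry it suffices to prove the single compatibility
$$\langle \FF\GG,\HH\rangle=\sum_{\vec{v}\models V(\HH)}\langle\FF,Roo_{\vec{v}}\HH\rangle\,\langle\GG,Lea_{\vec{v}}\HH\rangle=\langle\FF\otimes\GG,\Delta^{cop}(\HH)\rangle,$$
together with the counit identities $\langle\FF,1\rangle=\varepsilon(\FF)$ and $\langle 1,\GG\rangle=\varepsilon(\GG)$, which hold because $S(\FF,1)$ is nonempty, and then a singleton, only for $\FF=1$. The remaining compatibility $\langle\FF,\GG\HH\rangle=\langle\Delta^{cop}(\FF),\GG\otimes\HH\rangle$ is then obtained by applying symmetry to both sides of the displayed identity. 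To prove the latter, I would set up a bijection between $S(\FF\GG,\HH)$ and $\bigsqcup_{\vec{v}}S(\FF,Roo_{\vec{v}}\HH)\times S(\GG,Lea_{\vec{v}}\HH)$. The decisive observation is that, for $f\in S(\FF\GG,\HH)$, the image $B:=f(V(\GG))$ is closed from above: if $w\in B$ and $w'\twoheadrightarrow w$ in $\HH$, then writing $w=f(y)$ with $y\in V(\GG)$ and assuming $w'=f(x)$ with $x\in V(\FF)$, the second condition would give $x\geq y$, contradicting the product convention that every vertex of $\FF$ is smaller than every vertex of $\GG$; hence $w'\in B$. Such up-closed subsets are exactly the leaf-parts of admissible cuts, the cut $\vec{v}$ being recovered as the minimal vertices of $B$, so this produces a unique $\vec{v}\models V(\HH)$ with $Lea_{\vec{v}}\HH=B$ and $Roo_{\vec{v}}\HH$ the complementary down-closed subforest.

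I would then check that $f\mapsto(\vec{v},f|_{V(\FF)},f|_{V(\GG)})$ and its obvious gluing inverse are mutually reciprocal. Here one uses that a path in $\HH$ joining two vertices of an up-closed (resp. down-closed) set stays inside that set, so the relation $\twoheadrightarrow$ inside $Lea_{\vec{v}}\HH$ and $Roo_{\vec{v}}\HH$ is the restriction of the relation in $\HH$; both conditions then localize to the two subforests. Conversely, gluing $g\in S(\FF,Roo_{\vec{v}}\HH)$ and $h\in S(\GG,Lea_{\vec{v}}\HH)$, the only case of the second condition not handled by $g$ or $h$ is a relation $f(x)\twoheadrightarrow f(y)$ with $f(x)$ a vertex of $Lea_{\vec{v}}\HH$ and $f(y)$ a vertex of $Roo_{\vec{v}}\HH$: then $x\in V(\GG)$ and $y\in V(\FF)$, so $x\geq y$ holds automatically (the reverse mixed case cannot occur, since $Roo_{\vec{v}}\HH$ is stable under $\twoheadrightarrow$). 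I expect the main obstacle to be precisely this bookkeeping: matching the product convention with the correct factor of the \emph{coopposite} coproduct, which is why the statement is phrased on $\h_o^{cop}$, and verifying that the two implications defining $S$ split cleanly along the up-set/down-set dichotomy. Once that dichotomy is in place the remaining cases are routine.
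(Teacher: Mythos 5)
Your proof is correct and follows essentially the same route as the paper's: the same key observation that for $f\in S(\FF\GG,\HH)$ the image $f(V(\GG))$ is up-closed and is therefore the leaf-part of a unique admissible cut, the same restriction/gluing bijection onto $\bigsqcup_{\vec{v}}S(\FF,Roo_{\vec{v}}\HH)\times S(\GG,Lea_{\vec{v}}\HH)$, and the same appeal to symmetry to obtain the remaining Hopf compatibility. The additional details you supply (the counit identities, recovering the cut as the minimal vertices of the up-closed set) are correct but only make explicit what the paper leaves implicit.
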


\begin{proof}
If $\FF$ and $\GG$ do not have the same number of vertices, then $S(\FF,\GG)=\emptyset$, so $\langle \FF,\GG\rangle=0$: the pairing is homogeneous.
Moreover, the map $f\longrightarrow f^{-1}$ is a bijection from $S(\FF,\GG)$ to $S(\GG,\FF)$ for any $\FF,\GG \in \F_o$, so the pairing is symmetric.\\

We now prove that $\langle \FF_1\FF_2,\GG \rangle=\langle \FF_1 \otimes \FF_2,\Delta^{op}(\GG)\rangle$ for any forests $\FF_1,\FF_2,\GG \in \F_o$. 
We consider $f\in S(\FF_1\FF_2,\GG)$. Let $x'\in f(V(\FF_2))$ and $y' \in V(\GG)$, such that $y' \twoheadrightarrow x'$. 
As $f$ is bijective, there exists $x \in V(\FF_2)$, $y\in V(\FF_1\FF_2)$, such that $f(x)=x'$ and $f(y)=y'$. As $f \in S(\FF,\GG)$, $y \geq x$ in $\FF$. 
Since $x\in V(\FF_2)$, $y \in V(\FF_2)$, it follows that $y' \in f(V(\FF_2))$. Hence, there exists a unique admissible cut $\vec{v}_f \models V(\GG)$,
such that $Lea_{\vec{v}_f}(\GG)=f(V(\FF_2))$ and $Roo_{\vec{v}_f}(\GG)=f(V(\FF_1))$. 
Moreover, $f_{\mid V(\FF_1)}\in S(\FF_1,Roo_{\vec{v}_f}(\GG))$ and $f_{\mid V(\FF_2)}\in S(\FF_2,Lea_{\vec{v}_f}(\GG))$.
Hence, this defines a map:
$$\upsilon:\left\{\begin{array}{rcl}
S(\FF_1\FF_2,\GG)&\longrightarrow&\displaystyle \bigsqcup_{\vec{v} \models V(\GG)} S(\FF_1,Roo_{\vec{v}}(\GG))\times S(\FF_2,Lea_{\vec{v}}(\GG))\\
f&\longrightarrow&(f_{\mid V(\FF_1)},f_{\mid V(\FF_2)}).
\end{array}\right.$$
It is clearly injective. Let us show that it is surjective. Let $\vec{v}\models V(\GG)$, $(f_1,f_2) \in S(\FF_1,Roo_{\vec{v}}(\GG))\times S(\FF_2,Lea_{\vec{v}}(\GG))$.
Let $f:V(\FF_1\FF_2)\longrightarrow V(\GG)$ be the unique bijection such that $f_{\mid V(\FF_i)}=f_i$ for $i=1,2$. 
Let us show that $f \in S(\FF_1\FF_2,\GG)$. 

If $x\twoheadrightarrow y$ in $\FF_1\FF_2$, then $x \twoheadrightarrow y$ in $\FF_i$, for $i=1$ or $2$. So $f_i(x) \geq f_i(y)$ and $f(x) \geq f(y)$.

Let us assume that $f(x) \twoheadrightarrow f(y)$ in $\GG$. Three cases are then possible:
\begin{itemize}
\item $f(x) \twoheadrightarrow f(y)$ in $Roo_{\vec{v}}(\GG)$: then $f(x)=f_1(x)$, $f(y)=f_1(y)$. So $x \geq y$ in $\FF_1$, so $x \geq y$ in $\FF_1\FF_2$.
\item $f(x) \twoheadrightarrow f(y)$ in $Lea_{\vec{v}}(\GG)$: similar proof.
\item $f(x) \in Lea_{\vec{v}}(\GG)$ and $f(y) \in Roo_{\vec{v}}(\GG)$: so $y\in V(\FF_1)$ and $x \in V(\FF_2)$, so $x \geq y$ in $\FF_1\FF_2$.
\end{itemize}
We conclude that $\upsilon$ is a bijection. So the following equation holds:
$$\langle \FF_1\FF_2,\GG \rangle=|S(\FF_1\FF_2,\GG)|=\sum_{\vec{v}\models V(\GG)} |S(\FF_1,Roo_{\vec{v}}(\GG))||S(\FF_2,Lea_{\vec{v}}(\GG))|
=\langle \FF_1 \otimes \FF_2,\Delta^{op}(\GG)\rangle.$$
As it is symmetric, the pairing is a Hopf pairing. \end{proof}\\

We list  the matrices of the pairing in degree $1$ and $2$:
$$\begin{array}{c|c}
&\tdun{1}\\
\hline \tdun{1}&1
\end{array} \hspace{1cm}
\begin{array}{c|c|c|c}
&\tdun{1}\tdun{2}&\tddeux{1}{2}&\tddeux{2}{1}\\
\hline\tdun{1}\tdun{2}&2&1&1\\
\hline\tddeux{1}{2}&1&1&0\\
\hline\tddeux{2}{1}&1&0&1
\end{array}$$

This pairing is degenerate. For example, $\tddeux{1}{2}+\tddeux{2}{1}-\tdun{1}\tdun{2}$ is in the kernel of the pairing.

\subsection{Kernel of the pairing}

\begin{lemma}
Let $\FF,\GG \in \F_o(n)$. The elements of $S_\FF$, $S_\GG$ and $S(\FF,\GG)$ can all be seen as elements of $\S_n$,
identifying $V(\FF)$ and $V(\GG)$ with $\{1,\ldots,n\}$, using the unique increasing bijections from $V(\FF)$ or $V(\GG)$ to $\{1,\ldots,n\}$. 
Then $S(\FF,\GG)=S_\FF^{-1} \cap S_\GG$.
\end{lemma}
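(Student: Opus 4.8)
The plan is to view every object as an element of $\S_n$ and then check that the two defining conditions of $S(\FF,\GG)$ correspond exactly, one to membership in $S_\FF^{-1}$ and the other to membership in $S_\GG$. Concretely, after identifying $V(\FF)$ and $V(\GG)$ with $\{1,\ldots,n\}$ through the increasing bijections (which preserve the total orders), a map $f \in S(\FF,\GG)$ becomes a permutation $f \in \S_n$ subject to the two implications
$$(x \twoheadrightarrow y \text{ in } \FF) \Longrightarrow (f(x) \geq f(y)), \qquad (f(x) \twoheadrightarrow f(y) \text{ in } \GG) \Longrightarrow (x \geq y),$$
for all $x,y \in \{1,\ldots,n\}$. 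I would prove the two inclusions simultaneously by showing that the first implication is equivalent to $f \in S_\FF^{-1}$ and the second to $f \in S_\GG$; this immediately yields $S(\FF,\GG) = S_\FF^{-1} \cap S_\GG$.

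For the first equivalence, I would rewrite $S_\FF^{-1}$ directly. By definition, $f \in S_\FF^{-1}$ means $f^{-1} \in S_\FF$, that is $(i \twoheadrightarrow j \text{ in } \FF) \Rightarrow ((f^{-1})^{-1}(i) \geq (f^{-1})^{-1}(j))$ for all $i,j$; since $(f^{-1})^{-1} = f$, this is precisely $(i \twoheadrightarrow j \text{ in } \FF) \Rightarrow (f(i) \geq f(j))$, i.e. the first implication above. So the first condition characterizes exactly the set $S_\FF^{-1}$.

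For the second equivalence, I would start from the definition $f \in S_\GG$, namely $(i \twoheadrightarrow j \text{ in } \GG) \Rightarrow (f^{-1}(i) \geq f^{-1}(j))$ for all $i,j$, and reparametrize by setting $i = f(x)$ and $j = f(y)$. Because $f$ is a bijection, the pair $(i,j)$ ranges over all pairs as $(x,y)$ does, and $f^{-1}(i) = x$, $f^{-1}(j) = y$; thus the condition becomes $(f(x) \twoheadrightarrow f(y) \text{ in } \GG) \Rightarrow (x \geq y)$, which is exactly the second implication. Intersecting the two characterizations gives $f \in S(\FF,\GG) \iff f \in S_\FF^{-1} \cap S_\GG$, which is the claimed identity.

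The argument is essentially a bookkeeping of definitions, so no step is genuinely hard; the only place demanding care is keeping track of which total order each inequality refers to --- the order on $V(\GG)$ in the first implication and the order on $V(\FF)$ in the second --- and checking that the increasing-bijection identifications send these orders to the standard order on $\{1,\ldots,n\}$, so that the inversions and reparametrization above are legitimate. I would also note in passing that this lemma re-expresses $\langle \FF,\GG\rangle = |S(\FF,\GG)|$ in purely permutation-theoretic terms, which is what will later let one relate the pairing on $\h_o$ to the pairing on $\FQSym$.
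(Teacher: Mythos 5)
Your proof is correct and follows essentially the same route as the paper's: both arguments are a direct unwinding of the definitions of $S(\FF,\GG)$, $S_\FF$ and $S_\GG$, hinging on the single observation that the increasing identifications of $V(\FF)$ and $V(\GG)$ with $\{1,\ldots,n\}$ make the three total orders agree (the paper phrases this as chains ``in $\GG$, so in $\{1,\ldots,n\}$, so in $\FF$'' while proving the two set inclusions separately, whereas you organize it as two condition-by-condition equivalences, one via inverting $f$ and one via the reparametrization $i=f(x)$, $j=f(y)$). The difference is purely organizational, so nothing further is needed.
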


\begin{proof} Let $f \in S(\FF,\GG)$. If $x\twoheadrightarrow y$ in $\FF$, then $f(x) \geq f(y)$ in $\GG$, so $f(x) \geq f(y)$ in $\{1,\ldots,n\}$,
so $f(x) \geq f(y)$ in $\FF$. So $f^{-1} \in S_\FF$ and $f \in S_\FF^{-1}$.
If $x' \twoheadrightarrow y'$ in $\GG$, then $f^{-1}(x) \geq f^{-1}(y)$ in $\FF$, so in $\{1,\ldots,n\}$, so in $\GG$. Hence, $f \in S_\GG$.

Let $f \in S_\FF^{-1} \cap S_\GG$. If $x \twoheadrightarrow y$ in $\FF$, as $f^{-1} \in S_\FF$, $f(x) \geq f(y)$ in $\FF$, so in $\{1,\ldots,n\}$,
so in $\GG$. If $f(x) \twoheadrightarrow f(y)$ in $\GG$, then as $f \in S_\GG$, $x \geq y$ in $\GG$, so in $\{1,\ldots,n\}$, 
so in $\FF$. Hence, $f \in S(\FF,\GG)$. \end{proof}

\begin{prop}
For any $x,y \in \h_o$, $\langle x,y \rangle=\langle \Theta(x),\Theta(y)\rangle_\FQSym$.
\end{prop}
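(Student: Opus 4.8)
The plan is to reduce the statement $\langle x,y\rangle=\langle \Theta(x),\Theta(y)\rangle_\FQSym$ to the case where $x=\FF$ and $y=\GG$ are basis elements, i.e. ordered forests, by bilinearity of both pairings. Moreover, both pairings are homogeneous of degree $0$, so I may assume $\FF,\GG\in\F_o(n)$ for a common $n$; otherwise both sides vanish. Thus it suffices to prove $\langle \FF,\GG\rangle=\langle\Theta(\FF),\Theta(\GG)\rangle_\FQSym$ for all $\FF,\GG\in\F_o(n)$.

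First I would unfold the right-hand side using the definitions from Section 2. By definition $\Theta(\FF)=\sum_{\sigma\in S_\FF}\sigma$ and $\Theta(\GG)=\sum_{\tau\in S_\GG}\tau$, where $S_\FF,S_\GG\subseteq\S_n$ after identifying vertex sets with $\{1,\ldots,n\}$ via the increasing bijections. Since the $\FQSym$-pairing is bilinear with $\langle\sigma,\tau\rangle_\FQSym=\delta_{\sigma^{-1},\tau}$, I compute
$$\langle\Theta(\FF),\Theta(\GG)\rangle_\FQSym=\sum_{\sigma\in S_\FF}\sum_{\tau\in S_\GG}\delta_{\sigma^{-1},\tau}=\bigl|\{(\sigma,\tau)\in S_\FF\times S_\GG:\sigma^{-1}=\tau\}\bigr|=\bigl|S_\FF^{-1}\cap S_\GG\bigr|.$$
Here the last equality holds because the condition $\sigma^{-1}=\tau$ forces $\tau$ to range over exactly those permutations that are simultaneously of the form $\sigma^{-1}$ for some $\sigma\in S_\FF$ (so $\tau\in S_\FF^{-1}$) and lie in $S_\GG$, each such $\tau$ being counted once.

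It then remains to identify $|S_\FF^{-1}\cap S_\GG|$ with the left-hand side $\langle\FF,\GG\rangle=|S(\FF,\GG)|$. This is precisely the content of the preceding Lemma, which asserts $S(\FF,\GG)=S_\FF^{-1}\cap S_\GG$ under the same identification of vertex sets with $\{1,\ldots,n\}$. Applying that lemma gives $|S(\FF,\GG)|=|S_\FF^{-1}\cap S_\GG|$, and the two sides agree. I anticipate no serious obstacle here: the entire argument is a bookkeeping reduction once the Lemma is granted, and the Lemma has already been proved. The only point demanding a little care is to make sure the three objects $S_\FF$, $S_\GG$ and $S(\FF,\GG)$ are all read as subsets of $\S_n$ with respect to the \emph{same} pair of increasing bijections $V(\FF)\to\{1,\ldots,n\}$ and $V(\GG)\to\{1,\ldots,n\}$, so that the set-theoretic equality of the Lemma matches the enumeration coming out of the $\delta_{\sigma^{-1},\tau}$ computation.
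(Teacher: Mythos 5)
Your proof is correct and takes essentially the same route as the paper: reduce to forests $\FF,\GG$ by bilinearity, expand $\langle\Theta(\FF),\Theta(\GG)\rangle_\FQSym$ via $\delta_{\sigma^{-1},\tau}$ to get $|S_\FF^{-1}\cap S_\GG|$, and conclude by the preceding lemma identifying this set with $S(\FF,\GG)$.
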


\begin{proof} It is enough to take $x=\FF$, $y=\GG$ in $\F_o$. Then:
$$\langle \Theta(\FF),\Theta(\GG)\rangle_\FQSym=\sum_{\sigma \in S_\FF,\tau \in S_\GG} \langle \sigma,\tau\rangle
=\sum_{\sigma \in S_\FF,\tau \in S_\GG} \delta_{\sigma^{-1},\tau}=|S_\FF^{-1} \cap S_\GG|=|S(\FF,\GG)|=\langle \FF,\GG\rangle.$$
So $\Theta$ respects the pairings. \end{proof}

\begin{cor} \label{12}
The kernel of the pairing on $\h_o$ is $Ker(\Theta)$. Moreover, the restriction of the pairing to $\h_{ho}$ is non-degenerate.
\end{cor}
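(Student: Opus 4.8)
The plan is to transport everything through $\Theta$, using the previous proposition (which gives $\langle x,y\rangle=\langle \Theta(x),\Theta(y)\rangle_\FQSym$ for all $x,y\in\h_o$), the non-degeneracy of the pairing on $\FQSym$, and Proposition \ref{7} (which says that $\Theta$ restricts to an isomorphism $\h_{ho}\to\FQSym$). Since the pairing on $\h_o$ is symmetric, its left and right kernels coincide, so I may speak unambiguously of its kernel.

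First I would observe that $\Theta$ is surjective: as $\h_{ho}\subseteq\h_o$ and $\Theta$ restricts to an isomorphism of $\h_{ho}$ onto $\FQSym$ by Proposition \ref{7}, we get $\FQSym=\Theta(\h_{ho})\subseteq\Theta(\h_o)\subseteq\FQSym$, hence $\Theta(\h_o)=\FQSym$. For the first assertion I then prove the two inclusions. If $x\in Ker(\Theta)$, then for every $y\in\h_o$ the previous proposition gives $\langle x,y\rangle=\langle 0,\Theta(y)\rangle_\FQSym=0$, so $x$ lies in the kernel of the pairing. Conversely, if $x$ lies in the kernel of the pairing, then $\langle \Theta(x),\Theta(y)\rangle_\FQSym=0$ for all $y\in\h_o$; by surjectivity of $\Theta$ the element $\Theta(y)$ runs over all of $\FQSym$, so $\Theta(x)$ is orthogonal to every element of $\FQSym$, and non-degeneracy forces $\Theta(x)=0$, i.e. $x\in Ker(\Theta)$.

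For the second assertion I would argue in the same spirit but inside $\h_{ho}$. Suppose $x\in\h_{ho}$ satisfies $\langle x,y\rangle=0$ for all $y\in\h_{ho}$. Then $\langle \Theta(x),\Theta(y)\rangle_\FQSym=0$ for all such $y$, and as $y$ runs over $\h_{ho}$ its image $\Theta(y)$ runs over all of $\FQSym$ by Proposition \ref{7}. Non-degeneracy of the $\FQSym$-pairing gives $\Theta(x)=0$, and injectivity of $\Theta$ on $\h_{ho}$ (again Proposition \ref{7}) yields $x=0$; hence the restricted pairing is non-degenerate.

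I do not expect a genuine obstacle here: the whole proof is a routine transport of structure along $\Theta$. The only delicate point is the converse inclusion in the first assertion, where surjectivity of $\Theta$ is essential --- without it one could only conclude that $\Theta(x)$ is orthogonal to the image of $\Theta$ rather than to all of $\FQSym$ --- and this surjectivity is exactly what Proposition \ref{7} supplies via $\h_{ho}$.
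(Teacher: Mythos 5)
Your proof is correct and follows essentially the same route as the paper: both use the isometry property $\langle x,y\rangle=\langle\Theta(x),\Theta(y)\rangle_\FQSym$, the surjectivity of $\Theta$ to identify the kernel of the pairing with $Ker(\Theta)$, and the restriction isomorphism $\Theta_{\mid\h_{ho}}$ together with non-degeneracy on $\FQSym$ for the second assertion. The only (harmless) difference is that you explicitly derive the surjectivity of $\Theta$ from Proposition \ref{7}, whereas the paper simply asserts it.
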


\begin{proof} $\Theta_{\mid \h_{ho}}$ is an isometry from $\h_{ho}$ to $\FQSym$. As the pairing of $\FQSym$ is non-degenerate,
the same holds for the pairing of $\h_{ho}$. Moreover, for any $x \in \h_o$, as $\Theta$ is surjective:
\begin{eqnarray*}
x \in \h_o^\perp&\Longleftrightarrow&\forall y\in \h_o, \: \langle x,y\rangle=0\\
&\Longleftrightarrow&\forall y\in \h_o, \: \langle \Theta(x),\Theta(y)\rangle_\FQSym=0\\
&\Longleftrightarrow&\forall y'\in \FQSym, \: \langle \Theta(x),y'\rangle=0\\
&\Longleftrightarrow&\Theta(x) \in \FQSym^\perp\\
&\Longleftrightarrow& \Theta(x)=0.
\end{eqnarray*}
So $\h_o^\perp=Ker(\Theta)$. \end{proof}\\

The next proposition gives application of the pairing:

\begin{prop}
For all $n\geq 0$, for all $\FF \in \F_o(n)$, $\displaystyle |S_\FF|=\frac{|\FF|!}{\FF!}=\langle \tdun{$1$}\ldots \tdun{$n$},\FF \rangle$.
\end{prop}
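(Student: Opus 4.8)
The statement bundles two equalities, and I would establish them separately. The equality $|S_\FF|=\langle \tun\cdots\tun,\FF\rangle$ (with $n$ vertices in the left argument) is essentially immediate from the previous Lemma. Indeed, the forest $\tun\cdots\tun$ has no edges, so the only pairs with $i\twoheadrightarrow j$ are those with $i=j$, and the defining condition of $S_{\tun\cdots\tun}$ is vacuous; hence $S_{\tun\cdots\tun}=\S_n$. The Lemma then gives $S(\tun\cdots\tun,\FF)=S_{\tun\cdots\tun}^{-1}\cap S_\FF=\S_n\cap S_\FF=S_\FF$, so that $\langle \tun\cdots\tun,\FF\rangle=|S(\tun\cdots\tun,\FF)|=|S_\FF|$. (Equivalently, one could route this through the preceding proposition $\langle x,y\rangle=\langle\Theta(x),\Theta(y)\rangle_\FQSym$, noting that $\Theta(\tun\cdots\tun)$ is the sum of all of $\S_n$.)

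The real content is the identity $|S_\FF|=|\FF|!/\FF!$, which I would prove as a hook-length count for forests. First I would reformulate $S_\FF$: the map $\sigma\mapsto \ell:=\sigma^{-1}$ identifies $S_\FF$ with the set of bijective labelings $\ell:V(\FF)\to\{1,\ldots,n\}$ such that $i\twoheadrightarrow j$ implies $\ell(i)\geq\ell(j)$. Since the labels are distinct, this says that every vertex receives a strictly smaller label than each of its proper descendants, that is, $\ell(v)=\min\{\ell(w):w\twoheadrightarrow v\}$ for all $v$. Thus $|S_\FF|$ counts the increasing labelings of $\FF$ (roots smallest, labels growing toward the leaves).

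I would then evaluate this count by induction on $n=|\FF|$ through the natural root-removal decomposition. If $\FF$ has tree components $T_1,\ldots,T_m$ with $m\geq 2$, the labels split among the components and each component is labelled increasingly with no relations across components, so $|S_\FF|=\binom{n}{|T_1|,\ldots,|T_m|}\prod_j |S_{T_j}|$, reducing to the smaller components. If $\FF$ is a single tree with root $r$, then $r$ is forced to carry the smallest label and the rest is exactly an increasing labeling of the forest $\FF'$ obtained by deleting $r$; since $h_v:=|\{w:w\twoheadrightarrow v\}|$ is unchanged for $v\neq r$ while $h_r=n$, one has $\FF!=n\cdot\FF'!$, giving $|S_\FF|=|S_{\FF'}|=(n-1)!/\FF'!=n!/\FF!$. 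Plugging the inductive hypothesis into the multicomponent case, the multinomial coefficient cancels all the subtree factorials against the products $\prod_j(|T_j|!/T_j!)$ and leaves $n!/\prod_j T_j!=n!/\FF!$, as $\FF!=\prod_j T_j!$.

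The only delicate point, and the step I would be most careful with, is the combinatorial bookkeeping: verifying that the reformulated condition $\ell(v)=\min\{\ell(w):w\twoheadrightarrow v\}$ genuinely decouples the labels across distinct components and across the subtrees above a vertex (so that only the relative order inside each piece matters, counted by the multinomial), and that the hook factor $h_r=n$ contributed by each root is exactly the factor by which $\FF!$ exceeds the product of the subtree factorials. I do not expect a real obstacle here; once the problem is phrased in terms of increasing labelings this is the classical hook-length formula for forest posets, and both the cancellation and the reduction to the previous Lemma are routine.
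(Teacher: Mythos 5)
Your argument is correct, and for the heart of the statement it takes a genuinely different route from the paper. The first equality is handled identically in both: you and the paper obtain $\langle \tdun{$1$}\ldots\tdun{$n$},\FF\rangle=|S_\FF|$ from the lemma $S(\FF,\GG)=S_\FF^{-1}\cap S_\GG$ together with the observation $S_{\tdun{$1$}\ldots\tdun{$n$}}=\S_n$. For the identity $|S_\FF|=n!/\FF!$, however, the paper stays inside the Hopf formalism and inducts on $n$ using the adjunction of the pairing: when $\FF$ is disconnected it evaluates $\langle \tdun{$1$}\ldots\tdun{$n$},\FF_1\FF_2\rangle=\langle\Delta^{op}(\tdun{$1$}\ldots\tdun{$n$}),\FF_1\otimes\FF_2\rangle$, the multinomial coefficient coming from the coproduct of $\tdun{$1$}\ldots\tdun{$n$}$; when $\FF$ is a tree it evaluates $\langle\tdun{$1$}\otimes\tdun{$1$}\ldots\tdun{$n-1$}\hspace{.4cm},\Delta^{op}(\FF)\rangle$, where only the cut just above the root contributes and $\FF!=n\cdot Lea_{\vec{v}}\FF!$. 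You instead prove $|S_\FF|=n!/\FF!$ as a pure counting fact (the hook length formula for forest posets), via the inverse-map reformulation of $S_\FF$ as increasing labelings, with the same induction skeleton (multinomial splitting over components, root removal for a tree); the numerical identities are the same in the two proofs, only the mechanism producing them differs. Your route is more elementary and self-contained, never invoking the Hopf property of the pairing for this part, and the labeling reformulation makes the independence of $|S_\FF|$ from the vertex ordering manifest, a point the paper must establish separately via the action $S_{\sigma.\FF}=\sigma\circ S_\FF$ before it can reorder $\FF$ so as to factor it as $\FF_1\FF_2$ inside $\h_o$. The paper's route, in exchange, exercises the pairing it has just constructed and extracts the combinatorial bookkeeping automatically from the duality between product and coproduct rather than by hand.
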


\begin{proof} Let us fix $n \geq 0$. The symmetric group $\S_n$ naturally acts on $\F_o(n)$ by permutation of the orders of the vertices of the ordered forests.
For example, if $\sigma \in \S_3$:
\begin{equation*}\sigma.\tdtroisun{1}{3}{2}=\hspace{4mm}\tdtroisun{$\sigma(1)$}{$\sigma(3)$}{\hspace{-4mm}$\sigma(2)$}\hspace{1.4mm},\hspace{.5cm}
\sigma.\tdtroisdeux{1}{2}{3}=\tdtroisdeux{$\sigma(1)$}{$\sigma(2)$}{$\sigma(3)$}\hspace{4mm}.
\end{equation*}
Let $\FF \in \F_o(n)$, $\sigma \in \S_n$. For any bijection $f:V(\FF)\longrightarrow \{1,\ldots,n\}$:
\begin{eqnarray*}
f \in S_{\sigma.\FF}&\Longleftrightarrow&\forall i,j \in V(\FF),\: (i\twoheadrightarrow j \mbox{ in }\sigma.\FF) \Longleftrightarrow (f^{-1}(i) \geq f^{-1}(j))\\
&\Longleftrightarrow&\forall i,j \in V(\FF),\: (\sigma(i) \twoheadrightarrow \sigma(j) \mbox{ in }\sigma.\FF) \Longleftrightarrow 
(f^{-1}\circ \sigma(i) \geq f^{-1}\circ \sigma(j))\\
&\Longleftrightarrow&\forall i,j \in V(\FF),\: (i \twoheadrightarrow j \mbox{ in }\FF) \Longleftrightarrow 
(f^{-1}\circ \sigma(i) \geq f^{-1}\circ \sigma(j))\\
&\Longleftrightarrow& \sigma^{-1}\circ f \in S_\FF.
\end{eqnarray*}
So $S_{\sigma.\FF}=\sigma\circ S_\FF$. As a consequence, $|S_\FF|$ does not depend of the order of the vertices of $\FF$, 
but only of the subjacent rooted forest.\\

It is clear that $S_{\tdun{$1$}\ldots \tdun{$n$}}=\S_n$, so $S(\tdun{$1$}\ldots \tdun{$n$},\FF)=\S_n^{-1}\cap S_\FF=S_\FF$. Hence,
$\langle \tdun{$1$}\ldots \tdun{$n$},\FF \rangle=|S_\FF|$.
Let us now prove that $\langle \tdun{$1$}\ldots \tdun{$n$},\FF \rangle=\frac{|\FF|!}{\FF!}$ by induction on the degree $n$ of $\FF$.
If $n=0$, this is obvious. Let us assume the result for any forest of degree $<n$. Two cases can occur.
\begin{itemize}
\item $\FF$ is not connected. As $\langle \tdun{$1$}\ldots \tdun{$n$},\FF \rangle=|S_\FF|$ does not depend of the order of the vertices of $\FF$, we can assume
that $\FF=\FF_1\FF_2$, with $deg(\FF_1)=n_1,deg(\FF_2)=n_2$, $n_1,n_2<n$. Then:
\begin{eqnarray*}
\langle \tdun{$1$}\ldots \tdun{$n$},\FF\rangle&=&\langle \Delta^{op}(\tdun{$1$}\ldots \tdun{$n$}),\FF_1\otimes \FF_2\rangle\\
&=&\sum_{i+j=n}\frac{n!}{i!j!} \langle \tdun{$1$}\ldots \tdun{$i$}\otimes \tdun{$1$}\ldots \tdun{$j$},\FF_1\otimes \FF_2\rangle\\
&=&0+\frac{n!}{n_1!n_2!}\langle \tdun{$1$}\ldots \tdun{$n_1$}\:\otimes \tdun{$1$}\ldots \tdun{$n_2$}\:,\FF_1\otimes \FF_2\rangle\\
&=&\frac{n!}{n_1!n_2!}\frac{n_1!}{\FF_1!}\frac{n_2!}{\FF_2!}\\
&=&\frac{n!}{\FF_1!\FF_2!}\\
&=&\frac{n!}{\FF!}.
\end{eqnarray*}

\item $\FF$ is connected. There is only one admissible cut $\vec{v} \models V(\FF)$, such that $Roo_{\vec{v}}\FF$ is of degree $1$:
$\vec{v}=\{w\in V(\FF)\:\mid\: w \rightarrow r\}$, where $r$ is the root of $\FF$. Then $\FF!=nLea_{\vec{v}}\FF!$. So:
$$\langle \tdun{$1$}\ldots \tdun{$n$},\FF \rangle=\langle\tdun{$1$} \otimes \tdun{$1$}\ldots \tdun{$n-1$}\hspace{.4cm}, \Delta^{op}(\FF)\rangle
=\langle \tdun{$1$},\tdun{$1$}\rangle \langle \tdun{$1$}\ldots \tdun{$n-1$}\hspace{.4cm}, Lea_{\vec{v}}\FF\rangle+0=1\frac{(n-1)!}{Lea_{\vec{v}}\FF!}
=\frac{n!}{\FF!}.$$
\end{itemize}\end{proof}

\section{An isomorphism from ordered forests to parking functions}

\subsection{Other structures on $\h_p^\D$}

\label{s4.1} If $F,G$ are two non-empty planar forests, eventually decorated, we denote by $F \nwarrow G$ the planar forest, eventually decorated,
obtained by grafting $G$ on the leaf of $F$ that is at most on the right. This defines a product $\nwarrow$ on $(\h_p^\D)_+$, the augmentation ideal of $\h_p^\D$.  \\

{\bf Examples.}  In the non-decorated case:
$$\begin{array}{|rclcl|rclcl|rclcl|rclcl|}
\hline \tun\tun\tun &\nwarrow& \tdeux&=&\tun\tun\ttroisdeux &\tdeux &\nwarrow& \tun \tun \tun&=&\tcinqdix&
\tun\tun&\nwarrow&\tun\tun\tun&=&\tun \tquatreun& \tun\tun\tun&\nwarrow&\tun\tun&=&\tun\tun\ttroisun\\
\tun\tdeux &\nwarrow& \tdeux&=&\tun\tquatrecinq& \tdeux &\nwarrow& \tun \tdeux&=&\tcinqdouze&
\tun\tun&\nwarrow&\tun\tdeux&=&\tun\tquatretrois& \tun\tdeux&\nwarrow&\tun\tun&=&\tun\tquatrequatre\\
\tdeux\tun &\nwarrow& \tdeux&=&\tdeux\ttroisdeux& \tdeux &\nwarrow& \tdeux\tun &=&\tcinqonze&
\tun\tun&\nwarrow&\tdeux\tun&=&\tun\tquatredeux& \tdeux\tun&\nwarrow&\tun\tun&=&\tdeux\ttroisun\\
\ttroisun &\nwarrow& \tdeux&=&\tcinqneuf& \tdeux &\nwarrow& \ttroisun&=&\tcinqtreize&
\tun\tun&\nwarrow&\ttroisun&=&\tun\tquatrequatre& \ttroisun&\nwarrow&\tun\tun&=&\tcinqsept\\
\ttroisdeux &\nwarrow& \tdeux&=&\tcinqquatorze& \tdeux &\nwarrow& \ttroisdeux&=&\tcinqquatorze&
\tun\tun&\nwarrow& \ttroisdeux&=&\tun\tquatrecinq& \ttroisdeux&\nwarrow&\tun\tun&=&\tcinqtreize\\
\hline \end{array}$$

The following properties are easily verified for $x,y,z$ non-empty forests:

\begin{lemma}
For all $x,y,z \in (\h_p^\D)_+$:
$$\left\{\begin{array}{rcl}
(x \nwarrow y)\nwarrow z&=&x \nwarrow (y \nwarrow z),\\
(xy)\nwarrow z&=&x (y \nwarrow z).
\end{array}\right.$$
\end{lemma}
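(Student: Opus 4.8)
The plan is to verify both identities directly from the definition of the grafting product $\nwarrow$, reducing to the case of forests since everything is bilinear. Recall that $F \nwarrow G$ grafts the entire forest $G$ onto the rightmost leaf of $F$; concretely, if $\ell$ denotes the rightmost leaf of $F$, then every root of $G$ becomes a new child of $\ell$, placed to the right of the existing children of $\ell$. The key observation driving both proofs is the following \emph{locality} principle: the result of any grafting $\nwarrow$ depends on the target forest $F$ only through the identity of its rightmost leaf $\ell$, and the new edges created are always attached at $\ell$.

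For the associativity identity $(x \nwarrow y)\nwarrow z = x \nwarrow (y \nwarrow z)$, I would argue as follows. Write $x = F$, $y = G$, $z = H$ non-empty forests. In the left-hand side, we first graft $G$ onto the rightmost leaf $\ell_F$ of $F$; the rightmost leaf of the resulting forest is precisely the rightmost leaf $\ell_G$ of $G$, since $G$ is grafted as the rightmost branch. Then grafting $H$ attaches its roots as children of $\ell_G$. On the right-hand side, we first form $G \nwarrow H$, whose rightmost leaf is $\ell_G$ as well (for the same reason, $H$ being grafted at the rightmost leaf of $G$), and then graft this onto $\ell_F$. In both cases, the final forest is $F$ with $G$ grafted at $\ell_F$ and $H$ grafted at $\ell_G$, with all left-to-right orders preserved; hence the two results coincide. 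The decorations are carried along unchanged, so the decorated case is identical.

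For the second identity $(xy)\nwarrow z = x(y \nwarrow z)$, I would again take $x=F$, $y=G$, $z=H$ forests and use that the product is concatenation. The rightmost leaf of the concatenation $FG$ is the rightmost leaf $\ell_G$ of $G$ (the last factor), not of $F$. Therefore grafting $H$ onto $FG$ affects only the $G$-part, attaching $H$ at $\ell_G$, and leaves $F$ untouched sitting to the left. This is exactly the forest $F$ concatenated with $G \nwarrow H$, which is the right-hand side. Thus the two sides agree as decorated planar forests.

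The only genuinely delicate point, and the one I would state carefully, is the bookkeeping of the planar (left-to-right) orders and the precise identification of the rightmost leaf after grafting; once it is observed that grafting at the rightmost leaf always produces a new rightmost leaf equal to the rightmost leaf of the grafted forest, both identities become transparent. I expect no serious obstacle beyond making this ``rightmost leaf tracks the last grafted forest'' claim explicit, which is why the statement is asserted to be ``easily verified.'' A single generic picture illustrating the two graftings in each identity would suffice to convince the reader, so I would keep the written argument short and structural rather than case-by-case.
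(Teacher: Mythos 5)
Your proof is correct and takes the approach the paper itself intends: the paper offers no written argument (it states the identities are ``easily verified'' for non-empty forests), and your direct verification on forests, organised around the observation that the rightmost leaf of both $F\nwarrow G$ and $FG$ is the rightmost leaf of $G$, is precisely the omitted check. The one point worth keeping explicit in a written version is exactly the one you flag: that the rightmost leaf lies at the end of the rightmost spine of the last tree, so grafting at it extends that spine into the grafted forest.
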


We recover the definition of \cite{Loday2}:

\begin{defi}
A  {\it duplicial algebra} is a triple $(A,.,\nwarrow)$, where $A$ is a vector space and $.,\nwarrow:A\otimes A \longrightarrow A$, with the following axioms:
for all $x,y,z \in A$,
\begin{equation}\label{E1}\left\{\begin{array}{rcl}
(xy)z&=&x(yz),\\
(x \nwarrow y)\nwarrow z&=&x \nwarrow (y \nwarrow z),\\
(xy)\nwarrow z&=&x (y \nwarrow z).
\end{array}\right. \end{equation}
\end{defi}

{\bf Remark.} If $(A,m,\nwarrow)$ is a duplicial algebra, then $A^{op}=(A,m^{op},\nwarrow^{op})$ is a $\P_\nearrow$-algebra, as defined in \cite{Foissy}.
In particular, for $A=\h_p$, the $\P_\nearrow$-algebra $\h_p^{op}$ is isomorphic to $(\h_p,m,\nearrow)$, where $\FF \nearrow \GG$ is defined in \cite{Foissy}
by grafting $\FF$ on the leaf at most on the left of $\GG$. An explicit isomorphism is given by sending a planar forets $\FF$ to its image by a vertical symmetry. \\

Here is an alternative description of the free duplicial algebras:

\begin{prop}
For all set $\D$, $(\h_p^\D)_+$ is the free duplicial algebra generated by the elements $\tdun{$d$}$'s, $d\in \D$.
\end{prop}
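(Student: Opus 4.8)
The plan is to show that $(\h_p^\D)_+$ satisfies the universal property of the free duplicial algebra on the generating set $\{\tdun{d} : d \in \D\}$: given any duplicial algebra $(A,\cdot,\nwarrow)$ and any map $g : \D \longrightarrow A$, there exists a unique duplicial algebra morphism $\phi : (\h_p^\D)_+ \longrightarrow A$ with $\phi(\tdun{d}) = g(d)$ for all $d \in \D$. The existence of such $\phi$ amounts to defining $\phi$ on all decorated planar forests by recursion and checking it respects both products; uniqueness will follow because the generators together with the two operations $m$ and $\nwarrow$ suffice to build every forest.

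First I would establish the key structural fact that drives the recursion: every non-empty decorated planar forest $\FF$ can be written uniquely either as a concatenation $\FF = \FF_1 \FF_2$ of two non-empty forests (when $\FF$ is not a single tree), or, when $\FF$ is a single rooted tree $t$, in the form $t = \tdun{d} \nwarrow \GG$ where $d$ is the decoration of the root and $\GG$ is the (possibly empty, in which case we interpret $t$ as the generator itself) forest hanging above the root, read from left to right. Here the crucial observation is that $\nwarrow$ grafts onto the \emph{rightmost leaf}, so one must verify that grafting $\GG$ onto the rightmost leaf of the single-vertex tree $\tdun{d}$ reproduces exactly the tree $t$; more care is needed to get a genuinely unique normal form for arbitrary trees, so I would instead set up a decomposition that strips off the root's children one at a time from the right. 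The cleanest route is to argue by induction on the degree that the sub-duplicial-algebra of $(\h_p^\D)_+$ generated by the $\tdun{d}$'s is all of $(\h_p^\D)_+$, which simultaneously proves uniqueness of $\phi$.

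For existence, I would define $\phi$ inductively on the degree, setting $\phi(\tdun{d}) = g(d)$, and extending via $\phi(\FF_1\FF_2) = \phi(\FF_1)\cdot\phi(\FF_2)$ and $\phi(\tdun{d}\nwarrow \GG) = g(d) \nwarrow \phi(\GG)$ according to the decomposition above. The content of the proof is then showing that $\phi$ is well defined (independent of the choices made when a forest admits several decompositions) and that it is genuinely a morphism for both operations. Well-definedness and the morphism property for $m$ follow from associativity of concatenation together with the duplicial relation $(xy)\nwarrow z = x(y\nwarrow z)$, which is exactly the axiom that lets one commute a concatenation past a right-grafting; compatibility for $\nwarrow$ uses associativity of $\nwarrow$. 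In each case the identities in $A$ needed to match the recursion are precisely the three axioms of equation (\ref{E1}), so the verification reduces to a bookkeeping argument organized by which of the three cases the outer operation falls into.

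The main obstacle I anticipate is controlling the interaction between the two decompositions — concatenation and root-grafting — so that $\phi$ is unambiguous. Concretely, a tree of the form $\tdun{d}\nwarrow(\GG_1\GG_2)$ could a priori be re-grouped, and one must confirm that the relations $(x\nwarrow y)\nwarrow z = x\nwarrow(y\nwarrow z)$ and $(xy)\nwarrow z = x(y\nwarrow z)$ force all parenthesizations to agree in $A$. The careful point is that $\nwarrow$ always targets the rightmost leaf, so writing $\GG = \GG'\,\tau$ with $\tau$ the rightmost tree, grafting $\GG$ onto a leaf is not simply $(\,\cdot\nwarrow \GG')\nwarrow\tau$; one must track exactly where the rightmost leaf migrates after a grafting. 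I would handle this by choosing the canonical decomposition — peel the rightmost tree of the top forest first — and prove by a double induction (on degree and on the number of roots) that this canonical $\phi$ coincides with the value obtained from any other legal decomposition, invoking the duplicial axioms at each reduction step. Once well-definedness is secured, the morphism and uniqueness statements are immediate.
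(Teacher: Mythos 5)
Your proposal is correct and is essentially the paper's own proof: the paper defines $\phi$ by the same canonical decomposition — $\phi(\tdun{$d$})=g(d)$, $\phi(t_1\cdots t_k)=\phi(t_1)\cdots\phi(t_k)$, and $\phi(B^+(F))=g(d)\nwarrow\phi(F)$ for a tree with root decorated by $d$ and forest $F$ above it — and then checks the $\nwarrow$-morphism property by induction on the degree of the left factor, using exactly the duplicial axioms to handle the migration of the rightmost leaf (writing $x\nwarrow y=t_1\cdots t_{k-1}B^+(F\nwarrow y)$ when $x=t_1\cdots t_{k-1}B^+(F)$). Uniqueness follows just as you say, because the generators together with the two products reach every forest.
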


\begin{proof} In order to simplify the proof, we only treat here the case where $\D$ is reduced to a single element, 
that is to say we work with non-decorated planar forests. The general proof is very similar. Let $A$ be a duplicial algebra and let $a \in A$. 
Let us prove there exists a unique morphism of duplicial algebras $\phi:(\h_p)_+\longrightarrow A$, such that $\phi(\tun)=a$. 
We define $\phi(\FF)$ for any non-empty planar forest $\FF$ inductively on the degree of $\FF$ by:
$$\left\{\begin{array}{rcl}
\phi(\tun)&=&a,\\
\phi(t_1\ldots t_k)&=&\phi(t_1)\ldots \phi(t_k) \mbox{ if }k\geq 2,\\
\phi(B^+(F))&=&a\nwarrow \phi(F). 
\end{array}\right.$$
As the product of $A$ is associative, this is perfectly defined. This map is linearly extended into a map $\phi:(\h_p)_+\longrightarrow A$.
Let us show it is a morphism of duplicial algebras. By the second point, $\phi(xy)=\phi(x)\phi(y)$
for any forests $x,y \in (\h_p)_+$. Let $x,y$ be two non-empty forests. Let us prove that $\phi(x\nwarrow y)=\phi(x) \nwarrow \phi(y)$ by induction on 
$n=deg(x)$. If $n=1$, then $x=\tun$, so:
$$\phi(x \nwarrow y)=\phi(B^+(y))=a \nwarrow \phi(y)=\phi(x)\nwarrow \phi(y).$$
Let us assume the result for any forest of weight $<n$. We put $x=t_1\ldots t_k$, $t_k=B^+(F)$. Then, using the induction hypothesis on $F$:
\begin{eqnarray*}
\phi(x \nwarrow y)&=&\phi(t_1\ldots t_{k-1} B^+(F \nwarrow y))\\
&=&\phi(t_1)\ldots \phi(t_{k-1}) (a \nwarrow \phi((F \nwarrow y)))\\
&=&\phi(t_1)\ldots \phi(t_{k-1}) (a \nwarrow (\phi(F) \nwarrow \phi(y)))\\
&=&\phi(t_1)\ldots \phi(t_{k-1}) ((a \nwarrow \phi(F)) \nwarrow \phi(y))\\
&=&\phi(t_1)\ldots \phi(t_{k-1}) (\phi(t_k) \nwarrow \phi(y))\\
&=&(\phi(t_1)\ldots \phi(t_{k-1})\phi(t_k)) \nwarrow \phi(y))\\
&=&\phi(x) \nwarrow \phi(y).
\end{eqnarray*}
We use the convention $1 \nwarrow y=y$, if $t_k=\tun$. So $\phi$ is a morphism of duplicial algebras. \\

Let $\phi':(\h_p)_+\longrightarrow A$ be another morphism of duplicial algebras such that $\phi'(\tun)=a$. 
Then for any planar trees $t_1,\ldots,t_k$, $\phi'(t_1\ldots t_k)=\phi'(t_1)\ldots \phi'(t_k)$.
For any planar forest $F$, $\phi'(B^+(F))=\phi'(\tun \nwarrow F)=a \nwarrow \phi'(F)$. So $\phi=\phi'$.  \end{proof}\\

\begin{defi}
For any non-empty planar forest $\FF$, let $r_\FF$ be the leaf of $\FF$ that is at most on the right. We put:
$$\tdelta_\prec(\FF)=\sum_{\substack{\vec{v}\mmodels V(\FF) \\ r_\FF \in Lea_{\vec{v}}\FF}} Lea_{\vec{v}} \FF \otimes Roo_{\vec{v}} \FF,\hspace{.5cm}
\tdelta_\succ(\FF)=\sum_{\substack{\vec{v}\mmodels V(\FF) \\ r_\FF \in Roo_{\vec{v}}\FF}} Lea_{\vec{v}} \FF \otimes Roo_{\vec{v}} \FF.$$
\end{defi}

Note that $\tdelta_\prec+\tdelta_\succ=\tdelta$. 

\begin{lemma}
For any $x \in (\h_p^\D)_+$:
\begin{equation} \label{E2} \left\{\begin{array}{rcl}
(\tdelta_\prec\otimes Id)\circ \tdelta_\prec(x)&=&(Id \otimes \tdelta)\circ \tdelta_\prec(x),\\
(\tdelta_\succ\otimes Id)\circ \tdelta_\prec(x)&=&(Id \otimes \tdelta_\prec)\circ \tdelta_\succ(x),\\
(\tdelta\otimes Id)\circ \tdelta_\succ(x)&=&(Id \otimes \tdelta_\succ)\circ \tdelta_\succ(x).
\end{array}\right. \end{equation}
In other words, $(\h_p^\D)_+$ is a dendriform coalgebra.
\end{lemma}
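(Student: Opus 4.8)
The plan is to reduce all three identities of (\ref{E2}) to one bookkeeping statement about the position of a distinguished leaf in a double cut. First I would recall that iterating $\tdelta$ amounts to choosing two nested admissible cuts. It is convenient to identify an admissible cut $\vec{v}\models V(\FF)$ with the upper set $L=Lea_{\vec{v}}\FF$, i.e. the subset of $V(\FF)$ such that $w\twoheadrightarrow v$ and $v\in L$ imply $w\in L$; its complement is the lower set $R=Roo_{\vec{v}}\FF$, and every upper set arises this way (the minimal elements of $L$ form the cut). Under this dictionary both $(\tdelta\otimes Id)\circ\tdelta$ and $(Id\otimes\tdelta)\circ\tdelta$ sum over the same data, namely an ordered partition $V(\FF)=A\sqcup B\sqcup C$ into three nonempty pieces for which $A$ and $A\cup B$ are both upper sets, the three tensor factors being the induced forests on $A$, $B$, $C$; this is just the coassociativity of $\tdelta$ on $(\h_p^\D)_+$.

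The crucial input is a lemma on the rightmost leaf. For an upper set $L$ with $r_\FF\in L$, the rightmost leaf of the forest induced on $L$ is again $r_\FF$; and for a lower set $R$ with $r_\FF\in R$, the rightmost leaf of the forest induced on $R$ is again $r_\FF$. I would prove this using the rightmost branch of $\FF$, namely the path obtained by starting at the rightmost root and repeatedly descending to the rightmost child, which terminates at $r_\FF$. If $L$ is an upper set containing $r_\FF$, then $L$ meets this branch in a top segment whose lowest vertex is a root of the forest on $L$; since each branch vertex is the rightmost child of its predecessor, it has no sibling to its right, so the rightmost descent inside the forest on $L$ follows this segment and again ends at $r_\FF$. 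The lower-set case is the same argument read from the roots: a lower set contains all ancestors of its elements, so the full rightmost branch survives in the forest on $R$, and again no right sibling along it can have been removed, so the rightmost descent is unchanged.

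Granting this lemma, each identity becomes a tautology recording which of $A$, $B$, $C$ contains $r_\FF$. For the first identity, the right-hand side $(Id\otimes\tdelta)\circ\tdelta_\prec$ sums over three-piece partitions with $r_\FF\in A$, while the left-hand side $(\tdelta_\prec\otimes Id)\circ\tdelta_\prec$ sums over those with $r_\FF\in A\cup B$ and, after cutting the upper part $L=A\cup B$, with the rightmost leaf of the forest on $L$ lying in $A$; by the lemma that rightmost leaf is $r_\FF$, so the condition collapses to $r_\FF\in A$, matching the right-hand side. The third identity is the mirror image, both sides summing over partitions with $r_\FF\in C$ and using the lower-set half of the lemma. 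The middle identity is the mixed case: its left-hand side $(\tdelta_\succ\otimes Id)\circ\tdelta_\prec$ cuts $\FF$ with $r_\FF$ in the upper part and then places $r_\FF$, which by the lemma is the rightmost leaf of that upper part, in the right sub-factor $B$, whereas its right-hand side $(Id\otimes\tdelta_\prec)\circ\tdelta_\succ$ cuts $\FF$ with $r_\FF$ in the lower part and places $r_\FF$ in the left sub-factor $B$; both therefore sum over partitions with $r_\FF\in B$.

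The main obstacle is genuinely only the lemma on preservation of the rightmost leaf under restriction to an upper or lower set; once it is in hand, the three dendriform-coalgebra axioms are exactly the three ways the distinguished leaf $r_\FF$ can be distributed among the three pieces of a double cut. I would also note the harmless bookkeeping that $\tdelta$, $\tdelta_\prec$ and $\tdelta_\succ$ all range over proper cuts ($\mmodels$), so that the pieces $A$, $B$, $C$ are nonempty and no degenerate terms intervene, together with the already observed relation $\tdelta=\tdelta_\prec+\tdelta_\succ$, which is what allows the unrefined $\tdelta$ in one tensor slot to absorb both positions of the sub-leaf.
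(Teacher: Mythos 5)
Your proposal is correct and takes essentially the same route as the paper: both use coassociativity of $\tdelta$ to write the iterated coproduct as a single sum of triples $x^{(1)}\otimes x^{(2)}\otimes x^{(3)}$ coming from double admissible cuts, and then identify each of the three dendriform identities with the sub-sum of terms where the rightmost leaf $r_x$ lies in the first, second, or third piece respectively. The only difference is one of explicitness: you isolate and prove the key fact that the paper leaves implicit, namely that if $r_\FF$ belongs to $Lea_{\vec{v}}\FF$ (resp.\ $Roo_{\vec{v}}\FF$) then it is again the rightmost leaf of that subforest.
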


\begin{proof} It is enough to prove this statement if $x$ is a non-empty forest. We put, as $\tdelta$ is coassociative:
$$(\tdelta \otimes Id) \circ \tdelta(x)=(Id \otimes \tdelta)\circ \tdelta(x)=\sum x^{(1)} \otimes x^{(2)} \otimes x^{(3)},$$
where the $x^{(1)}$, $x^{(2)}$, $x^{(3)}$ are subforests of $x$. Then:
$$\left\{\begin{array}{rcccl}
(\tdelta_\prec\otimes Id)\circ \tdelta_\prec(x)&=&(Id \otimes \tdelta)\circ \tdelta_\prec(x)
&=&\displaystyle \sum_{r_x \in x^{(1)}} x^{(1)} \otimes x^{(2)}\otimes x^{(3)},\\
(\tdelta_\succ\otimes Id)\circ \tdelta_\prec(x)&=&(Id \otimes \tdelta_\prec)\circ \tdelta_\succ(x)
&=&\displaystyle \sum_{r_x \in x^{(2)}} x^{(1)} \otimes x^{(2)}\otimes x^{(3)},\\
(\tdelta\otimes Id)\circ \tdelta_\succ(x)&=&(Id \otimes \tdelta_\succ)\circ \tdelta_\succ(x)
&=&\displaystyle \sum_{r_x \in x^{(3)}} x^{(1)} \otimes x^{(2)}\otimes x^{(3)}.
\end{array}\right.$$
So $(\h_p^\D)_+$ is a dendriform coalgebra. \end{proof}\\

{\bf Notations}. \begin{enumerate}
\item If $(A,\tdelta_\prec,\tdelta_\succ)$ is a dendriform coalgebra, we denote $Prim_{tot}(A)=Ker(\tdelta_\prec)\cap Ker(\tdelta_\succ)$.
\item Let $(A,\tdelta_\prec,\tdelta_\succ)$ be a dendriform coalgebra. We shall use the following sweedler notations: for any $a\in A$,
$\tdelta(a)=a'\otimes a''$, $\tdelta_\prec(a)=a'_\prec \otimes a''_\prec$ and $\tdelta_\succ(a)=a'_\succ \otimes a''_\succ$.
\end{enumerate}

\begin{prop} \label{19}
The dendriform coalgebra $(\h_p^\D)_+$ is freely cogenerated by the elements $\tdun{$d$}$'s, $d\in \D$.
As a consequence, $Prim_{tot}\left((\h_p^\D)_+\right)=Vect(\tdun{$d$},\:d\in \D)$.
\end{prop}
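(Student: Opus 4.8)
The statement to establish is that $\big((\h_p^\D)_+,\tdelta_\prec,\tdelta_\succ\big)$ satisfies the universal property of the cofree dendriform coalgebra cogenerated by $V:=Vect(\tdun{$d$},\:d\in\D)$. Let $\pi:(\h_p^\D)_+\to V$ be the projection onto the degree-one part (the span of the one-vertex trees), annihilating all forests of degree $\geq 2$. The plan is to prove: for every conilpotent dendriform coalgebra $(D,\delta_\prec,\delta_\succ)$ (each element killed by all sufficiently long iterations of $\delta_\prec,\delta_\succ$) and every linear map $f:D\to V$, there is a unique morphism of dendriform coalgebras $\Phi:D\to(\h_p^\D)_+$ with $\pi\circ\Phi=f$. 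Then the final clause on $Prim_{tot}$ drops out at the end. One cannot shortcut this by dualising the fact that $(\h_p^\D)^*_+$ is a free dendriform algebra: as the Remark following that description points out, the duals of $\prec,\succ$ are \emph{not} $\tdelta_\prec,\tdelta_\succ$, so the argument must be run directly with $\tdelta_\prec,\tdelta_\succ$.

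The construction of $\Phi$ goes by induction on conilpotency degree, and the inductive step rests on a reconstruction lemma which is the real content: for $n\geq 2$ the linear map
$$\rho_n:(\h_p^\D)_+(n)\longrightarrow \Big(\bigoplus_{i+j=n}(\h_p^\D)_+(i)\otimes(\h_p^\D)_+(j)\Big)^{2},\qquad y\longmapsto\big(\tdelta_\prec(y),\tdelta_\succ(y)\big)$$
is injective, with image exactly the set of pairs satisfying the homogeneous degree-$n$ part of equations (\ref{E2}). Granting this, one sets $\Phi$ on $Prim_{tot}(D)$ equal to $f$ (which takes values in $V\subseteq(\h_p^\D)_+$), and for $x$ of higher conilpotency degree defines $\Phi(x)$ to be the unique preimage under $\rho_n$ of the pair $\big((\Phi\otimes\Phi)\delta_\prec(x),(\Phi\otimes\Phi)\delta_\succ(x)\big)$, already defined by induction. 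That this pair lies in the image of $\rho_n$ is forced by applying $\Phi^{\otimes 3}$ together with the induction hypothesis: the two sides of each equation in (\ref{E2}) for the pair reduce to $\Phi^{\otimes 3}$ applied to the corresponding equations (\ref{E2}) holding in $D$. By construction $\Phi$ intertwines $\delta_\prec$ with $\tdelta_\prec$ and $\delta_\succ$ with $\tdelta_\succ$, hence is a morphism of dendriform coalgebras with $\pi\Phi=f$; uniqueness follows since any such morphism must obey the same recursion and $\rho_n$ is injective.

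The main obstacle is the reconstruction lemma, where the combinatorics of the rightmost leaf enters. I would prove injectivity by isolating, inside $\tdelta_\prec(y)$ and $\tdelta_\succ(y)$, the components whose left tensor factor is a single vertex: these are precisely the cuts of a single leaf, the cut of the rightmost leaf $r_\FF$ feeding $\tdelta_\prec$ and the cuts of the remaining leaves feeding $\tdelta_\succ$; reading off these components recovers enough coefficients of $y$ to force $y=0$ by induction on the degree, so that there is no totally primitive element of degree $\geq 2$. For the surjectivity (realizability) half I would exploit the recursive structure of planar forests — a nonempty forest is either a single tree $B^+(\GG)$ or a concatenation $\FF' t$ whose rightmost leaf lies in the last tree $t$ — to produce a bijection between planar forests of degree $n$ and the canonical iterated-$\tdelta_\prec/\tdelta_\succ$ patterns compatible with (\ref{E2}); equivalently, $\Phi(x)$ admits the explicit form $\sum_\FF c_\FF(x)\,\FF$, where $c_\FF(x)$ is obtained by iterating $\delta_\prec,\delta_\succ$ on $x$ along the shape of $\FF$ and applying $f$ to each resulting factor. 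The coherence of this labelling with the dendriform-coalgebra axioms is exactly equation (\ref{E2}), and the formal series recorded earlier for $\h_p^\D$ (the quadratic relating $f_{\h_p^\D}$ and $f_\D$, which is the functional equation of a cofree dendriform coalgebra) furnishes a useful dimension check.

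Finally the consequence. The inclusion $V\subseteq Prim_{tot}\big((\h_p^\D)_+\big)$ is immediate, since a one-vertex tree admits no proper admissible cut, so $\tdelta_\prec$ and $\tdelta_\succ$ vanish on it. Conversely, let $x$ be totally primitive and write $x=\sum_n x_n$ in homogeneous components; as $\tdelta_\prec,\tdelta_\succ$ are graded, each $x_n$ is again totally primitive, and the injectivity of $\rho_n$ for $n\geq 2$ forces $x_n=0$ in those degrees, leaving $x=x_1\in V$. Note that only the injectivity part of the reconstruction lemma is needed here. Hence $Prim_{tot}\big((\h_p^\D)_+\big)=Vect(\tdun{$d$},\:d\in\D)$.
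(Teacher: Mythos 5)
Your reduction of the proposition to a ``reconstruction lemma'' --- injectivity of $\rho_n=(\tdelta_\prec,\tdelta_\succ)$ in each degree $n\geq 2$, together with the characterisation of its image by the degree-$n$ part of equations (\ref{E2}) --- is a legitimate framework, and the $Prim_{tot}$ statement would indeed follow from the injectivity half alone. The gap is that your proposed proof of injectivity fails: the components of $\tdelta_\prec(y)$ and $\tdelta_\succ(y)$ whose left tensor factor is a single vertex do \emph{not} determine $y$, and no induction on the degree can repair this, because the map ``read off all single-leaf cuts'' already has a nonzero kernel in degree $3$. Concretely, in the non-decorated case take
$$z=\tun\tun\tun-\tun\tdeux-2\,\tdeux\tun+\ttroisun+\ttroisdeux\:.$$
Listing all admissible cuts gives $\tdelta_\prec(z)=0$ and $\tdelta_\succ(z)=\tun\tun\otimes\tun-2\,\tdeux\otimes\tun$. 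So every component of either coproduct with left factor $\tun$ vanishes (indeed the whole of $\tdelta_\prec(z)$ vanishes), yet $z\neq 0$: only the component of $\tdelta_\succ(z)$ whose left factor has degree $2$ detects $z$. Any correct proof of injectivity must therefore engage the cuts with large $Lea$ part, not just the single-leaf ones. Note also that injectivity of $\rho_n$ for all $n\geq 2$ is, by graded duality, exactly the statement that $(\h_p^\D)_+^*$ is generated in degree one as a dendriform algebra, so you have not circumvented the crux of the proposition, only restated it; and the surjectivity (image) half of your lemma is likewise only gestured at.

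This connects to your methodological claim, which misreads the Remark of section 1.2 and steered you away from the shorter route. That Remark only says that the dendriform products on $(\h_p^\D)_+^*$ recalled there (the splitting of the grafting product taken from \cite{Foissy2}) are not dual to $\tdelta_\prec,\tdelta_\succ$; it does not say that dualisation is unavailable. The paper's proof of proposition \ref{19} dualises with the \emph{correct} structure: the products dual to $\tdelta_\prec,\tdelta_\succ$ are computed explicitly, namely $Z_F\prec Z_G$ (resp.\ $Z_F\succ Z_G$) is the sum of $Z_H$ over graftings $H$ of $F$ on $G$ with $r_H=r_F$ (resp.\ $r_H=r_G$); a two-step induction then shows that this dendriform algebra is generated by $Z_{\tun}$ (the case $|\D|=1$ being treated for simplicity), and generation is upgraded to freeness by comparing Poincar\'e--Hilbert series with those of the free dendriform algebra. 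If you wish to keep your direct, coalgebra-side universal-property argument, you must supply an actual proof of the reconstruction lemma, for instance by dualising its injectivity half into precisely this generation statement.
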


\begin{proof} It is equivalent to prove that the graded dual $(\h_p^\D)_+^*$ is the free dendriform algebra generated by the elements $Z_{\tdun{$d$}}$'s,
$d\in \D$.  In order to simplify the proof, we only treat here the case where $\D$ is reduced to a single element, 
that is to say we work with non-decorated planar forests. Comparing the Poincaré-Hilbert formal series, it is enough to prove that $(\h_p)_+^*$ 
is generated by $Z_{\tun}$. For any forests $F,G$, we have:
$$Z_F \prec Z_G=\sum_{\substack{\mbox{\scriptsize $H$ grafting of $F$ on $G$}\\ r_H=r_F}}Z_H,\hspace{.5cm}
Z_F \succ Z_G=\sum_{\substack{\mbox{\scriptsize $H$ grafting of $F$ on $G$}\\ r_H=r_G}}Z_H.$$
In particular, $Z_F \succ Z_{\tun}=Z_{F\tun}$.
 
Let $A$ be the (associative) subalgebra of $(\h_p)^*_+$ generated by the elements $Z_{F\tun}$, $F$ planar forest. Let us prove that $Z_G \in A$
for any non-empty planar forest $G$ by induction on $n=deg(G)$. If $n=1$, $Z_G=Z_{\tun} \in A$. Let us assume that any $Z_H \in A$, if $deg(H)<n$.
If $deg(G)=n$, we put $G=t_1\ldots t_{k-1}B^+(H)$. We proceed by induction on $deg(H)=l$.
If $l=0$, then $G=t_1\ldots t_{k-1}\tun$, so $Z_G \in A$. If $l\geq 1$, then:
$$Z_H Z_{t_1\ldots t_{k-1}\tun}=Z_G+R,$$
where $R$ is a sum of $Z_{F'}$, with $F'$ of weight $n$, of the form $F'=t'_1\ldots t'_r B^+(H')$, $deg(H')<l$. By the induction hypothesis on $l$,
$R \in A$. By the induction hypothesis on $n$, $Z_H \in A$; moreover, $Z_{t_1\ldots t_{k-1}\tun}\in A$. So $Z_G \in A$.\\

Let $B$ the dendriform subalgebra of $(\h_p)^*_+$ generated by $Z_{\tun}$. Let us prove that $B=(\h_p)^*_+$. 
By the first point, it is enough to prove that for any planar forests $F_1,\ldots,F_k$, $x=Z_{F_1\tun}\ldots Z_{F_k \tun} \in B$. 
We proceed by induction on $deg(x)=deg(F_1)+\ldots+deg(F_k)+k$. If $n=1$, then $x=Z_{\tun} \in B$.
If $n \geq 2$, then the induction hypothesis gives $Z_{F_1},\ldots,Z_{F_k} \in B$. Then:
$$x=(Z_{F_1} \succ Z_{\tun})\ldots (Z_{F_k} \succ Z_{\tun}) \in B.$$
So $(\h_p)^*_+$ is generated by $Z_{\tun}$. \end{proof}

\begin{prop}\label{20}
\begin{enumerate}
\item Let $x,y\in (\h_p^\D)_+$. Then:
\begin{equation}\label{E3}\left\{\begin{array}{rcl}
\tdelta_\prec(xy)&=&y \otimes x+x'y\otimes x''+xy'_\prec \otimes y''_\prec+y'_\prec \otimes x y''_\prec+x'y'_\prec \otimes x''y''_\prec,\\[2mm]
\tdelta_\succ(xy)&=&x \otimes y+x'\otimes x''y+x y'_\succ\otimes y''_\succ+y'_\succ \otimes x y''_\succ+x'y'_\succ \otimes x''y''_\succ.
\end{array}\right. \end{equation}
In other words, $(\h_p^\D)_+$ is a codendriform bialgebra.

\item Let $x,y \in (\h_p^\D)_+$. Then:
\begin{equation}\label{E4}\left\{\begin{array}{rcl}
\tdelta_\prec(x \nwarrow y)&=&y \otimes x+y'_\prec \otimes x \nwarrow y''_\prec+x'_\prec \nwarrow y \otimes x''_\prec\\[1mm]
&&+x'_\succ y \otimes x''_\succ+x'_\succ y'_\prec \otimes x''_\succ\nwarrow x''_\prec,\\[2mm]
\tdelta_\succ(x \nwarrow y)&=&y'_\succ \otimes x \nwarrow y''_\succ+x'_\succ \otimes x''_\succ \nwarrow y
+x'_\succ y'_\succ \otimes x''_\succ \nwarrow y''_\succ.
\end{array}\right. \end{equation} \end{enumerate} \end{prop}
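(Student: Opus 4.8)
The plan is to treat the two identities (\ref{E3}) and (\ref{E4}) separately, in both cases reducing to a classification of the admissible cuts of the relevant forest and then sorting the resulting terms according to the position of the rightmost leaf.

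For (\ref{E3}) I would first use that $\h_p^\D$ is a genuine bialgebra for the concatenation product, so that $\Delta(xy)=\Delta(x)\Delta(y)$; writing $\Delta(x)=x\otimes 1+1\otimes x+\tdelta(x)$ and similarly for $y$, expanding the product in $\h_p^\D\otimes\h_p^\D$ and subtracting $xy\otimes 1+1\otimes xy$ yields a seven-term expression for $\tdelta(xy)$ in terms of $x,y$, $\tdelta(x)=x'\otimes x''$ and $\tdelta(y)=y'\otimes y''$. The key geometric observation is that, since the product is concatenation and planar forests are read from left to right, the rightmost leaf of $xy$ is the rightmost leaf of $y$, i.e. $r_{xy}=r_y$; in particular $r_x$ never matters. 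Hence, to split $\tdelta(xy)$ into $\tdelta_\prec$ and $\tdelta_\succ$, I only need to decide for each of the seven terms whether $r_y$ lands in the left or the right tensor factor: this is governed by the splitting $\tdelta(y)=\tdelta_\prec(y)+\tdelta_\succ(y)$ for the terms involving $\tdelta(y)$, and by whether the whole copy of $y$ sits on the left or on the right for the others. Collecting terms reproduces exactly the two lines of (\ref{E3}).

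For (\ref{E4}), set $z=x\nwarrow y$ and let $\ell=r_x$ be the rightmost leaf of $x$, so that $z$ is $x$ with the roots of $y$ grafted as the children of $\ell$. Again the rightmost leaf of $z$ is $r_z=r_y$, so $\tdelta_\prec$ collects the cuts with $r_y\in Lea$ and $\tdelta_\succ$ those with $r_y\in Roo$. I would classify an admissible cut $\vec{v}\mmodels V(z)$ by its traces $\vec{v}_x=\vec{v}\cap V(x)$ and $\vec{v}_y=\vec{v}\cap V(y)$. The crucial coupling is the admissibility constraint: a vertex of $y$ lies above a vertex $w\in V(x)$ exactly when $w$ lies on the path from $\ell$ down to the root of its tree, so $\vec{v}$ is an antichain with $\vec{v}_y\neq\emptyset$ if and only if $\vec{v}_x$ avoids that path, i.e. $\ell\in Roo$. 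Thus the whole analysis is organised by the position of $\ell$: if $\ell\in Lea$ then $\vec{v}_x$ is forced to be a $\prec$-cut of $x$, $\vec{v}_y$ must be empty and all of $y$ is dragged into $Lea$ grafted on $\ell$, producing the single term $x'_\prec\nwarrow y\otimes x''_\prec$; if $\ell\in Roo$ then $\vec{v}_x$ is an empty or a $\succ$-cut of $x$ while $\vec{v}_y$ ranges over all cuts of $y$ (empty, total, or proper), the total cut of $y$ corresponding to severing $y$ at its roots and placing it, standalone, to the right of the remaining $Lea$-part.

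Running through the resulting table and splitting each proper cut of $y$ by the position of $r_y$ gives the remaining summands: the empty cut of $y$ with a proper $\succ$-cut of $x$ gives $x'_\succ\otimes x''_\succ\nwarrow y$; severing $y$ gives $y\otimes x$ and $x'_\succ y\otimes x''_\succ$; and a proper cut of $y$, combined with either no cut or a $\succ$-cut of $x$, gives the four terms $y'_\prec\otimes x\nwarrow y''_\prec$, $y'_\succ\otimes x\nwarrow y''_\succ$, $x'_\succ y'_\prec\otimes x''_\succ\nwarrow y''_\prec$ and $x'_\succ y'_\succ\otimes x''_\succ\nwarrow y''_\succ$. (Here I expect the last summand of $\tdelta_\prec(x\nwarrow y)$ in the stated proposition to read $x'_\succ y'_\prec\otimes x''_\succ\nwarrow y''_\prec$, the printed $x''_\prec$ being a typo.) Sorting by the position of $r_y$ then reproduces the five terms of the first line and the three terms of the second line of (\ref{E4}). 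The main obstacle is the careful bookkeeping of these ``mixed'' cuts, and in particular verifying that after a $\succ$-cut of $x$ the vertex $\ell$ is still the rightmost leaf of $Roo_{\vec{v}_x}(x)=x''_\succ$, so that the bottom part $y''$ of $y$ is genuinely grafted on the right leaf of $x''_\succ$ and the term really has the form $x''_\succ\nwarrow y''$; this, together with checking that the severed copy of $y$ sits to the right (as dictated by the right-grafting convention) and so contributes a concatenation $x'_\succ y$, is where the argument must be made with care.
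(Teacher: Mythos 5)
Your proposal is correct and follows essentially the same route as the paper: both arguments reduce to classifying the admissible cuts of $xy$ and of $x\nwarrow y$ by their traces on $x$ and on $y$ (with the coupling in the grafted case governed by whether the rightmost leaf $r_x$ falls in $Lea$ or in $Roo$), and then sort the resulting terms according to the position of $r_y=r_{xy}=r_{x\nwarrow y}$. Your observation that the last summand of $\tdelta_\prec(x\nwarrow y)$ should read $x'_\succ y'_\prec\otimes x''_\succ\nwarrow y''_\prec$ (the printed $x''_\prec$ being a typo) is confirmed by the paper's own case analysis, which produces exactly that term.
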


\begin{proof} It is enough to prove these formulas if $x=\FF$, $y=\GG$ are non-empty planar forests. Let us first compute $\tdelta_\prec(\FF\GG)$.
For any admissible cut $\vec{v}\mmodels V(\FF\GG)$, let $\vec{v}'$ be the restriction of $\vec{v}$ to $\FF$ and $\vec{v}''$ the restriction of $\vec{v}$ to $\GG$. 
Then $\vec{v}'\models V(\FF)$ and $\vec{v}'' \models V(\GG)$. Moreover, $\vec{v}'$ and $\vec{v}''$ are not simultaneously total, and not simultaneously empty. \\

Let us first compute $\tdelta_\prec(\FF\GG)$. Let $\vec{v} \mmodels V(\FF\GG)$, such that $r_{\FF\GG}=r_\GG$ belongs to $Lea_{\vec{v}}\FF\GG$.
So $r_\GG \in Lea_{\vec{v}''}\GG$, so $\vec{v}''$ is not empty. There are five possibilities for $\vec{v}$:
\begin{itemize}
\item $\vec{v}'$ is empty and $\vec{v}''$ is total: this gives the term $\GG\otimes \FF$. 
\item $\vec{v}'$ is not empty and $\vec{v}''$ is total: then $\vec{v}' \mmodels V(\FF)$, and this gives the term $\FF'\GG\otimes \FF''$.
\item $\vec{v}'$ is empty and $\vec{v}''$ is not total: as $r_\GG \in Lea_{\vec{v}''}\GG$, this gives the term $\GG'_\prec \otimes \FF \GG''_\prec$.
\item $\vec{v}'$ is total and $\vec{v}''$ is not total: as $r_\GG \in Lea_{\vec{v}''}\GG$, this gives the term $\FF\GG'_\prec \otimes \GG''_\prec$.
\item $\vec{v}' \mmodels V(\FF)$ and $\vec{v}''$ is not total: as $r_\GG \in Lea_{\vec{v}''}\GG$, this gives the term $\FF'\GG'_\prec \otimes \FF''\GG''_\prec$.
\end{itemize}

We now compute $\tdelta_\succ(\FF\GG)$.  Let $\vec{v} \mmodels V(\FF\GG)$, such that $r_{\FF\GG}=r_\GG$ belongs to $Roo_{\vec{v}}\FF\GG$.
So $r_\GG \in Roo_{\vec{v}''}\GG$, so $\vec{v}''$ is not total. There are five possibilities for $\vec{v}$:
 \begin{itemize}
\item $\vec{v}'$ is total and $\vec{v}''$ is empty: this gives the term $\FF\otimes \GG$. 
\item $\vec{v}'$ is not total and $\vec{v}''$ is empty: then $\vec{v}' \mmodels V(\FF)$, and this gives the term $\FF'\otimes \FF''\GG$.
\item $\vec{v}'$ is total and $\vec{v}''$ is not empty: as $r_\GG \in Roo_{\vec{v}''}\GG$, this gives the term $\FF\GG'_\succ\otimes \GG''_\succ$.
\item $\vec{v}'$ is empty and $\vec{v}''$ is not total: as $r_\GG \in Roo_{\vec{v}''}\GG$, this gives the term $\GG'_\succ \otimes \FF\GG''_\succ$.
\item $\vec{v}' \mmodels V(\FF)$ and $\vec{v}''$ is not total: as $r_\GG \in Roo_{\vec{v}''}\GG$, this gives the term $\FF'\GG'_\succ \otimes \FF''\GG''_\succ$.
\end{itemize}

For any admissible cut $\vec{v}\mmodels V(\FF\nwarrow \GG)$, let $\vec{v}'$ be the restriction of $\vec{v}$ to $\FF$ and let $\vec{v}''$ be the unique admissible cut
of $\GG$ such that $Lea_{\vec{v}''}\GG$ is the subforest of $Lea_{\vec{v}} \FF\nwarrow \GG$ formed by the vertices that belong to $V(\GG)$. 
Moreover, if $\vec{v}'$ is not empty, as $\vec{v}$ is admissible, $r_\FF \in Lea_{\vec{v}'}\FF$, if, and only if $\vec{v}''$ is total. 
Consequently, as $\vec{v}$ is not total, $\vec{v}'$ is not total.\\

We compute $\tdelta_\prec(\FF\nwarrow \GG)$.  Let $\vec{v} \mmodels V(\FF\nwarrow \GG)$, such that $r_{\FF\nwarrow \GG}=r_\GG$ belongs to 
$Lea_{\vec{v}}\FF\nwarrow \GG$. As $r_\GG \in Lea_{\vec{v}''}\GG$, $\vec{v}''$ is not empty. There are four possibilities for $\vec{v}$:
\begin{itemize}
\item $\vec{v}'$ is empty and $\vec{v}''$ is total: this gives the term $\GG \otimes \FF$.
\item $\vec{v}'$ is empty and $\vec{v}''$ is not total: then $\vec{v}'' \mmodels V(\GG)$ and $r_\GG \in Lea_{\vec{v}''}\GG$, 
so this gives the term $\GG'_\prec \otimes \FF \nwarrow \GG''_\prec$.
\item $\vec{v}'$ is not empty and $\vec{v}''$ is total: we obtain two subcases:
\begin{itemize}
\item $Lea_{\vec{v}'}\FF$ contains $r_\FF$: this gives the term $\FF'_\prec \nwarrow \GG \otimes \FF''_\prec$.
\item $Roo_{\vec{v}'}\FF$ contains $r_\FF$: this gives the term $\FF'_\succ \GG \otimes \FF''_\succ$.
\end{itemize}
\item $\vec{v}'$ is not empty and $\vec{v}''$ is not total: then $r_\FF$ does not belong to $Lea_{\vec{v}'}\FF$, $r_\GG$ belongs to $Lea_{\vec{v}''}\GG$,
and this gives the term $\FF'_\succ \GG'_\prec \otimes \FF''_\succ \nwarrow \GG''_\prec$.
\end{itemize}

Finally, we compute $\tdelta_\succ(\FF\nwarrow \GG)$.  Let $\vec{v} \mmodels V(\FF\nwarrow \GG)$, such that $r_{\FF\nwarrow \GG}=r_\GG$ belongs to
$Roo_{\vec{v}}\FF\nwarrow \GG$. As $r_\GG \in Roo_{\vec{v}''}\GG$, $\vec{v}''$ is not total. So $\vec{v}'$ does not contain $r_\FF$. 
There are three possibilities for $\vec{v}$:
\begin{itemize}
\item $\vec{v}'$ is empty: then $\vec{v}'' \mmodels V(\GG)$ and $Roo_{\vec{v}''} \GG$ contains $r_\GG$, and this gives the term
$\GG'_\succ \otimes \FF\nwarrow \GG''_\succ$.
\item $\vec{v}'$ is not empty and $\vec{v}''$ is empty: then $r_\FF \in Roo_{\vec{v}'}\FF$ and we obtain the term $\FF'_\succ \otimes \FF''_\succ \nwarrow \GG$.
\item $\vec{v}'$ is not empty and $\vec{v}''$ is not empty:  then $r_\FF \in Roo_{\vec{v}'}\FF$, $r_\GG \in Roo_{\vec{v}''}\GG$ and we obtain the term
 $\FF'_\succ \GG'_\succ \otimes \FF''_\succ \nwarrow \GG'_\succ$.
\end{itemize} \end{proof}

This suggests:

\begin{defi} A $Dup$-$Dend$ bialgebra is a family $(A,.,\nwarrow,\tdelta_\prec,\tdelta_\succ)$, where $A$ is a vector space,
$.,\nwarrow:A\otimes A\longrightarrow A$ and $\tdelta_\prec,\tdelta_\succ:A \longrightarrow A \otimes A$, with the following properties:
\begin{enumerate}
\item $(A,.,\nwarrow)$ is a duplicial algebra (axioms \ref{E1}).
\item $(A,\tdelta_\prec,\tdelta_\succ)$ is a dendriform coalgebra (axioms \ref{E2}).
\item The compatibilities of proposition \ref{20} are satisfied (axioms \ref{E3} and \ref{E4}).
\end{enumerate} \end{defi}

\subsection{A rigidity theorem}

\begin{theo}
Let $A$ be a $Dup$-$Dend$ bialgebra. We assume that $A$ is graded and connected, that is to say $A_0=(0)$.
Let $(p_\D)_{d\in \D}$ be a basis of $Prim_{tot}(A)$ formed by homogeneous elements, indexed by a graded set $\D$. There exists a unique isomorphism
of graded $Dup$-$Dend$ bialgebras:
$$\phi:\left\{\begin{array}{rcl}
(\h_p^\D)_+&\longrightarrow&A\\
\tdun{$d$},\:d\in \D&\longrightarrow&p_d.
\end{array}\right.$$
\end{theo}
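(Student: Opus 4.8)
The plan is to build $\phi$ and its inverse by exploiting the freeness of $(\h_p^\D)_+$ as a duplicial algebra (the proposition preceding the definition of $Dup$-$Dend$ bialgebras), and then to check that the resulting duplicial morphism automatically respects the two coproducts. Since $(\h_p^\D)_+$ is the \emph{free} duplicial algebra on the $\tdun{d}$'s, the assignment $\tdun{d}\mapsto p_d$ extends \emph{uniquely} to a morphism of duplicial algebras $\phi:(\h_p^\D)_+\longrightarrow A$; this gives existence and uniqueness of $\phi$ at the level of algebra structure for free, and uniqueness as a $Dup$-$Dend$ morphism follows a fortiori. So the real content is twofold: first, that this $\phi$ intertwines $\tdelta_\prec,\tdelta_\succ$ with the corresponding cooperations of $A$; second, that $\phi$ is bijective.

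\emph{Compatibility with the coproducts.} I would prove $(\phi\otimes\phi)\circ\tdelta_\prec=\tdelta_\prec\circ\phi$ and similarly for $\tdelta_\succ$ by induction on the degree, using the generation of $(\h_p^\D)_+$ by the operations $.$ and $\nwarrow$ from the generators $\tdun{d}$. On generators both sides vanish, since each $\tdun{d}$ lies in $Prim_{tot}$ and the $p_d$ are chosen in $Prim_{tot}(A)$, so $\tdelta_\prec(p_d)=\tdelta_\succ(p_d)=0$. For the inductive step one takes an element written as $x\cdot y$ or $x\nwarrow y$ with $x,y$ of lower degree, applies the explicit formulas (\ref{E3}) and (\ref{E4}) on the $\h_p^\D$ side, applies the same formulas in $A$ (these are exactly the axioms (\ref{E3}),(\ref{E4}) of a $Dup$-$Dend$ bialgebra), and matches the terms using that $\phi$ is already known to be a duplicial morphism and the induction hypothesis on the lower-degree Sweedler components. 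The bookkeeping is routine but must be done term by term for all four identities.

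\emph{Bijectivity.} Here I would pass to dimensions. The map $\phi$ is graded of degree $0$, so it suffices to show it is an isomorphism in each degree, and for that it is enough that $\dim A(n)=\dim(\h_p^\D)_+(n)$ for all $n$ together with, say, injectivity. Counting the generators by degree, $f_\D(x)=\sum_{n\geq 1}|\D(n)|x^n$ is determined by the dimensions of $Prim_{tot}(A)$; by Proposition \ref{19}, $Prim_{tot}((\h_p^\D)_+)=Vect(\tdun{d},\,d\in\D)$ has exactly the same generating series, so both algebras have the same $f_\D$. The formula $f_{\h_p^\D}(x)=\bigl(1-\sqrt{1-4f_\D(x)}\bigr)/(2f_\D(x))$ from the first section then shows the two Poincar\'e--Hilbert series agree provided one knows the same relation holds for $A$. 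The crucial input is therefore that a graded connected duplicial algebra whose space of total primitives has series $f_\D$ satisfies this same functional equation for its dimension series; this is where I expect the real work, and it follows from the fact that $\phi$ is surjective (the $p_d$ generate $A$ as a duplicial algebra, since anything not in the duplicial subalgebra they generate would contribute extra total primitives, contradicting the choice of basis) combined with the free-algebra dimension count for $(\h_p^\D)_+$.

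\emph{Main obstacle.} The hard part will be establishing surjectivity of $\phi$, equivalently that $Prim_{tot}(A)$ generates all of $A$ under $.$ and $\nwarrow$; the dendriform-coalgebra axioms must be used to show that any element of $A$ of positive degree which is not a total primitive can be reconstructed from its images under $\tdelta_\prec,\tdelta_\succ$ (which land in lower degrees), yielding a decomposition as products and $\nwarrow$-grafts of lower-degree elements. Once surjectivity is in hand, equality of graded dimensions forces $\phi$ to be an isomorphism degree by degree, and the compatibility with $\tdelta_\prec,\tdelta_\succ$ proved above makes it an isomorphism of $Dup$-$Dend$ bialgebras.
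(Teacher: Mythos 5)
Your construction of $\phi$ from the freeness of $(\h_p^\D)_+$ as a duplicial algebra, the a fortiori uniqueness, and the inductive verification that $\phi$ commutes with $\tdelta_\prec$ and $\tdelta_\succ$ (every forest with at least two vertices is a product or a $\nwarrow$-graft of smaller ones, and the compatibilities (\ref{E3})--(\ref{E4}) hold on both sides) are all sound, and match the skeleton of the paper's argument. The genuine gap is surjectivity, which you correctly identify as the main obstacle but then dispose of with the sentence that anything outside the duplicial subalgebra $B$ generated by the $p_d$ ``would contribute extra total primitives.'' That is not an argument: it is a restatement of the rigidity property to be proven. The obvious attempt --- take $x\notin B$ homogeneous of minimal degree --- only shows that $\tdelta_\prec(x)$ and $\tdelta_\succ(x)$ lie in $B\otimes B$, which forces neither $x\in B$ nor $x\in Prim_{tot}(A)$. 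The analogous statement for ordinary graded connected bialgebras is false (they need not be generated by their primitives), so any proof must exploit the specific axioms (\ref{E3})--(\ref{E4}), not merely the fact that coproducts drop degree. The paper's proof of generation is where the real work lies: dualizing Proposition \ref{19}, $A$ embeds as a \emph{dendriform coalgebra} $\iota:A\longrightarrow (\h_p^{\D'})_+$ into a cofree object, normalized so that $\iota(p_d)=\tdun{$d$}$; one writes $\iota(x)$ as a duplicial expression $P$ in the generators, puts $y=P(p_d)\in B$, and uses the fact that iterated coproducts of such expressions are given by universal formulas (valid in any $Dup$-$Dend$ bialgebra once the generators are total primitives) to get $\Omega(x-y)=0$ for every iterated coproduct $\Omega$ of length $deg_p(x)-1$; induction on the filtration by $deg_p$ then gives $x\in B$. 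Nothing in your proposal plays the role of this coalgebra embedding, and without it your ``reconstruction from $\tdelta_\prec,\tdelta_\succ$'' remains unsupported.

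Your route to bijectivity via Poincar\'e--Hilbert series is moreover circular. Surjectivity of $\phi$ only yields $\dim A(n)\leq \dim (\h_p^\D)_+(n)$, and the ``crucial input'' you name --- that a graded connected $Dup$-$Dend$ bialgebra whose total primitives have series $f_\D$ has dimension series $\left(1-\sqrt{1-4f_\D}\right)/(2f_\D)$ --- is equivalent to the freeness statement being proven, so it cannot be fed into its own proof. The series argument is also unnecessary, because injectivity has a direct proof, which is the one the paper gives: if $Ker(\phi)\neq (0)$, choose $x\in Ker(\phi)$ nonzero homogeneous of minimal degree; since $\phi$ is a morphism of dendriform coalgebras and $\phi\otimes\phi$ is injective on the bidegrees occurring in $\tdelta_\prec(x)$ and $\tdelta_\succ(x)$ (both tensor factors have strictly lower degree), $x$ lies in $Prim_{tot}\left((\h_p^\D)_+\right)=Vect(\tdun{$d$},\:d\in \D)$ by Proposition \ref{19}; but $\phi$ is injective there because the $p_d$ form a basis, a contradiction. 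Injectivity plus surjectivity then finishes the proof with no dimension count at all.
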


\begin{proof} The graded dual $A^*$ of $A$ is a graded dendriform algebra, so is the quotient of a free dendriform algebra. By proposition \ref{19}, 
we can find a graded set $\D'$, such that there exists a epimorphism of dendriform algebras $\pi:(\h_p^{\D'})_+^*\longrightarrow A^*$. 
Dually, we obtain a monomorphism of dendriform coalgebras $\iota':A\longrightarrow (\h_p^{\D'})_+$. 
For all $d\in \D$, $\iota'(p_d) \in Prim_{tot}((\h_p^{\D'})_+)$, so is a linear span of $\tdun{$d'$}$, $d'\in \D'$. Up to an automorphism of $(\h_p^{\D'})_+$, 
we can assume that $\D \subseteq \D'$ and that $\iota'(p_d)=\tdun{$d$}$ for all $d\in \D$. Composing $\iota'$ with the canonical epimorphism 
from $(\h_p^{\D'})_+$ to $(\h_p^\D)_+$ obtained by deleting the forests with vertices with decorations which are not in $\D$, 
we obtain a morphism of dendriform coalgebras $\iota:A\longrightarrow (\h_p^\D)_+$, sendind $\iota(p_d)$ to $\tdun{$d$}$ for all $d\in \D$.
If $\iota$ is not injective, let us consider $x \in Ker(\iota)$, non-zero, of minimal degree. As $\iota$ is a morphism of dendriform coalgebras, $x\in Prim_{tot}(A)$:
absurd, $\iota$ is clearly injective on $Prim_{tot}(A)$. So $\iota$ is injective. \\

Let $x \in A$. We denote by $IC_n$ the set of $n$-iterated coproducts of $A$:
\begin{eqnarray*}
IC_0&=&\{Id\},\\
IC_n&=&\left\{\begin{array}{c}
Id^{\otimes i} \otimes \tdelta_\prec \otimes Id^{\otimes (n-i-1)}\circ \Theta,\:
Id^{\otimes i-1} \otimes \tdelta_\succ \otimes Id^{\otimes (n-i)}\circ \Theta \:\mid\\
 \: \Theta \in IC_{n-1},\:1\leq i \leq n\end{array}\right\}.
\end{eqnarray*}
The elements of $IC_n$ are homogeneous maps from $A$ to $A^{\otimes (n+1)}$. As a consequence, if $x \in A_n$, $k\geq n$ and $\Omega \in IC_k$, 
then $\Theta(x) \in \left(A^{\otimes (k+1)}\right)_n=(0)$. So for any $x \in A$, there exists a greatest integer $k$ such that there exists $\Omega \in IC_k$,
$\Omega(x) \neq 0$. This $k$ will be denoted by $deg_p(x)$.\\

Let us prove that $A$ is generated by $Prim_{tot}(A)$. Let $B$ be the $\P_\nwarrow$-subalgebra of $A$ generated by $Prim_{tot}(A)$.
Let $x \in A$, we prove that $x \in B$ by induction on $deg_p(x)$. If $deg_p(x) \leq 1$, then $x \in prim_{tot}(A)$ and the result is obvious.
Let us assume that any $y\in A$, such that $deg_p(y)<k$, is in $B$, and let us take $x \in A$, such that $deg_p(x)=k$.
Then $\iota(x)$ is an element of $(\h_p^\D)_+$, so can be written as an expression in $\tdun{$d$}$, 
using the products $.$ and $\nwarrow$ of $(\h_p^\D)_+$: we put $\iota(x)=P(\tdun{$d$},\:d\in \D)$. Let us consider $y=P(p_d,\:d\in \D)$.
As $deg_p(x)=k$, for any $\Omega \in IC_{k-1}$, $\Omega(x) \in Prim_{tot}(A)^{\otimes k}$, as it is cancelled by
$Id^{\otimes i} \otimes \tdelta_\prec \otimes Id^{\otimes (k-i-1)}$ and $Id^{\otimes i} \otimes \tdelta_\succ \otimes Id^{\otimes (k-i-1)}$ for any $i$.
We put:
$$\Omega(x)=\sum_{d_1,\ldots,d_k \in \D} a_{d_1,\ldots,d_k} p_{d_1}\otimes \ldots \otimes p_{d_k}.$$
As $\iota$ is a morphism of coalgebras:
$$\Omega(\iota(x))=(\iota^{\otimes k})\circ \Omega(x)=\sum_{d_1,\ldots,d_k \in \D} a_{d_1,\ldots,d_k} \tdun{$d_1$}\otimes \ldots \otimes \tdun{$d_k$}\:.$$
As the compatibilities between the products and the coproducts are the same in $A$ and $(\h_p^\D)_+$, we deduce that:
$$(\iota^{\otimes k})\circ \Omega(y)=\Omega(\iota(y))=\sum_{d_1,\ldots,d_k \in \D} a_{d_1,\ldots,d_k} \tdun{$d_1$}\otimes \ldots \otimes 
\tdun{$d_k$}=\Omega(\iota(x)).$$
The injectivity of $\iota$ implies that $\Omega(x-y)=0$. So the induction hypothesis can be applied on $x-y$, which belongs to $B$. 
By definition, $y \in B$, so $x \in B$.\\

As a consequence, the unique  morphism of duplicial algebras $\phi:(\h_p^\D)_+\longrightarrow A$,
sending $\tdun{$d$}$ to $p_d$ for all $d\in \D$, is surjective.  Moreover, it is homogeneous of degree $0$, as $\tdun{$d$}$ and $p_d$
are homogeneous of the same degree for all $d \in \D$. As it sends $\tdun{$d$}$ to an element of $Prim_{tot}(A)$ for all $d$, 
$\phi$ is a morphism of $Dup$-$Dend$ bialgebras. If $\phi$ is not injective, let us take $x\in Ker(\phi)$, non-zero, of minimal degree. 
As $\phi$ is a morphism of dendriform coalgebras, $x\in Prim_{tot}((\h_p^\D)_+)=Vect(\tdun{$d$},\:d\in\D)$: absurd, by definition the restriction of $\phi$ 
on $Vect(\tdun{$d$},\:d\in\D)$ is injective. So $\phi$ is an isomorphism.  \end{proof}

\subsection{Application to ordered forests}

We define the following product $\nwarrow$ on $(\h_o)_+$ for two non-empty ordered forests $\FF$ and $\GG$ in the following way:
$\FF \nwarrow \GG$ is the ordered forests obtained by grafting $\GG$ on the greatest vertex of $\FF$, the vertices of $\FF$ being smaller than the vertices of $\GG$
in the grafting. For example:
$$\tddeux{$1$}{$2$}\nwarrow \tdun{$1$}\tdun{$2$}=\tdquatrequatre{$1$}{$2$}{$4$}{$3$},\hspace{.5cm}
\tddeux{$2$}{$1$}\nwarrow \tdun{$1$}\tdun{$2$}=\tdquatreun{$2$}{$4$}{$3$}{$1$}.$$

Let $\FF,\GG,\HH$ be three non-empty ordered forests. As the greatest vertex of $\FF\GG$ is also the greatest vertex of $\GG$, 
$(\FF\GG)\nwarrow \HH=\FF(\GG \nwarrow \HH)$.
As the greatest vertex of $\FF\nwarrow \GG$ is the greatest vertex of $\GG$, $(\FF\nwarrow \GG)\nwarrow \HH=\FF\nwarrow (\GG \nwarrow \HH)$.
So $((\h_o)_+,.,\nwarrow)$ is a duplicial algebra. \\

For any ordered forest $\FF$, we denote by $g_\FF$ the greatest vertex of $\FF$. We then put:
$$\tdelta_\prec(\FF)=\sum_{\substack{\vec{v} \mmodels V(\FF)\\ g_\FF \in V(Lea_{\vec{v}}\FF)}} Lea_{\vec{v}}\FF \otimes Roo_{\vec{v}} \FF,\hspace{.5cm}
\tdelta_\succ(\FF)=\sum_{\substack{\vec{v} \mmodels V(\FF)\\ g_\FF \in V(Roo_{\vec{v}}\FF)}} Lea_{\vec{v}}\FF \otimes Roo_{\vec{v}} \FF.$$
For example:
$$\Delta_\prec\left(\tdquatredeux{2}{3}{4}{1}\right)=\tddeux{2}{1} \otimes \tddeux{1}{2}+\tddeux{3}{1}\tdun{2} \otimes \tdun{1},\hspace{.5cm}
\Delta_\succ\left(\tdquatredeux{2}{3}{4}{1}\right)=\tdun{1} \otimes \tdtroisdeux{2}{3}{1}+\tdun{1} \otimes \tdtroisun{1}{3}{2}+\tdun{1}\tdun{2} \otimes \tddeux{1}{2}.$$

Then $((\h_o)_+,\tdelta_\prec,\tdelta_\succ)$ is a dendriform coalgebra: indeed, if $\FF$ is a non-empty ordered forest, we put:
$$(\tdelta \otimes Id) \circ \tdelta(\FF)=(Id \otimes \tdelta)\circ \tdelta(\FF)=\sum \FF^{(1)} \otimes \FF^{(2)} \otimes \FF^{(3)},$$
where the $\FF^{(1)}$, $\FF^{(2)}$ and $\FF^{(3)}$ are subforests of $\FF$. Then:
$$\left\{\begin{array}{rcccl}
(\tdelta_\prec\otimes Id)\circ \tdelta_\prec(\FF)&=&(Id \otimes \tdelta)\circ \tdelta_\prec(\FF)
&=&\displaystyle \sum_{g_\FF \in \FF^{(1)}} \FF^{(1)} \otimes \FF^{(2)}\otimes \FF^{(3)},\\
(\tdelta_\succ\otimes Id)\circ \tdelta_\prec(\FF)&=&(Id \otimes \tdelta_\prec)\circ \tdelta_\succ(\FF)
&=&\displaystyle \sum_{g_\FF \in \FF^{(2)}} \FF^{(1)} \otimes \FF^{(2)}\otimes \FF^{(3)},\\
(\tdelta\otimes Id)\circ \tdelta_\succ(\FF)&=&(Id \otimes \tdelta_\succ)\circ \tdelta_\succ(\FF)
&=&\displaystyle \sum_{g_\FF \in \FF^{(3)}} \FF^{(1)} \otimes \FF^{(2)}\otimes \FF^{(3)}.
\end{array}\right.$$

Moreover, one can prove similarly with proposition \ref{20} that $(\h_o)_+$ is a $Dup$-$Dend$ bialgebra. 
Moreover, $(\h_{ho})_+$ is clearly a sub-$Dup$-$Dend$ bialgebra of $(\h_o)_+$. Hence:

\begin{theo}
\begin{enumerate}
\item There exists a graded set $\D_o$, such that $(\h_o)_+$ is isomorphic to $(\h_p^{\D_o})_+$ as graded $Dup$-$Dend$ bialgebras.
\item There exists a graded set $\D_{ho}$, such that $(\h_{ho})_+$ is isomorphic to $(\h_p^{\D_{ho}})_+$ as graded $Dup$-$Dend$ bialgebras.
\end{enumerate}
\end{theo}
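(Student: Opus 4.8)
The plan is to obtain both statements as immediate applications of the rigidity theorem of the previous subsection, applied successively to $A=(\h_o)_+$ and to $A=(\h_{ho})_+$. In each case the graded set will simply index a homogeneous basis of the corresponding space of total primitives.

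First I would check that $(\h_o)_+$ satisfies the hypotheses of the rigidity theorem. The discussion preceding the statement already endows $(\h_o)_+$ with the products $.$ and $\nwarrow$ and the coproducts $\tdelta_\prec,\tdelta_\succ$, and verifies the duplicial algebra axioms, the dendriform coalgebra axioms, and the compatibilities (\ref{E3}) and (\ref{E4}) (proved exactly as in proposition \ref{20}); thus $(\h_o)_+$ is a $Dup$-$Dend$ bialgebra. Since $\h_o$ is a graded connected Hopf algebra, its augmentation ideal $(\h_o)_+$ is graded with $\left((\h_o)_+\right)_0=(0)$, so $A=(\h_o)_+$ is graded and connected as required.

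Next I would produce the graded set $\D_o$. Because $\tdelta_\prec$ and $\tdelta_\succ$ are homogeneous of degree $0$, their kernels, and hence $Prim_{tot}\left((\h_o)_+\right)=Ker(\tdelta_\prec)\cap Ker(\tdelta_\succ)$, are graded subspaces of $(\h_o)_+$. I would therefore pick a basis $(p_d)_{d\in\D_o}$ of $Prim_{tot}\left((\h_o)_+\right)$ consisting of homogeneous elements, and make $\D_o$ a graded set by declaring the degree of $d$ to be the degree of $p_d$. The rigidity theorem then furnishes a unique isomorphism of graded $Dup$-$Dend$ bialgebras $\phi:(\h_p^{\D_o})_+\longrightarrow(\h_o)_+$ sending $\tdun{$d$}$ to $p_d$ for all $d\in\D_o$, proving the first assertion.

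For the second assertion I would repeat the argument verbatim with $(\h_{ho})_+$ in place of $(\h_o)_+$: it is noted above to be a sub-$Dup$-$Dend$ bialgebra of $(\h_o)_+$, it is graded and connected for the same reason, and $Prim_{tot}\left((\h_{ho})_+\right)$ is again a graded subspace, so a homogeneous basis indexed by a graded set $\D_{ho}$ exists and the rigidity theorem yields $(\h_{ho})_+\cong(\h_p^{\D_{ho}})_+$. I expect no genuine obstacle here, since the substantive work—constructing the two $Dup$-$Dend$ structures and establishing the rigidity theorem—is already complete; the only point demanding a word of care is that the space of total primitives is graded, which is precisely what allows one to index a homogeneous basis by a graded set and thereby match the grading of $\h_p^{\D}$.
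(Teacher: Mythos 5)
Your proposal is correct and follows exactly the paper's own route: the paper establishes the duplicial, dendriform-coalgebra and compatibility axioms for $(\h_o)_+$ (with $(\h_{ho})_+$ as a sub-$Dup$-$Dend$ bialgebra) and then deduces the theorem directly from the rigidity theorem, just as you do. Your added remark that $Prim_{tot}$ is a graded subspace (because $\tdelta_\prec$ and $\tdelta_\succ$ are homogeneous of degree $0$), so that a homogeneous basis indexed by a graded set exists, is a point the paper leaves implicit, and it is handled correctly.
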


The formal series of $\D_o$ and $\D_{ho}$ are given by:
$$f_{\D_o}(x)=\frac{f_{\h_o}(x)-1}{f_{\h_o}(x)^2},\hspace{.5cm} f_{\D_{ho}}(x)=\frac{f_{\h_{ho}}(x)-1}{f_{\h_{ho}}(x)^2}.$$
This gives the following examples:
$$\begin{array}{c|c|c|c|c|c|c|c|c}
k&1&2&3&4&5&6&7&8\\
\hline |\D_o(k)|&1&1&7&66&786&11\:278&189\:391&3\:648\:711\\
\hline|\D_{ho}(k)|&1&0&1&6&39&284&2\:305&20\:682
\end{array}$$
These are sequences A122705 and A122827 of \cite{Sloane}.\\

{\bf Remark.} As a consequence, $\h_o$ and $\h_{ho}$ are free and cofree. It is not difficult to show that $\h_o$ is freely generated by
indecomposable ordered forests, that is to say ordered forests $\FF$ that cannot be written as $\FF=\GG \HH$, with $\GG,\HH \neq 1$.
Similarly, $\h_{ho}$ is freely generated by indecomposable heap-ordered forests.

\subsection{Application to parking functions}

Let $\sigma$ be a parking function. We denote by $m_\sigma$ the maximal index $i$ such that $\sigma(i)$ is maximal.
In particular, if $\sigma \in \S_n$, $m_\sigma=\sigma^{-1}(n)$. For any parking function $\sigma$ of degree $n\geq 1$, we put:
$$\tdelta_\prec(\sigma)=\sum_{k=1}^{m_\sigma-1} \sigma^{(k)}_2 \otimes \sigma^{(k)}_1, \hspace{.5cm}
\tdelta_\succ(\sigma)=\sum_{k=m_\sigma}^{n-1} \sigma^{(k)}_2 \otimes \sigma^{(k)}_1.$$
For example:
$$\Delta_\prec((21332))=(1332) \otimes (1)+(221)\otimes (21)+(21) \otimes (213),\hspace{.5cm}\Delta_\succ((21332))=(1) \otimes (2133).$$
Note that $\tdelta_\prec+\tdelta_\succ=\tdelta^{op}$. Moreover, for any $\sigma$ of degree $n$, we denote:
$$(\tdelta \otimes Id) \circ \tdelta(\sigma)=(Id \otimes \tdelta) \circ \tdelta(x)
=\sum_{1\leq i<j \leq n-1} \sigma^{(i,j)}_1 \otimes \sigma^{(i,j)}_2 \otimes \sigma^{(i,j)}_3,$$
where $\sigma^{(i,j)}_1 \otimes \sigma^{(i,j)}_2 \otimes \sigma^{(i,j)}_3$ is the unique term of $(\tdelta \otimes Id) \circ \tdelta(\sigma)$
with $\sigma^{(i,j)}_1$ a parking function of degree $i$,  $\sigma^{(i,j)}_2$ a parking function of degree $j-i$, and
 $\sigma^{(i,j)}_3$ a parking function of degree $n-j$. Then, we have the equalities:
$$\left\{\begin{array}{rcccl}
(\tdelta_\prec\otimes Id)\circ \tdelta_\prec(\sigma)&=&(Id \otimes \tdelta)\circ \tdelta_\prec(\sigma)
&=&\displaystyle \sum_{j< m_\sigma} \sigma^{(i,j)}_3 \otimes \sigma^{(i,j)}_2 \otimes \sigma^{(i,j)}_1,\\
(\tdelta_\succ\otimes Id)\circ \tdelta_\prec(\sigma)&=&(Id \otimes \tdelta_\prec)\circ \tdelta_\succ(\sigma)
&=&\displaystyle \sum_{i<m_\sigma \leq j} \sigma^{(i,j)}_3 \otimes \sigma^{(i,j)}_2 \otimes \sigma^{(i,j)}_1,\\
(\tdelta\otimes Id)\circ \tdelta_\succ(\sigma)&=&(Id \otimes \tdelta_\succ)\circ \tdelta_\succ(\sigma)
&=&\displaystyle \sum_{i \geq m_\sigma} \sigma^{(i,j)}_3 \otimes \sigma^{(i,j)}_2 \otimes \sigma^{(i,j)}_1.
\end{array}\right.$$
So $\PQSym^{cop}_+$ is a dendriform coalgebra. Thinking of $\sigma.\tau$ as a sum of shufflings of two words, it is easy to see that 
$\PQSym^{cop}_+$ is a codendriform bialgebra. \\

Let $\sigma,\tau$ be two parking functions of respective degrees $k$ and $l$. We put:
$$\sigma \nwarrow \tau=\sum_{\substack{\zeta \in Sh(k,l)\\ \zeta(k+1) \geq \zeta(m_\sigma)}} (\sigma \otimes \tau) \circ \zeta^{-1}.$$
In other words, $\sigma \nwarrow \tau$ is the sum of the shufflings of the word $\sigma$ and the shifted word $\tau$ shifted by $k$,
such that the letters of $\tau$ are all after the greatest letter that is more on the right, of $\sigma$. In particular, if $m_\sigma=k$, 
then $\sigma \nwarrow \tau=\sigma \otimes \tau$. For example:
$$(21331)\nwarrow(12)=(2133167)+(2133617)+(2133671).$$
So, if $\sigma,\tau,\upsilon$ are three parking functions of respective degrees $k,l,m$, as $m_{\sigma.\tau}=deg(\sigma)+m_\tau$:
$$\left\{\begin{array}{rcccl}
(\sigma.\tau) \nwarrow \upsilon&=&\sigma.(\tau \nwarrow \upsilon)
&=&\displaystyle \sum_{\substack{\zeta \in Sh(k,l,m)\\ \zeta(k+l+1) \geq \zeta(m_\tau+k)}} (\sigma \otimes \tau \otimes \upsilon) \circ \zeta^{-1},\\
(\sigma \nwarrow \tau)\nwarrow \upsilon&=&\sigma \nwarrow (\tau \nwarrow \upsilon) &=&\displaystyle 
\sum_{\substack{\zeta \in Sh(k,l,m)\\ \zeta(k+l+1) \geq \zeta(m_\tau+k)\\ \zeta(k+1) \geq \zeta(m_\sigma)}} (\sigma \otimes \tau \otimes \upsilon) \circ \zeta^{-1}.
\end{array}\right.$$
So $\PQSym^{cop}_+$ is a duplicial algebra. It is not difficult to show that $\PQSym^{cop}_+$ is a $Dup$-$Dend$ bialgebra.
Moreover, $\FQSym^{cop}_+$ is a sub-$Dup$-$Dend$ bialgebra of $\PQSym^{cop}_+$. So:

\begin{theo}
\begin{enumerate}
\item There exists a graded set $\D_P$, such that $\PQSym_+$ is isomorphic to $(\h_p^{\D_P})_+$ as graded $Dup$-$Dend$ bialgebras.
\item There exists a graded set $\D_F$, such that $\FQSym_+$ is isomorphic to $(\h_p^{\D_F})_+$ as graded $Dup$-$Dend$ bialgebras.
\end{enumerate}
\end{theo}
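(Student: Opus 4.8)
The plan is to read off both assertions from the rigidity theorem, applied to $A = \PQSym^{cop}_+$ and then to $A = \FQSym^{cop}_+$ with the $Dup$-$Dend$ structures built in the discussion above. All the genuine content has already been assembled in the preceding paragraphs; what remains is to check that the hypotheses of the rigidity theorem are met and to name the resulting primitive generators.

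First I would confirm the hypotheses. Both $\PQSym$ and $\FQSym$ are graded, the homogeneous component of degree $n$ being spanned by the parking functions (resp. the permutations) of size $n$, and passing to the augmentation ideal removes the degree-$0$ part, so that $A_0 = (0)$: in each case the bialgebra $A$ is graded and connected. By the paragraphs preceding the statement, $(\PQSym^{cop}_+, ., \nwarrow, \tdelta_\prec, \tdelta_\succ)$ is a $Dup$-$Dend$ bialgebra and $\FQSym^{cop}_+$ is a sub-$Dup$-$Dend$ bialgebra of it, so each is a graded connected $Dup$-$Dend$ bialgebra. Now choose a homogeneous basis $(p_d)_{d \in \D_P}$ of $Prim_{tot}(\PQSym^{cop}_+)$ indexed by a graded set $\D_P$, and a homogeneous basis $(p_d)_{d \in \D_F}$ of $Prim_{tot}(\FQSym^{cop}_+)$ indexed by $\D_F$. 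The rigidity theorem then produces unique isomorphisms of graded $Dup$-$Dend$ bialgebras $(\h_p^{\D_P})_+ \to \PQSym^{cop}_+$ and $(\h_p^{\D_F})_+ \to \FQSym^{cop}_+$ sending each $\tdun{$d$}$ to $p_d$, which are precisely the two claimed isomorphisms (the $Dup$-$Dend$ structure on the parking-function side being the one carried by $\PQSym^{cop}_+$).

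The proof therefore reduces entirely to the $Dup$-$Dend$ verification already cited, and I expect that verification, rather than the present theorem, to be the real obstacle. Concretely, the delicate point is the compatibility (\ref{E4}) between $\nwarrow$ and $\tdelta_\prec,\tdelta_\succ$ on parking functions: as in proposition \ref{20}, one must run the five-term case analysis while simultaneously controlling the shuffle $\zeta$ and the index $m_\sigma$ of the rightmost maximal letter, and it is here that a missing term is easiest to introduce. Once axioms (\ref{E1})--(\ref{E4}) are in hand, the rigidity theorem applies mechanically and the theorem follows with no further computation.
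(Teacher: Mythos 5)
Your proposal is correct and follows exactly the paper's route: the paper likewise obtains this theorem by feeding the $Dup$-$Dend$ structures just constructed on $\PQSym^{cop}_+$ and its sub-bialgebra $\FQSym^{cop}_+$ into the rigidity theorem, with $\D_P$ and $\D_F$ indexing homogeneous bases of the respective spaces $Prim_{tot}$. Your closing remark is also apt: the paper leaves the verification of axioms (\ref{E3})--(\ref{E4}) for parking functions as ``not difficult to show,'' so that verification, not the deduction of the theorem, is where the substance lies.
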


As $\h_o$ and $\PQSym$ have the same formal series, $\D_o$ and $\D_P$ have the same formal series, so 
$(\h_p^{\D_o})_+$ and $(\h_p^{\D_P})_+$ are isomorphic. The same result holds for $(\h_p^{\D_{ho}})_+$ and $(\h_p^{\D_F})_+$.
As a conclusion:

\begin{cor}
The graded Hopf algebras $\h_o$, $\h_p^{\D_o}$ and $\PQSym$ are isomorphic.
The graded Hopf algebras $\h_{ho}$, $\h_p^{\D_{ho}}$ and $\FQSym$ are isomorphic.
\end{cor}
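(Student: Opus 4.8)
The plan is to deduce all four isomorphisms from the rigidity theorem by reducing them to a comparison of formal series. The two application theorems already give, as graded $Dup$-$Dend$ bialgebras and hence, after restoring the one-dimensional degree-zero components, as graded Hopf algebras, the isomorphisms $\h_o\cong\h_p^{\D_o}$, $\PQSym\cong\h_p^{\D_P}$, $\h_{ho}\cong\h_p^{\D_{ho}}$ and $\FQSym\cong\h_p^{\D_F}$. So it suffices to establish the two graded Hopf algebra isomorphisms $\h_p^{\D_o}\cong\h_p^{\D_P}$ and $\h_p^{\D_{ho}}\cong\h_p^{\D_F}$, and then to chain the identifications.

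First I would isolate the one fact that does the work: the graded Hopf algebra (indeed $Dup$-$Dend$) isomorphism type of $\h_p^\D$ depends only on the formal series $f_\D$, that is, only on the cardinalities $|\D(n)|$. This is essentially immediate from the combinatorial definitions. A degree-preserving bijection $\D\to\D'$, which exists as soon as $|\D(n)|=|\D'(n)|$ for all $n$, relabels the vertex decorations and so induces a degree-preserving bijection between decorated planar forests; since the product (concatenation), the coproduct (admissible cuts) and the operations $\nwarrow$, $\tdelta_\prec$, $\tdelta_\succ$ are all defined without reference to the particular labels, the resulting linear isomorphism transports every structure map. I expect this relabelling lemma to be the conceptual crux, as it is exactly what converts the rigidity theorem into a classification of the $\h_p^\D$ by their formal series, even though it is technically routine.

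Next I would match the relevant series. Since the number of parking functions of degree $n$ equals $(n+1)^{n-1}=|\F_o(n)|$, we have $f_{\PQSym}=f_{\h_o}$, and likewise $f_{\FQSym}=\sum_n n!\,x^n=f_{\h_{ho}}$. The isomorphisms recalled in the first paragraph force $f_{\h_p^{\D_o}}=f_{\h_o}=f_{\PQSym}=f_{\h_p^{\D_P}}$, and substituting into the inversion formula $f_\D(x)=(f_{\h_p^\D}(x)-1)/f_{\h_p^\D}(x)^2$ yields $f_{\D_o}=f_{\D_P}$; the identical computation with $n!$ in place of $(n+1)^{n-1}$ gives $f_{\D_{ho}}=f_{\D_F}$.

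Finally, applying the relabelling lemma to these two equalities of formal series produces $\h_p^{\D_o}\cong\h_p^{\D_P}$ and $\h_p^{\D_{ho}}\cong\h_p^{\D_F}$, and composing the chains $\h_o\cong\h_p^{\D_o}\cong\h_p^{\D_P}\cong\PQSym$ and $\h_{ho}\cong\h_p^{\D_{ho}}\cong\h_p^{\D_F}\cong\FQSym$ completes the argument. The only genuine inputs are thus the (already proved) rigidity theorem and the enumerative identity $|\F_o(n)|=(n+1)^{n-1}=\#\{\text{parking functions of degree }n\}$; everything else is formal-series bookkeeping, so I do not anticipate any further obstacle beyond the relabelling lemma of the second paragraph.
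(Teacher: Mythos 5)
Your proposal is correct and follows essentially the same route as the paper: invoke the two application theorems, use the enumerative identity $|\F_o(n)|=(n+1)^{n-1}$ (and $n!$ in the heap-ordered case) together with the inversion formula $f_\D(x)=(f_{\h_p^\D}(x)-1)/f_{\h_p^\D}(x)^2$ to get $f_{\D_o}=f_{\D_P}$ and $f_{\D_{ho}}=f_{\D_F}$, and conclude by chaining. The only difference is that you make explicit the relabelling lemma (that $\h_p^\D$ depends, up to graded isomorphism, only on $f_\D$), which the paper leaves implicit in the step ``$\D_o$ and $\D_P$ have the same formal series, so $(\h_p^{\D_o})_+$ and $(\h_p^{\D_P})_+$ are isomorphic.''
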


\subsection{Compatibilities with $\theta$}

\begin{prop}
The morphism $\Theta:\h_o\longrightarrow \PQSym^{cop}$ is a morphism of $Dup$-$Dend$ bialgebras.
\end{prop}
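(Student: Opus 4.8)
The plan is to check the four compatibilities defining a morphism of $Dup$-$Dend$ bialgebras, two of which come for free. Combining the proposition stating that $\Theta:\h_o^{cop}\longrightarrow\FQSym$ is a Hopf algebra morphism with the fact that $\FQSym$ is a Hopf subalgebra of $\PQSym$, the map $\Theta:\h_o\longrightarrow\PQSym^{cop}$ is a Hopf algebra morphism. In particular $\Theta(\FF\GG)=\Theta(\FF)\Theta(\GG)$, so $\Theta$ respects the associative products, and $\tdelta\circ\Theta=(\Theta\otimes\Theta)\circ\tdelta$, so $\Theta$ respects the total reduced coproduct $\tdelta=\tdelta_\prec+\tdelta_\succ$. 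Hence it suffices to prove the two identities $\Theta(\FF\nwarrow\GG)=\Theta(\FF)\nwarrow\Theta(\GG)$ and $\tdelta_\prec\circ\Theta=(\Theta\otimes\Theta)\circ\tdelta_\prec$; the identity for $\tdelta_\succ$ then follows by subtraction.

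For the product $\nwarrow$ I would work with two ordered forests $\FF,\GG$ of respective degrees $k$ and $l$, identifying $V(\FF)=\{1,\dots,k\}$ and $V(\GG)=\{k+1,\dots,k+l\}$, so that $g_\FF=k$ and the vertices of $\GG$ are the $l$ largest ones. In $\FF\nwarrow\GG$ the relation $\twoheadrightarrow$ is the union of the relations internal to $\FF$, those internal to $\GG$, and the cross relations $v\twoheadrightarrow w$ for $v\in V(\GG)$, $w\in V(\FF)$ with $g_\FF\twoheadrightarrow w$. Given $\rho\in\S_{k+l}$, write $\sigma$ for the standardized word of the letters of $\rho$ lying in $\{1,\dots,k\}$ and $\tau$ for that of the letters in $\{k+1,\dots,k+l\}$, and let $\zeta\in Sh(k,l)$ record the interleaving. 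I would then show that $\rho\in S_{\FF\nwarrow\GG}$ if and only if $\sigma\in S_\FF$, $\tau\in S_\GG$, and every letter of $\GG$ stands to the right of the value $g_\FF=k$ in $\rho$. The point is that, once $\sigma\in S_\FF$ is assumed, the cross relations are automatic: $g_\FF\twoheadrightarrow w$ already forces $g_\FF$ to the right of $w$, so placing all of $\GG$ to the right of $g_\FF$ places it to the right of every such $w$. This last condition is exactly $\zeta(k+1)\geq\zeta(m_\sigma)$, the defining constraint of $\sigma\nwarrow\tau$. Since $\rho$ determines $(\sigma,\tau,\zeta)$ uniquely, the map $\rho\mapsto(\sigma,\tau,\zeta)$ is a bijection and
\[
\Theta(\FF\nwarrow\GG)=\sum_{\rho\in S_{\FF\nwarrow\GG}}\rho
=\sum_{\sigma\in S_\FF,\ \tau\in S_\GG}\sigma\nwarrow\tau=\Theta(\FF)\nwarrow\Theta(\GG).
\]

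For $\tdelta_\prec$ I would revisit the bijection underlying the coalgebra morphism property. Fix $\FF\in\F_o(n)$ with $V(\FF)=\{1,\dots,n\}$, so $g_\FF=n$. For $\sigma\in S_\FF$ and a cut position $1\leq k\leq n-1$, the set $B$ of values occupying positions $k+1,\dots,n$ in $\sigma$ is upward closed for $\twoheadrightarrow$ (if $j\in B$ and $i\twoheadrightarrow j$, then $\sigma^{-1}(i)\geq\sigma^{-1}(j)>k$, so $i\in B$); hence $B=V(Lea_{\vec{v}}\FF)$ for a unique admissible cut $\vec{v}$, and the standardized words $\sigma_2^{(k)},\sigma_1^{(k)}$ lie in $S_{Lea_{\vec{v}}\FF}$ and $S_{Roo_{\vec{v}}\FF}$. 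This is the bijection realising $\tdelta(\Theta(\FF))=(\Theta\otimes\Theta)(\tdelta(\FF))$. The decisive observation is that $g_\FF=n$ belongs to $Lea_{\vec{v}}\FF$ if and only if the value $n$ lies in $B$, that is if and only if $m_\sigma=\sigma^{-1}(n)>k$. Thus, recalling that on $\PQSym^{cop}$ one has $\tdelta_\prec+\tdelta_\succ=\tdelta^{op}$, the bijection carries the terms with $k<m_\sigma$ (which define $\tdelta_\prec(\Theta(\FF))$) precisely onto the cuts with $g_\FF\in Lea_{\vec{v}}\FF$ (which define $(\Theta\otimes\Theta)(\tdelta_\prec(\FF))$), proving $\tdelta_\prec\circ\Theta=(\Theta\otimes\Theta)\circ\tdelta_\prec$.

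The main obstacle is the verification for $\nwarrow$: one must describe the relation $\twoheadrightarrow$ of the grafted forest precisely, check that the cross relations collapse to the single shuffle constraint, and confirm that $\rho\mapsto(\sigma,\tau,\zeta)$ is a genuine multiplicity-free bijection. Once the grafting is understood, the coproduct statement is only a refinement of the already known coalgebra morphism bijection, obtained by tracking the top vertex $g_\FF$; here it is essential that standardization preserves the position of the maximal letter, which is exactly what makes the marking $g_\FF\in Lea_{\vec{v}}\FF$ correspond to the condition $k<m_\sigma$.
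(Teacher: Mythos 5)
Your proof is correct and takes essentially the same route as the paper's: the same shuffle decomposition of $S_{\FF\nwarrow\GG}$ governed by the constraint $\zeta(k+1)\geq\zeta(\sigma^{-1}(k))$, and the same bijection between pairs $(\sigma,k)$ and admissible cuts, tracking whether the maximal vertex lies in $Lea_{\vec{v}}\FF$. The only cosmetic differences are that you deduce the $\tdelta_\succ$ identity by subtraction where the paper repeats the argument, and you reconstruct the cut-position bijection that the paper simply cites from its earlier work.
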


\begin{proof} Let $\FF$ and $\GG$ be two non-empty ordered forest, of respective degrees $k$ and $l$. We first show:
$$S_{\FF \nwarrow \GG}=\bigsqcup_{\sigma \in S_\FF,\:\tau \in S_\GG} \bigsqcup_{\substack{\zeta \in Sh(k,l)\\ \zeta(k+1)\geq
\zeta(\sigma^{-1}(k))}}\{(\sigma \otimes \tau) \circ \zeta^{-1}\}.$$
We denote by $S'$ the set in the second member of this equation. We first prove that $S_{\FF \nwarrow \GG}\subseteq S'$.
Let $\chi  \in S_{\FF \nwarrow \GG}$. There exists a unique $(\sigma,\tau,\zeta)\in \S_k \times \S_l\times Sh(k,l)$, such that 
$\chi=(\sigma \otimes \tau) \circ \zeta^{-1}$. Let us prove that $\sigma \in S_\FF$. If $i \twoheadrightarrow j$ in $\FF$,
then $i\twoheadrightarrow j$ in $\FF \nwarrow \GG$, so:
\begin{eqnarray*}
\chi^{-1}(i)&\geq& \chi^{-1}(j),\\
\zeta \circ (\sigma^{-1} \otimes \tau^{-1})(i) &\geq& \zeta \circ (\sigma^{-1} \otimes \tau^{-1})(j),\\
\zeta \circ \sigma^{-1}(i) &\geq&\zeta\circ \sigma^{-1}(j),\\
\sigma^{-1}(i)&\geq&\sigma^{-1}(j),
\end{eqnarray*}
as $\zeta$ is increasing on $\{1,\ldots,k\}$. So $\sigma \in S_\FF$. Similarly, $\tau \in S_\GG$. Moreover, 
the vertex $\tau(1)+k$ belongs to $\GG$ in $\FF \nwarrow \GG$, so $\tau(1)+k \twoheadrightarrow k$ in $\FF\nwarrow \GG$. As a consequence:
\begin{eqnarray*}
\chi^{-1}(\tau(1)+k)&\geq& \chi^{-1}(k),\\
\zeta \circ (\sigma^{-1} \otimes \tau^{-1})(\tau(1)+k) &\geq& \zeta \circ (\sigma^{-1} \otimes \tau^{-1})(k),\\
\zeta(k+1)&\geq&\zeta\circ \sigma^{-1}(k).
\end{eqnarray*}

Let us now prove that $S'\subseteq S_{\FF \nwarrow \GG}$.  Let $\sigma \in S_\FF$, $\tau \in S_\GG$ and $\zeta \in Sh(k,l)$, 
such that $\zeta(k+1)\geq \zeta(\sigma^{-1}(k))$. We put $\chi=(\sigma \otimes \tau) \circ \zeta^{-1}$. 
Let $i,j$ be two vertices of $\FF \nwarrow \GG$, such that $i\twoheadrightarrow j$. Three cases can occur:
\begin{itemize}
\item $i,j$ are vertices of $\FF$. Then $\sigma^{-1}(i)\geq \sigma^{-1}(j)$, so $(\sigma^{-1}\otimes \tau^{-1})(i) 
\geq (\sigma^{-1}\otimes \tau^{-1})(j) $, 
and finally $\sigma^{-1}(i)=\zeta \circ(\sigma^{-1}\otimes \tau^{-1})(i) \geq \zeta \circ (\sigma^{-1}\otimes \tau^{-1})(j)=\sigma^{-1}(j)$.
\item $i,j$ are vertices of $\GG$. The same proof holds.
\item $i$ is a vertex of $\GG$ and $j$ is a vertex of $\FF$. Then $i \twoheadrightarrow k$ in $\FF \nwarrow \GG$.
As $\FF \nwarrow \GG$ is a forest, necessarily $k \twoheadrightarrow j$ in $\FF$, so by the first point $\sigma^{-1}(k)\geq \sigma^{-1}(j)$.

Moreover,$i+1 \geq k+1$, so $\sigma^{-1}(i) \geq \zeta(k+1)$ as $\zeta$ is increasing on $\{k+1,\ldots,k+l\}$. Then:
$$\sigma^{-1}(i) \geq \zeta(k+1)\geq \zeta(\sigma^{-1}(k))=\sigma^{-1}(k) \geq \sigma^{-1}(j).$$
\end{itemize}

Finally, for any non-empty ordered forests $\FF$ and $\GG$ of respective degrees $k$ and $l$:
$$\Theta(\FF \nwarrow \GG)=\sum_{\sigma \in S_\FF,\:\tau \in S_\GG} \sum_{\substack{\zeta \in Sh(k,l)\\ \zeta(k+1)\geq \zeta(\sigma^{-1}(k))}}
(\sigma \otimes \tau) \circ \zeta^{-1}=\sum_{\sigma \in S_\FF,\:\tau \in S_\GG} \sigma \nwarrow \tau=\Theta(\FF) \nwarrow \Theta(\GG).$$

Let $\FF\in \F_o(n)$. We proved in \cite{FoissyUnt} that there exists a bijection:
$$\Upsilon: \left\{\begin{array}{rcl}
S_\FF\times \{1,\ldots,n-1\}&\longmapsto&\displaystyle \bigsqcup_{\vec{v} \mmodels V(\FF)} S_{Roo_{\vec{v}}\FF}
\times S_{Lea_{\vec{v}}\FF}\\
(\sigma,k)&\longmapsto&\left(\sigma_1^{(k)},\sigma_2^{(k)}\right),
\end{array}\right.$$
where the pair $\left(\sigma_1^{(k)},\sigma_2^{(k)}\right)$ belongs to the term of the union indexed by the unique admissible cut $\vec{v}$ 
such that the vertices of $Roo_{\vec{v}}\FF$ are $\{\sigma(1),\ldots,\sigma(k)\}$, and the vertices of $Lea_{\vec{v}}\FF$ are
$\{\sigma(k+1),\ldots,\sigma(n)\}$. So, if $(\sigma,k) \in S_\FF\times \{1,\ldots,n-1\}$, 
$k<\sigma^{-1}(n)$ if, and only if, $n$ is a vertex of $Lea_{\vec{v}}\FF$. So:
$$(\Theta \otimes \Theta) \circ \tdelta_\prec(\FF)=\sum_{\substack{\vec{v} \mmodels V(\FF)\\ n \in Lea_{\vec{v}}\FF}}
\sum_{\substack{\sigma \in S_{Lea_{\vec{v}}\FF}\\\tau \in S_{Roo_{\vec{v}}\FF}}} \sigma \otimes \tau
=\sum_{\sigma \in S_\FF} \sum_{k=1}^{\sigma^{-1}(n)-1} \sigma_2^{(k)} \otimes \sigma_1^{(k)}
=\sum_{\sigma \in S_\FF} \tdelta_\prec(\sigma)=\tdelta \circ \Theta(\FF).$$
The proof is similar to show that $(\Theta \otimes \Theta) \circ \tdelta_\succ=\tdelta_\succ \circ \Theta$. \end{proof}\\

{\bf Remark.} We also obtain that the $Dup$-$Dend$ bialgebras $\h_{ho}$ and $\FQSym$ are isomorphic.

\bibliographystyle{amsplain}
\bibliography{biblio}

\end{document}